\documentclass[11pt,leqno]{amsart}
\usepackage{amsmath,amsthm,amssymb,mathrsfs,stmaryrd,color, thmtools}
\usepackage[all]{xy}
\usepackage{url}
\usepackage{geometry}
\usepackage{setspace}
\usepackage[utf8]{inputenc}
\usepackage[T1]{fontenc}
\usepackage{braket}
\usepackage{graphicx}
\usepackage{mathtools}
\usepackage{relsize}
\usepackage[bbgreekl]{mathbbol}
\usepackage{amsfonts}
\usepackage{quiver} 
\usepackage{comment}
\DeclareSymbolFontAlphabet{\mathbb}{AMSb}
\DeclareSymbolFontAlphabet{\mathbbl}{bbold}

\usepackage{hyperref}
\hypersetup{
    colorlinks=true,
    citecolor=blue,
    linkcolor=blue,
    pagebackref=false,
    pdfpagelabels,
    hyperindex
}

\usepackage{enumitem}
\setlist[enumerate]{itemsep=2pt,parsep=2pt,before={\parskip=2pt}}

\usepackage{cleveref}

\makeatletter
\def\step{%
   \@ifnextchar[ \@step{\@noitemargtrue\@step[\@itemlabel]}}
\def\@step[#1]{\item[#1]\mbox{}\\\hspace*{\dimexpr-\labelwidth-\labelsep}}
\makeatother

\newcommand{\cosimp}[3]{\xymatrix@1{#1 \ar@<.4ex>[r] \ar@<-.4ex>[r] & {\ }#2 \ar@<0.8ex>[r] \ar[r] \ar@<-.8ex>[r] & {\ } #3 \ar@<1.2ex>[r] \ar@<.4ex>[r] \ar@<-.4ex>[r] \ar@<-1.2ex>[r] & \cdots }}

\newcommand{\adjunction}[4]{\xymatrix@1{#1{\ } \ar@<0.3ex>[r]^{ {\scriptstyle #2}} & {\ } #3 \ar@<0.3ex>[l]^{ {\scriptstyle #4}}}}

\DeclareMathOperator{\Q}{\mathbb{Q}}

\DeclareMathOperator{\F}{\mathbb{F}}
\DeclareMathOperator{\Z}{\mathbb{Z}}
\DeclareMathOperator{\GG}{\mathbf{G}}

\DeclareMathOperator{\C}{\mathbb{C}}

\newcommand{\x}{\times}

\newcommand{\spf}{\mathrm{Spf} }

\DeclareMathOperator{\SL}{\mathrm{SL}}
\DeclareMathOperator{\GL}{\mathrm{GL}}

\DeclareMathOperator{\Spf}{\mathrm{Spf}}

\DeclareMathOperator{\Hom}{\mathrm{Hom}}

\DeclareMathOperator{\Ext}{\mathrm{Ext}}

\newcommand{\ur}{\mathrm{ur}}

\DeclareMathOperator{\cX}{\mathcal{X}}

\DeclareMathOperator{\cM}{\mathcal{M}}
\DeclareMathOperator{\cO}{\mathcal{O}}

\DeclareMathOperator{\cW}{\mathcal{W}}
\DeclareMathOperator{\cP}{\mathcal{P}}

\DeclareMathOperator{\cR}{\mathcal{R}}

\DeclareMathOperator{\sX}{\mathscr{X}}

\DeclareMathOperator{\et}{\mbox{\scriptsize \'et}}
\DeclareMathOperator{\Gal}{Gal}

\DeclareMathOperator{\opt}{\scriptscriptstyle \mathrm{opt}}

\DeclareMathOperator{\Spec}{\mathrm{Spec}}
\DeclareMathOperator{\Spa}{\mathrm{Spa}}
\DeclareMathOperator{\Spd}{\mathrm{Spd}}

\DeclareMathOperator{\rig}{\mathrm{rig}}

\newcommand{\RZ}{\mathrm{RZ}}

\newcommand{\ad}{\mathrm{ad}}
\renewcommand{\H}{\mathrm{H}}
\newcommand{\der}{\mathrm{der}}
\newcommand{\PGL}{\mathrm{PGL}}
\newcommand{\cG}{\mathcal{G}}

\newtheorem{theorem}{Theorem}[section]
\newtheorem*{theorem*}{Theorem}
\newtheorem*{definition*}{Definition}
\newtheorem{proposition}[theorem]{Proposition}
\newtheorem{lemma}[theorem]{Lemma}
\newtheorem{corollary}[theorem]{Corollary}

\theoremstyle{definition}
\newtheorem{definition}[theorem]{Definition}

\newtheorem{remark}[theorem]{Remark}

\newtheorem{example}[theorem]{Example}

\newtheorem{convention}[theorem]{Convention}

\newtheorem{conjecture}[theorem]{Conjecture}
\crefname{assumption}{assumption}{assumptions}
\crefname{construction}{construction}{constructions}

\begin{document}

\title[]{On the Harris--Viehmann conjecture for Hodge--Newton reducible local Shimura data of abelian type}

\author[]{Sandra Nair, Xinyu Zhou}

\maketitle
\setcounter{tocdepth}{1}

\begin{abstract}
We address a new case of the Harris--Viehmann conjecture, which establishes a parabolic induction formula on the cohomology groups associated to non-basic local Shimura data. It follows that all supercuspidal representations on a Shimura variety are concentrated along the basic locus, making the conjecture relevant to the Langlands program. Historically, many cases of the Harris--Viehmann conjecture have been approached with the additional condition of Hodge--Newton reducibility on the underlying local Shimura datum. Building on previous work by Mantovan (EL/PEL case) and Hong (Hodge case), we extend the proof of the conjecture to unramified non-basic local Shimura data of abelian type under the assumption of Hodge--Newton reducibility. We leverage Shen’s construction of Rapoport--Zink spaces of abelian type at the hyperspecial level.
\end{abstract}

{\hypersetup{linkcolor=black}\tableofcontents}

\section{Introduction} \label{Sec: 1}

  \emph{Rapoport--Zink spaces} are local analogues of integral models of Shimura varieties. The first example of a Rapoport--Zink space was explicitly constructed in \cite{rapoport2016period}, serving as a formal moduli space of $p$-divisible groups with PEL structure. Its $\ell$-adic cohomology realizes the local Langlands correspondence and the local Jacquet-Langlands correspondence, c.f. \cite{harrisgeometry} for $\GL_n$ case. This indicates a more general phenomenon, where the $\ell$-adic cohomology of appropriate local analogues of Shimura varieties can materialize the local Langlands correspondence and the local Jacquet-Langlands correspondence. Following this, Rapoport and Viehmann conjectured the existence of \emph{local Shimura varieties} in \cite{RapoportViehmann}, of which (the generic fibers of) Rapoport--Zink spaces are a special case. They start with a \emph{local Shimura datum}, and attach a tower of rigid analytic spaces to it. 

The \emph{Harris--Viehmann conjecture} concerns the cohomology of local Shimura varieties with underlying non-basic local Shimura data. Originally conjectured by Harris in \cite{harris2001local}, later modified by Viehmann in \cite{RapoportViehmann}, and then further amended by Bertoloni Meli in \cite{meli2022cohomology}, the conjecture gives a parabolic inductive formula for the $\ell$-adic cohomology of such local Shimura varieties. In particular, it asserts that all the supercuspidal representations are concentrated along the basic locus. 

\subsection{Main results}
To state our main result, we fix some notations first. We adopt the following conventions throughout the paper: 
\begin{itemize}
    \item Let $p$ be a fixed odd prime. Let $\ell$ be a prime distinct from $p$.
    \item Fix algebraic closures $\overline{\F}_p$ and $\overline{\Q}_p$ of $\F_p$ and $\Q_p$, respectively,
    \item $\Q_p^{\ur}$ is the maximal unramified extension of $\Q_p$,
    \item $\breve{\Q}_p$ and $\C_p$ are the $p$-adic completions of $\Q_p^{\ur}$ and $\overline{\Q}_p$ respectively,
    \item $W = W(\overline{\F}_p)$ is the ring of Witt vectors,
    \item $G$ is a connected reductive group over $\Q_p$,
    \item Let $\sigma \in \Gal(\breve{\Q}_p/ \Q_p)$ be the Frobenius automorphism. $b$ and $b'$ in $G(\breve{\Q}_p)$ are $\sigma$-conjugate if there exists some $g \in G(\breve{\Q}_p)$ such that $b' = gb\sigma(g)^{-1}$,
    \item $B(G)$ is the Kottwitz set associated to $G$ (c.f. \cite{kottwitz1985isocrystals}),
    \item $\Gamma = \mathrm{Gal}(\overline{\Q}_p/\Q_p)$ 
    \item For any $p$-adic field $F$, $\cW_F$ is the Weil group of $F$. \\
\end{itemize} 

Let $G$ be a connected reductive group over $\Q_p$. Let $(G, [b], \{\mu\})$ be an \emph{unramified local Shimura datum of abelian type} (c.f. \Cref{Sec: 3} for details). To this triple, we associate a \emph{Rapoport--Zink space of abelian type} $\RZ_G$ following Shen's construction in \cite{ShenAbelianRZ}. This is a formal scheme over $\spf(\breve{\Z}_p)$, with a rigid analytic fibre $\RZ_G^{\rig}$ and a tower of \'etale covers $\{\RZ_G^{K_p}\}$, where each $K_p \subset G(\Z_p)$ is an open compact subgroup. For each $i > 0$, these give rise to $\ell$-adic cohomology groups:

$$\H^i(\RZ_G^{K_p}) := \H_c^i(\RZ_G^{K_p} \otimes_{\breve{\Q}_p}\C_p, \Q_{\ell}(\dim \RZ_G^{K_p}))$$

In particular, the tower $\{\H^i(\RZ_G^{K_p})\}$ carries a natural action of $G(\Q_p) \times \cW_E \times J_b(\Q_p)$, where $\cW_E$ is the Weil group of the reflex field and $J_b$ is the algebraic group of inner automorphisms associated to the local Shimura datum (c.f. \Cref{Sec: 3}). Let $\rho$ be an admissible $\ell$-adic representation of $J_b(\Q_p)$. We first define
\[\H^{i,j}(\RZ_{G}^\infty)_\rho := \varinjlim_{K_p}\;\Ext^{j}_{J_b(\Q_p)}(H^i(\RZ_{G}^{K_p}),\rho)\]
From this, we get a virtual representation:
\[\mathrm{H}^\bullet(\RZ^\infty_{G})_\rho := \sum_{i,j \geq 0}(-1)^{i+j}\H^{i,j}(\RZ_{G}^\infty)_\rho \]

An important assumption we make is that the local Shimura datum is \emph{Hodge--Newton reducible} with respect to a fixed parabolic $P$ and a Levi $L$ of $G$. By definition, there is a particular choice of representatives $b \in [b] \cap L(\breve{\Q}_p)$ and $\mu \in \{\mu\}$ factoring through $L$ yielding an unramified local Shimura datum $(L, [b], \{\mu\})$ of abelian type (c.f. \Cref{Sec: 4} and \Cref{Sec: 7} for details). This assumption allows us to reduce $\ell$-adic cohomologies of the tower of rigid analytic spaces of $\RZ_G$ in a natural way.

Furthermore, for the main result, we will be working with a finite extension $E^{\opt}$ of the reflex field $E$ of the unramified local Shimura datum $(G, [b], \{\mu\})$ of abelian type, which only depends on the datum. For details on this construction, please refer to \Cref{thm: wdd-Shen}. Finally, we assume that $(G, [b], \{\mu\})$ has an associated Hodge lift $(H, [b'], \{\mu'\})$ such that the map $\pi_1(H)^{\Gamma} \to \pi_1(H^{\ad})^{\Gamma}$ is surjective (c.f. \Cref{Sec: 5} for further details).

Our main theorem is as follows:

\begin{theorem}\label{thm: HV full abelian}
Let $(G, [b], \{\mu\})$ be a local Shimura datum satisfying the assumptions above. Then for any admissible $\overline{\Q}_\ell$-representation $\rho$ of $J_{b}(\Q_p)$, there is an equality
    \begin{align*}
    \H^\bullet(\RZ_{G}^\infty)_\rho = \mathrm{Ind}_{P(\Q_p)}^{G(\Q_p)} \H^\bullet(\RZ_{L}^\infty)_\rho.
\end{align*}
as virtual representations of $G(\Q_p) \x \cW_{E^{\opt}}$.
\end{theorem}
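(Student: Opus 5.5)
I will follow Mantovan's strategy as adapted by Hong to the Hodge case. The idea is to realize the $G$-tower as parabolically induced from a $P$-tower, and to show that the $P$-tower and the $L$-tower have the same cohomology. Concretely, there are three steps.

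\textbf{Step 1: the $P$-space and induction.} Using Hodge--Newton reducibility, I choose $b\in L(\breve{\Q}_p)$ and $\mu$ factoring through $L$, so that $(L,[b],\{\mu\})$ is again an unramified local Shimura datum of abelian type. I also need a parabolic datum $(P,[b],\{\mu\})$ and a Rapoport--Zink space $\RZ_P$ of abelian type. Following Shen's construction, I would build $\RZ_P$ from a Hodge lift $(H,[b'],\{\mu'\})$. The surjectivity of $\pi_1(H)^\Gamma\to\pi_1(H^{\ad})^\Gamma$ lets me write $\RZ_G$ as a disjoint union of $G(\Q_p)$-translates of a connected component of $\RZ_H$, modulo a group action. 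The Hodge-type space $\RZ_H$ carries, by Hong's work, the Hodge--Newton filtration of the universal $p$-divisible group with tensors. This gives a closed immersion $\RZ_{P_H}\hookrightarrow\RZ_H$, and in the Hodge--Newton reducible case an isomorphism $\RZ_{P_H}^{\rig}\cong\RZ_H^{\rig}$ on generic fibers after passing to the tower modulo $P_H$-level structures.

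Transporting this through Shen's quotient construction gives a $G(\Q_p)\times J_b(\Q_p)\times\cW_{E^{\opt}}$-equivariant identification of towers
\[\RZ_G^{K_p}\cong \coprod_{G(\Q_p)/P(\Q_p)K_p}\RZ_P^{K_p\cap P}.\]
I will check that this is compatible with the component group $\pi_0$ and with the twisted descent datum. The latter is where $E^{\opt}$ enters: over $E^{\opt}$ the Weil action on the abelian-type tower is well defined (\Cref{thm: wdd-Shen}). The identification then yields
\[\H^\bullet(\RZ_G^\infty)_\rho=\mathrm{Ind}_{P(\Q_p)}^{G(\Q_p)}\H^\bullet(\RZ_P^\infty)_\rho.\]

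\textbf{Step 2: comparing $P$ and $L$.} There is a natural morphism $\RZ_P\to\RZ_L$ that takes the graded pieces of the filtration. Conversely, $\RZ_L\to\RZ_P$ is a section. In the Hodge type case, Mantovan and Hong show that on the tower level this morphism is a fibration whose fibers are (towers of) affine-space-like spaces of extensions. These fibers are built from the unipotent radical $N$, with level $K_p\cap N$ and cohomologically trivial fibers, and the relevant degree shift is absorbed into the normalizing Tate twist $\dim$. I would prove the abelian-type analogue by reducing again to the Hodge lift. The map commutes with the group actions of $Z$ and the quotients used in Shen's construction, because $N$ is unchanged under passing from $H$ to $G$ (they share the adjoint group) and the central actions preserve the filtration.

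Combining with the Ext spectral sequence in $\rho$ and taking limits over $K_p$ gives $\H^\bullet(\RZ_P^\infty)_\rho=\H^\bullet(\RZ_L^\infty)_\rho$ as $P(\Q_p)\times\cW_{E^{\opt}}$-virtual representations, with $N(\Q_p)$ acting trivially. Together with Step 1 this proves \Cref{thm: HV full abelian}.

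\textbf{Main obstacle.} I expect the main difficulty to be Step 1, specifically showing that the Hodge-type decomposition descends through Shen's quotient construction. Three things must be controlled: the Hodge lift of $L$ must again satisfy the hypotheses, the $\pi_1$-surjectivity must give the right component bookkeeping for both $G$ and $L$, and the Weil descent datum must be compatible. My first attempt would be to work connected-component-wise on $\RZ_H^{\rig}$, where all three spaces share the adjoint datum, and to verify the identifications equivariantly for the group $\mathcal{A}(G)$ that Shen uses to glue.
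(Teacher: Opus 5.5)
Your proposal follows essentially the same route as the paper: it restricts Hong's decomposition $\RZ_H^{K_H}=\bigsqcup_{K_H\backslash H(\Q_p)/P_H(\Q_p)}\RZ_{P_H}^{K_H}$ and his $P$-versus-$L$ comparison (via the fibration in balls) to the $+$-parts, identifies the index sets for $H$ and $G$ through the common adjoint group, and spreads everything out to the abelian-type towers, exactly as in your Steps 1 and 2. One correction to your description of Shen's construction: $\RZ_G=J_{b_G}(\Q_p)\RZ_G^+$ is glued from $J_{b_G}(\Q_p)$-translates of $\RZ_H^+=\omega_H^{-1}(c_{b_H,\mu_H})$, not $G(\Q_p)$-translates (those act only on the tower), and that fibre need not be a single connected component.
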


\subsection{History of results on the Harris--Viehmann conjecture:}
\begin{enumerate}
    \item Boyer proved the Harris--Viehmann conjecture for Drinfeld modular varieties in \cite{boyer1999mauvaise}.
    \item Mantovan proved the existence of the canonical Hodge--Newton filtration for $p$-divisible groups with additional structures in \cite{mantovan2008non}. She passed to their Rapoport–Zink spaces of PEL type under the assumption that the Newton polygon coincides with the Hodge polygon up to or from the nontrivial break contact point, to prove the Harris--Viehmann conjecture.
    \item Shen followed up on this in \cite{shen2014hodge} by considering the more general case when the Newton polygon admits a nontrivial contact point with the Hodge polygon, and assuming that this contact point is a break point of the Newton polygon. This also established the Harris--Viehmann conjecture for Rapoport--Zink spaces of PEL type.
    \item Hansen gave another proof of the conjecture in \cite{hansen2021moduli} for $G = \mathrm{GL}_n$ under the Hodge--Newton reducibility assumption. He used the theory of local Shimura varieties developed by Scholze in \cite{scholze2018p}.
    \item Gaisin and Imai proved the conjecture (assuming Hodge--Newton reducibility) for a generalization of the diamond of a non-basic Rapoport--Zink space at infinite level in \cite{Gaisin_Imai_2025}.
    \item Hamann and Imai proved the conjecture under a mild Hodge--Newton reducibility condition for a general local Shimura variety, by studying the moduli stack of parabolic bundles on the Fargues--Fontaine curve in \cite{hamann2025dualizing}.
\end{enumerate}

Our main goal is to establish the Harris--Viehmann conjecture for unramified local Shimura data of abelian type, which are Hodge--Newton reducible with respect to fixed Levi and parabolic subgroups. We work with Rapoport--Zink spaces arising from unramified local Shimura data of abelian type as constructed in \cite{ShenAbelianRZ}. \\

\subsection{Road map for the paper}
In \Cref{Sec: 3}, we review some background, including relevant definitions. In \Cref{Sec: 4}, we recall the construction of Rapoport--Zink spaces of Hodge type in \cite{kim2018rapoport}, and summarize the Mantovan--Hong method, which proves the Harris--Viehmann conjecture for Hodge--Newton reducible unramified local Shimura data of Hodge type. In \Cref{Sec: 5}, we recount the construction of Rapoport--Zink spaces of abelian type in \cite{ShenAbelianRZ}, as well as some geometric properties that follow. We take a detour to address the Weil descent datum for local Shimura varieties of abelian type in \Cref{Sec: 6}, for completeness. \Cref{Sec: 7}-\Cref{Sec: 9} relates the geometry of Rapoport--Zink spaces of abelian type with Hodge--Newton reducibility, setting up for studying their cohomologies in \Cref{Sec: 10}. We put the results together to finish the proof of the Harris--Viehmann conjecture for Hodge--Newton reducible unramified local Shimura data of abelian type in \Cref{Sec: 10}.

\section{Acknowledgments} \label{Sec: 2}

We would like to thank Serin Hong for giving us valuable feedback on an early version of the draft. We also want to thank Xu Shen, Jeff Achter, Linus Hamann, and Alexander Bertoloni Meli for helpful conversations. 

\section{Background}\label{Sec: 3}

We first recall some important definitions.

\begin{definition} [{\cite[Definition 5.1]{RapoportViehmann}}]
    A local Shimura datum over $\Q_p$ is a triple $(G, [b], \{\mu\})$, where:
    \begin{enumerate}
        \item $G$ is a connected reductive group over $\Q_p$.
        \item $[b] \in B(G)$ is a $\sigma$-conjugacy class.
        \item $\{\mu\}$ is a conjugacy class of cocharacters $\mu: \mathbb{G}_m \to G_{\overline{\Q}_p}$
    \end{enumerate}
     such that the following conditions hold:
     \begin{enumerate}
         \item $[b] \in B(G, \{\mu\})$ (the subset of the Kottwitz set bounded by $\{\mu\}$, c.f. \cite{rapoport1996period}).
         \item $\mu$ is minuscule, i.e. $|\mu| \in \{-1,0,1\}$.
     \end{enumerate}
     Moreover, we have the following quantities associated with a local Shimura datum:
     \begin{enumerate}
         \item The field of definition $E=G_E$ of $\{\mu\}$ is called the reflex field, which is a finite extension of $\Q_p$. Let $\cW_{E}$ denote the Weil group of $E$. 
         \item The reductive algebraic group $J_b$ over $\Q_p$ for $b \in [b]$.
         It has the following functor of points:
         \begin{align*}
             J_b(R) = \{g \in G(R \otimes_R \breve{\Q}_p) \;:\; b = gb\sigma(g)^{-1} \}
         \end{align*}
         for any $\Q_p$-algebra $R$. Note that the isomorphism class of $J_b$ depends only on the $\sigma$-conjugacy class of $b$. $J_b$ is an inner form of a particular Levi subgroup of $G$ within which $b$ is a central element, c.f. \cite[Remark 1.15]{rapoport1996period}. 
       
     \end{enumerate}
\end{definition}

In \cite{rapoport1996period}, the local Shimura data considered came exclusively from PEL data. Like with the global theory of Shimura varieties, there are more general cases than that:

\begin{definition}[{\cite[Remark 5.4(i)]{RapoportViehmann}}]

    A local Shimura datum $(G, [b], \{\mu\})$ is of \emph{Hodge type} if there exists an embedding $G \hookrightarrow \GL(V)$ for some $\Q_p$-vector space $V$, and a local Shimura datum $(\GL(V), [b'], \{\mu'\})$ such that $[b]$ and $\{\mu\}$ are mapped to $[b']$ and $\{\mu'\}$ respectively. Here, $\{\mu'\}$ corresponds to $(1^r, 0^{n-r})$ for some integer $ 0 \leq r \leq n = \dim(V)$. 
\end{definition}

\begin{enumerate}
    \item Any local Shimura datum $(G, [b], \{\mu\})$ of EL/PEL type (coming from an EL/PEL Shimura datum) is also of Hodge type.
    \item Let $(G,X)$ be a (global) Shimura datum of Hodge type, i.e. there exists some embedding into the Siegel Shimura datum $(G,X) \hookrightarrow (\mathrm{GSp}, S^\pm)$, where $S^{\pm}$ are the connected components of the Siegel upper half-space. Let $\mu$ be the cocharacter associated to $X$, and $[b] \in B(G, \{\mu\})$. Then $(G_{\Q_p}, [b], \{\mu\} )$ is a local Shimura datum of Hodge type that is not of PEL type. 
\end{enumerate}

\begin{definition}\cite[Definition 3.4]{ShenAbelianRZ}
    The group $G$ is unramified if it is quasisplit over $\Q_p$ and splits over $\Q_p^{\mathrm{ur}}$. Equivalently, $G$ admits a reductive model over $\Z_p$. A local Shimura datum $(G, [b], \{\mu\})$ of Hodge type is \emph{unramified} if $G$ is unramified.
\end{definition}

\begin{convention}\label{conv: LSD}
    When additional clarification is required, we will write the local Shimura datum as $(G,[b_G], \{\mu_G\})$.
\end{convention}

Associated to a local Shimura datum $(G,[b_G], \{\mu_G\})$ is an ``adjoint datum'' $(G^{\ad},[b_{G^{\ad}}],\{\mu_{G^{\ad}}\})$, where $G^{\ad}:=G/Z(G)$ is the adjoint quotient of $G$. In fact, the quotient map $G\twoheadrightarrow G^{\ad}$ induces a map of Kottwitz sets $B(G)\to B(G^{\ad})$. We write $b_{G^{\ad}}$ for the image of $b_G\in B(G)$ under this map. We then write
\begin{equation*}
    \mu_{G^{\ad}}:\mathbb{G}_m\xrightarrow{\mu_G} G_{\overline{\Q}_p}\to G_{\overline{\Q}_p}^{\ad }
\end{equation*}
for the induced cocharacter of $G^{\ad}$. It is easy to check the triple $(G^{\ad}, [b_{G^{\ad}}], \{\mu_{G^{\ad}}\})$ indeed forms a local Shimura datum. The morphism $(G, [b_G], \{\mu_G\}) \to (G^{\ad}, [b_{G^{\ad}}], \{\mu_{G^{\ad}}\})$ is a morphism of local Shimura data. This leads us to the following definition:

\begin{definition}
    A local Shimura datum $(G, [b_G], \{\mu_G\})$ is called of \emph{abelian type}, if there exists a local Shimura datum of Hodge type $(H, [b_H], \{\mu_H\})$ such that there is an isomorphism of the associated adjoint local Shimura data $(G^{\ad}, [b_{G^{\ad}}], \{\mu_{G^{\ad}}\}) \cong (H^{\ad}, [b_{H^{\ad}}],\{\mu_{H^{\ad}}\})$. A local Shimura datum $(G, [b_G], \{\mu_G\})$ of abelian type is called \textit{unramified} if $G$ is unramified and there is an unramified local Shimura datum $(H, [b_H], \{\mu_H\})$ of Hodge type with $(G^{\ad}, [b_{G^{\ad}}], \{\mu_{G^{\ad}}\}) \cong (H^{\ad}, [b_{H^{\ad}}], \{\mu_{H^{\ad}}\})$.
\end{definition}

In particular, any local Shimura datum of Hodge type is also of abelian type, and the following example establishes the latter as a strictly larger class of objects:

\begin{example} \cite[Example 5.6]{RapoportViehmann}
    Let $G=\PGL_n$ for $n \geq 2$. Consider a nontrivial minuscule cocharacter $\mu_H:\GG_m\to \GL_n$ and $[b_H]\in B(\GL_n,\mu_H)$. Take $\mu_G=\mu_{H^{\ad}}$ and $[b_G]=[b_{H^{\ad}}]$. Then $(G, [b], \{\mu\})$ is an unramified local Shimura datum of (strictly) abelian type.
\end{example}

The associated local reflex field $E = E(G, [b_G], \{\mu_G\})$ is an unramified extension of $\Q_p$, giving rise to $\breve{E} = W_{\Q_p}$ and $\mathcal{O}_{\breve{E}} = W$.

From here onward, we assume that all the local Shimura data mentioned are unramified unless otherwise stated.

\section{The Hodge case}\label{Sec: 4}
\subsection{Rapoport--Zink spaces of Hodge type}
Fix an unramified local Shimura datum of Hodge type $(G, [b], \{\mu\})$ and a faithful $G$-representation $\Lambda \in \mathrm{Rep}_{\Z_p}(G)$. The condition of being unramified allows us to pick $b \in [b]$ and $\mu \in \{\mu\}$ such that $b \in G(\breve{\Z}_p)\mu(p)G(\breve{\Z}_p)$. This allows for the construction of an $F$-isocrystal with $G$-structure, that gives rise to a $p$-divisible group with $G$-structure up to isogeny, denoted by $\underline{X}=(X, (t_i))$ (c.f. \cite{kim2018rapoport} for the detailed construction). Let $\mathrm{Nilp}_{\breve{\Z}_p}$ denote the category of $\breve{\Z}_p$-algebras where $p$ is nilpotent. 

We define the set-valued covariant functor $\RZ_G$ on $\mathrm{Nilp}_{\breve{\Z}_p}$ as follows: for any $R \in \mathrm{Nilp}_{\breve{\Z}_p}$, $\RZ_G(R)$ is the set of isomorphism classes of pairs $(\mathcal{X}, \iota)$, where $\mathcal{X}$ is a $p$-divisible group and $\iota: X_{R/p} \to \mathcal{X}_{R/p}$ is a quasi-isogeny. $\RZ_G$ is independent of the choice of $b \in [b] \cap G(\breve{\Z}_p)\mu(p)G(\breve{\Z}_p)$ up to isomorphism. It was shown in \cite{rapoport1996period} that the functor $\RZ_G$ is represented by a formal scheme which is locally formally of finite type and formally smooth over $\breve{\Z}_p$. For brevity, we also write $\RZ_G$ for the formal scheme representing the functor and $\mathcal{X}$ for the universal $p$-divisible group over it.

For a pair $(\mathcal{X}, \iota)$, we get an isomorphism of $F$-isocrystals with $G$-structure induced by $\iota$:
\begin{align*}
    \mathbb{D}(\iota): \mathbb{D}(\mathcal{X}_{R/p})[1/p] \xrightarrow{\cong}  \mathbb{D}(X_{R/p})[1/p] 
\end{align*}

Let $(t_{\mathcal{X},i})$ denote the inverse image of the tensors $(t_i)_R$ under this isomorphism. 

Let $\mathrm{Nilp}_{\breve{\Z}_p}^{\mathrm{sm}}$ be the collection of formally smooth and formally finitely generated algebras over $\breve{\Z}_p/p^m$ for some $m \in \Z_{>0}$. This is a full subcategory of $\mathrm{Nilp}_{\breve{\Z}_p}$. 

We define the set-valued covariant functor $\RZ_{G,b}^{(s_i)}$ on $\mathrm{Nilp}_{\breve{\Z}_p}^{\mathrm{sm}}$ as follows: take any $R \in \mathrm{Nilp}_{\breve{\Z}_p}^{\mathrm{sm}}$, a morphism $f: \spf (R)\to \RZ_G$, and a $p$-divisible group $\mathcal{X}$ over $\Spec(R)$ which pulls back to $f^*\mathcal{X}_{\GL,b}$ over $\spf(R)$. Then $f \in \RZ_{G,b}^{(s_i)}(R)$ if and only if there exists a
(unique) family of tensors $(\textbf{t}_i)$ on $\mathbb{D}(\mathcal{X})$ such that the following holds:
\begin{itemize}
    \item For an ideal of definition $I$ of $R$ containing $p$, the pullback of $(\textbf{t}_i)$ over $R/I$ is compatible with the pullback of $(t_{\mathcal{X},i})$ over $R/I$.
    \item Let $\mathcal{R}$ be a $p$-adic lift of $R$ that is formally smooth over $\breve{\Z}_p$. Then the following $\mathcal{R}$-scheme is a $G$-torsor:
    \begin{equation*}
        \mathcal{P}_{\mathcal{R}}:=\mathrm{Isom}_{\mathcal{R}} ((\mathbb{D}(\mathcal{X})_ {\cR},(\textbf{t}_i)_{\cR}), (\Lambda^*\otimes_{\Z_p} \cR, s_i\otimes 1))
    \end{equation*}
    \item The Hodge filtration of $ \cX$ is a $\{\sigma^{-1}(\mu^{-1})\}$-filtration with respect to $\textbf{t}_i$.
\end{itemize}

By \cite[Theorem 4.9.1]{kim2018rapoport}, there exists a closed formal subscheme $\RZ_{G,b} \subset \RZ_{b}$ which is formally smooth over $\breve{\Z}_p$ and locally formally of finite type over $\spf(\breve{\Z}_p)$, representing the functor $\RZ_{G,b}^{(s_i)}$ for any choice of $(s_i)$. Its isomorphism class only depends on the local Shimura datum of Hodge type $(G, [b], \{\mu\})$. By \cite{berthelot1996cohomologie}, $\RZ_{G,b}$ admits a rigid analytic fibre, denoted by $\RZ_{G,b}^{\mathrm{rig}}$. By varying the level structure $K_p \subset G(\Z_p)$, we can construct a tower of \'etale coverings of this rigid analytic fibre, written as $\RZ_{G, b}^{K_p}$. At the infinite level, this construction gives rise to $\RZ_{G,b}^\infty$, which is the local Shimura variety of Hodge type conjectured by \cite{RapoportViehmann} and proven in \cite{kim2018rapoport}. The cohomology groups of this object detect the representations of $G(\Q_p)$, $J_b(\Q_p)$, and $\cW_E$.

\subsection{Hodge--Newton reducibility for the Hodge type}

\begin{definition}[{\cite[\S 4.1.4]{HongPublished}}]\label{def:HN-reducible}
    Let $(G,[b],\{\mu\})$ be an unramified local Shimura datum. Fix a parabolic subgroup $P$ of $G$, with Levi $L$ and unipotent radical $U$. The datum $(G,[b],\{\mu\})$ is called \textit{Hodge--Newton reducible} with respect to $P$ and $L$ if there exists $\mu\in\{\mu\}$ and $b\in [b]\cap L(\breve{\Q}_p)$ with the following properties:
    \begin{enumerate}
        \item The cocharacter $\mu$ factors through $L$.
        \item Let $[b]$ be the $\sigma$-conjugacy class of $b$ in $L(\breve{\Q}_p)$. Then $[b]\cap L(\Breve{\Z}_p)\mu(p)L(\Breve{\Z}_p)$ is not empty.
        \item In the action of $\mu$ and $\nu_{b}$ on $\mathrm{Lie}(U)\otimes_{\Q_p}\Breve{\Q}_p$, only non-negative characters occur.
    \end{enumerate}
\end{definition}

We recall the following concrete example from \cite[Section 4.2.1]{HongPublished}. 
\begin{example} \label{eg: GL_n}
    Let $G = \mathrm{Res}_{ \cO_F/\Z_p}\GL_n$, where $ \cO_F$ is the ring of integers of a finite extension $F$ of $\Q_p$. For a partition $n= \sum_{k=1}^r j_k$, the Levi factor $L$ is of the form
\[L = \prod_{k=1}^{r}\mathrm{Res}_{\cO_F/\Z_p}\mathrm{GL}_{j_k}\]
Moreover, we have the following identification of the set of Newton points \cite[Example 1.10]{rapoport1996classification}:
\[\mathcal{N}(G) = \{(\lambda_1,...,\lambda_n) \in \Q^n \;:\; \lambda_1 \leq \lambda_2\leq...\leq \lambda_n\}\]
Using the Newton map, we can obtain the slopes of the Newton polygon $(\nu_1,...,\nu_n)$ from $[b]$, and the slopes of the $\sigma$-invariant Hodge polygon $\overline{\mu}=(\mu_1,...,\mu_r)$ from $\{\mu\}$. Let 
\[m_c := \sum_{k=1}^c j_k \;\text{ for }0 \leq c \leq r\]

Let $\nu_{m_c}$ denote the \emph{last slope} of the $c^{\mathrm{th}}$-block, and let $\nu_{m_c+1}$ denote the \emph{first slope} of the $(c+1)^{\mathrm{th}}$-block. This distinction is important as a given block might have multiple slopes. The local Shimura datum $(\mathrm{Res}_{ \cO_F/\Z_p}\GL_n, [b], \{\mu\})$ is Hodge--Newton reducible with respect to $L$ when:
\begin{itemize}
   \item The Newton polygon meets the Hodge polygon in certain points specified by the Levi subgroup:
   \begin{align*}
       \sum_{a=1}^{m_c} \nu_a = \sum_{b=1}^c \mu_b \; \text{ for each } 1 \leq c \leq r.
   \end{align*}
    \item These points are break points of the Newton polygon:
    \begin{align*}
        \nu_{m_c} < \nu_{m_c + 1} \; \text{ for each }0 \leq c \leq r-1
    \end{align*}
\end{itemize}
\end{example}

\begin{remark}
    A purely group-theoretic definition of Hodge--Newton reducibility is given in \cite[Definition 3.1]{chen2023fargues} and \cite[Definition 2.1]{chen2022weakly}, which is applicable in a more general setting involving local shtuka spaces. As proved in \cite[Lemma 3.3]{chen2023fargues}, it reduces to the definition mentioned above when $G$ is quasi-split.
\end{remark}

\subsection{The conjecture and the strategy}
We now state the Harris--Viehmann conjecture for Hodge--Newton reducible Rapoport--Zink spaces of Hodge type.

The method used in \cite{HongPublished} (originally devised in \cite{mantovan2008non}) is as follows:
\begin{enumerate}
     \item By the existence of the EL-realization, we have $\widetilde{G}$ and corresponding Levi subgroup $\widetilde{L}$ coming from a fixed parabolic $\widetilde{P}$. The closed embedding $G \hookrightarrow \widetilde{G}$ induces the closed embedding of Rapoport--Zink spaces of Hodge type $\RZ_{G,b} \hookrightarrow \RZ_{\widetilde{G}, b}$ by \cite{kim2018rapoport}. In \cite{mantovan2008non}, $\widetilde{P}$ is used to construct the formal scheme $\RZ_{\widetilde{P}, b}$. By \cite{HongPublished}, the pullback of $\RZ_{\widetilde{P}, b}$ over $\RZ_{G,b}$ gives rise to the space $\RZ_{P,b}$, which is the analogue of a Rapoport--Zink space corresponding to a fixed parabolic $P$ of $G$.
     
    \item It is then shown that the rigid analytic generic fiber of the space $\RZ_{G,b}$ is parabolically induced from the rigid analytic generic fiber of $\RZ_{L,b}$. This is proved by:
    \begin{lemma} \cite[Lemma 4.3.1]{HongPublished} Consider the maps $\pi_1: \RZ_{P,b} \to \RZ_{L,b}$ and $\pi_2: \RZ_{P,b} \to \RZ_{G,b}$. We abuse notation to refer to the corresponding maps on the respective rigid analytic generic fibres. Then, the rigid generic fibres of $\RZ_{G,b}$, $\RZ_{P,b}$ and $\RZ_{L,b}$ fit into the following diagram
        \[
        \begin{tikzcd}
        & {\RZ_{P,b}^{\mathrm{rig}}} \arrow[ld, "\pi_1"] \arrow[rd, "\pi_2"] & \\
        {\RZ_{L,b}^{\mathrm{rig}}} \arrow[ru, "s", bend left] & & {\RZ_{G,b}^{\mathrm{rig}}}
        \end{tikzcd}
        \]
        such that:
        \begin{enumerate}
            \item $s$ is a closed immersion.
            \item $\pi_1$ is a fibration in rigid-analytic balls.
            \item $\pi_2$ is an isomorphism.
        \end{enumerate}
    \end{lemma}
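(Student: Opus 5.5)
Following Mantovan and Hong, I would construct the three maps at the level of formal schemes and then pass to rigid generic fibres. The space $\RZ_{P,b}$ is defined as the pullback of Mantovan's $\RZ_{\widetilde P,b}$ along $\RZ_{G,b}\hookrightarrow\RZ_{\widetilde G,b}$. A point of $\RZ_{\widetilde P,b}$ is a $p$-divisible group $\mathcal X$ with a quasi-isogeny $\iota$, together with a filtration $0\subset \mathcal X_1\subset\cdots\subset\mathcal X_r=\mathcal X$ lifting the slope filtration of $X$ determined by $\widetilde P$. The maps are then as follows.
\begin{itemize}
\item $\pi_2$ forgets the filtration.
\item $\pi_1$ takes the graded pieces $\bigoplus \mathcal X_k/\mathcal X_{k-1}$. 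Since the tensors $(t_i)$ respect the $L$-grading of the Dieudonné module, the graded object carries $L$-structure and lands in $\RZ_{L,b}\subset\RZ_{\widetilde L,b}$.
\item $s$ sends a point $(\mathcal Y_1,\dots,\mathcal Y_r)$ of $\RZ_{L,b}$ to the split filtered object $\bigoplus\mathcal Y_k$.
\end{itemize}

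For (a), $s$ is a closed immersion because it is a section of $\pi_1$ (one has $\pi_1\circ s=\mathrm{id}$), and $\RZ_{P,b}$ is separated, being a closed formal subscheme of the separated space $\RZ_{\widetilde P,b}$. A section of a separated morphism is a closed immersion, and this property persists on generic fibres. For (b), I would study the fibres of $\pi_1$. They parametrize iterated extensions of the graded pieces that keep the $G$-tensors and the Hodge filtration of type $\sigma^{-1}(\mu^{-1})$. By Grothendieck--Messing theory, their deformation theory is governed by $\mathrm{Lie}(U)$ twisted by the Hodge filtration. Because $\mu$ has only non-negative weights on $\mathrm{Lie}(U)$ (condition (3) of \Cref{def:HN-reducible}), the fibres are formally smooth. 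On the generic fibre they become open polydiscs, namely extensions of $p$-divisible groups parametrized by the unipotent part of the $\mathrm{Ext}$ formal groups. I would prove this first for $\widetilde G$ following Mantovan, then cut out the $G$-tensor locus and check it is a sub-ball, using Kim's description of formal neighbourhoods in $\RZ_{G,b}$ as completions of $U^{\mu}$-type unipotent groups.

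The main obstacle is (c): showing $\pi_2$ is an isomorphism on rigid fibres. Surjectivity on points is the existence of a canonical Hodge--Newton filtration on every $\mathcal X$ over $\RZ_{G,b}^{\rig}$ respecting the tensors. For this I would invoke Mantovan's theorem for $\widetilde G$, whose filtration is canonical and hence unique. Uniqueness gives injectivity. It also implies that the filtration is stable under the automorphisms fixing the tensors. By an argument on $\mathcal P_{\mathcal R}$ torsors, this forces the filtration to be cut out by a reduction of the $G$-torsor to $P$. To see that $\pi_2$ is an isomorphism rather than just a bijection, I would show it is étale via tangent spaces: the $\mathrm{Lie}(U)$ contributions to $\pi_1$ match those of $\RZ_{G,b}$ modulo $\RZ_{L,b}$, using condition (3) on $\nu_b$. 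A proper étale bijection of smooth rigid spaces is then an isomorphism. The delicate point is compatibility of the canonical filtration with the crystalline tensors $(\mathbf t_i)$ over non-reduced bases. I would handle it by checking it on $\overline{\F}_p$-points through Dieudonné theory and then spreading it out using formal smoothness of $\RZ_{G,b}$.
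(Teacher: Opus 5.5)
The paper does not prove this lemma; it imports it from Hong. Your outline follows the Mantovan--Hong route the paper sketches: an EL realization $G\hookrightarrow\widetilde G$, with $\RZ_{P,b}:=\RZ_{\widetilde P,b}\times_{\RZ_{\widetilde G,b}}\RZ_{G,b}$. Your argument for (a), that a section of a separated map is a closed immersion, is fine once $\pi_1$ exists; the paper's abelian analogue instead uses that $\pi_2\circ s$ is the closed embedding induced by $L\hookrightarrow G$.

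However, you have put the difficulty in the wrong place. With this definition, (c) is formal. The rigid generic fibre commutes with fibre products, so $\pi_2^{\rig}$ is the base change of $\widetilde\pi_2^{\rig}$, which is an isomorphism by the EL case. This requires the EL realization itself to be Hodge--Newton reducible for $\widetilde P$ with $\widetilde P\cap G=P$ and $\widetilde L\cap G=L$. That is not automatic (it can fail for a plain Hodge embedding into $\GL(V)$) and must be part of the input. No tensor compatibility is needed for (c), so your ``étale + proper + bijective'' argument is superfluous; as written it also rests on a properness you never establish.

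In (b), condition (3) is not what makes the fibres of $\pi_1$ formally smooth; Grothendieck--Messing gives that without the Hodge--Newton hypothesis. Formal smoothness alone also does not give balls: each fibre must have a single $\overline{\F}_p$-point, and separation of slopes does not guarantee this. What actually does the work is your plan of importing the ball structure from $\widetilde G$ and cutting out a formally smooth closed formal subscheme.

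The genuine gap is the step you treat as evident: that $\pi_1$ lands in $\RZ_{L,b}$. Over every point of $\RZ_{P,b}$, the canonical filtration of $\mathcal X$ must be compatible with the crystalline tensors. That is, $\mathbf t_i\in\mathrm{Fil}^0$, and the filtration must define a reduction of $\mathcal P_{\mathcal R}$ to $P$. Only then does $\mathrm{gr}$ carry an $L$-structure, and only then do $\pi_1\circ s=\mathrm{id}$ and (b) make sense.

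Your assertion that ``the tensors respect the $L$-grading'' holds for the isocrystal of the framing object, not over general bases. Your justification in (c) does not work. Canonicity makes the filtration invariant only under automorphisms of the $p$-divisible group with its tensors, which are far too few to force a $P$-reduction. Meanwhile, the automorphism group of $(\mathbb{D}(\mathcal X),\mathbf t_i)$ as a module with tensors is a form of $G$ and stabilizes no filtration of type $P$ at all.

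Your closing idea, to check at $\overline{\F}_p$-points and then spread out, is the right one. But formal smoothness alone cannot propagate a closed condition from $\overline{\F}_p$-points, since all of them lie in the reduced special fibre. A working version goes as follows.
\begin{itemize}
\item Since $P=\widetilde P\cap G$, the locus where the $\widetilde P$-reduction given by the filtration lies in $\mathcal P/P\subset(\mathcal P\times^G\widetilde G)/\widetilde P$ is closed.
\item The Hodge--Newton subcrystal and the $\mathbf t_i$ are crystalline. So their specialization at any $W$-valued point equals their value at the underlying $\overline{\F}_p$-point.
\item At that $\overline{\F}_p$-point, the quasi-isogeny identifies the filtration with the one given by a cocharacter of the centre of $L$, so the condition holds there.
\item $W$-valued points are dense in the formally smooth formal schemes involved, so the condition holds everywhere.
\end{itemize}
This compatibility of the Hodge--Newton filtration with the $G$-structure is exactly the new input needed to pass from Mantovan's EL/PEL setting to Hodge type.
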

    
    \item The (generalized) cohomologies of $\RZ_{G,b}$, $\RZ_{L,b}$ with $\RZ_{P,b}$ are compared via this lemma.
   
\end{enumerate}

\begin{conjecture}(Harris--Viehmann conjecture) \cite[Conjecture 8.5]{RapoportViehmann}\label{cnj: HV OG}
    Fix a prime $\ell \neq p$. Let $(G, [b], \{\mu\})$ be a non-basic unramified local Shimura datum. Let $L$ be a Levi subgroup of $G$ such that $b \in L(\breve{\Q}_p)$ for some $b \in [b]$, and let $P$ be a standard parabolic containing $L$. Let $J_b$ be an inner form of a Levi subgroup contained in $L$. Let $I_{b,\{\mu\}, L}$ be the set of $L(\breve{\Z}_p)$-conjugacy classes of $L$ with cocharacter representative $\mu'$ such that:
\begin{enumerate}
    \item $\mu' \in \{\mu\}$.
    \item $[b] \cap L(\breve{\Z}_p)\mu'(p)L(\breve{\Z}_p)$ is non-empty.
\end{enumerate}
 Then for any admissible $\overline{\Q}_\ell$-representation $\rho$ of $J_b(\Q_p)$, the virtual representation  of $G(\Q_p)\x \mathcal{W}_E$ is expressed by the following parabolic inductive formula:
    \begin{align*}
        \mathrm{H}^\bullet(\RZ^\infty_{G, b})_\rho = \mathrm{Ind}_{P(\Q_p)}^{G(\Q_p)}\Big(\sum_{\{\mu'\}\in I_{b,\{\mu\}, L}}\mathrm{H}^\bullet(\RZ^\infty_{L, b})_\rho \Big).
    \end{align*}
\end{conjecture}

By \cite[Theorem 8.10]{RapoportViehmann}, it follows that  $I_{b,\{\mu\}, L}$ is a singleton when the local Shimura datum $(G, [b], \{\mu\})$ is unramified and of Hodge type. Thus, the precise statement of the Harris--Viehmann conjecture that is proved is in \cite[Theorem 1]{HongPublished} as follows:

\begin{theorem}
    Fix a prime $\ell \neq p$. Let $(G, [b], \{\mu\})$ be a non-basic unramified local Shimura datum of Hodge type which is Hodge--Newton reducible with respect to a parabolic subgroup $P$ of $G$ with Levi factor $L$.  For any admissible $\overline{\Q}_\ell$-representation $\rho$ of $J_b(\Q_p)$, the virtual representation  of $G(\Q_p)\x \mathcal{W}_E$ is expressed by the following parabolic inductive formula:
    \begin{align*}
        \mathrm{H}^\bullet(\RZ^\infty_{G, b})_\rho = \mathrm{Ind}_{P(\Q_p)}^{G(\Q_p)}\mathrm{H}^\bullet(\RZ^\infty_{L, b})_\rho
    \end{align*}
    It follows that the virtual representation $\mathrm{H}^\bullet(\RZ^\infty_{G, b})_\rho$ contains no supercuspidal representations of $G(\Q_p)$.
\end{theorem}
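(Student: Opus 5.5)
The statement immediately above is Hong's theorem for the Hodge case, so the plan follows the Mantovan--Hong method summarized in the preceding subsection. The first step is geometric. Using the EL-realization $G\hookrightarrow \widetilde G$ and the parabolic $\widetilde P\supset P$, I would define $\RZ_{P,b}$ as the pullback of Mantovan's $\RZ_{\widetilde P,b}$ along $\RZ_{G,b}\hookrightarrow\RZ_{\widetilde G,b}$. Hodge--Newton reducibility, in particular condition (3) of \Cref{def:HN-reducible}, is then used to produce the canonical Hodge--Newton filtration on the universal $p$-divisible group with tensors. Concretely, I would verify that the filtration attached to $\widetilde P$ is compatible with the tensors $(\mathbf t_i)$, so that it reduces the $G$-structure to a $P$-structure. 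This identifies $\RZ_{P,b}$ as a moduli space of $p$-divisible groups with $P$-structure, with natural maps $\pi_1\colon\RZ_{P,b}\to\RZ_{L,b}$ (take the graded pieces) and $\pi_2\colon\RZ_{P,b}\to\RZ_{G,b}$ (forget the filtration).

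The second step is the lemma recalled above (\cite[Lemma 4.3.1]{HongPublished}), which I would prove in three parts:
\begin{itemize}
\item $s$ is a closed immersion: it is given by the split extension.
\item $\pi_1$ is a fibration in rigid balls: deformation theory shows the fibres are extensions parametrized by $\mathrm{Lie}(U)$, and these are formal balls because only nonnegative weights occur.
\item $\pi_2$ is an isomorphism on rigid fibres: over $\C_p$-points the Hodge--Newton filtration exists and is unique.
\end{itemize}
I would then pass up the tower. On level structures I would show $\RZ_{G,b}^{K_p}\cong \coprod_{G(\Z_p)/K_p\cdot}\,\RZ_{P,b}^{K_p\cap P}$, equivariantly for $G(\Q_p)\times J_b(\Q_p)\times\cW_E$; here $J_b\subset L$ since $b\in L$. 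This gives $\H^\bullet(\RZ_{G,b}^\infty)=\mathrm{Ind}_{P(\Q_p)}^{G(\Q_p)}\H^\bullet(\RZ_{P,b}^\infty)$.

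The third step is cohomological. Because $\pi_1$ is a fibration in balls on every level, compactly supported cohomology of $\RZ_{P,b}^{K}$ equals that of $\RZ_{L,b}^{K\cap L}$ up to a Tate twist. The twist is absorbed by the normalization $\Q_\ell(\dim)$. The unipotent part $U(\Q_p)$ acts trivially in the limit. Applying $\Ext^j_{J_b(\Q_p)}(-,\rho)$, taking the colimit and the alternating sum then yields the formula. Since $I_{b,\{\mu\},L}$ is a singleton by \cite[Theorem 8.10]{RapoportViehmann}, this is exactly the displayed identity. The final assertion follows because a virtual representation parabolically induced from a proper parabolic has no supercuspidal constituents; the datum is non-basic, so $P\neq G$.

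I expect the main obstacle to be the first step: showing that the Hodge--Newton filtration, which lives on the $\widetilde G$ (EL) side, respects the Hodge tensors, so that $\pi_2$ is an isomorphism for $G$ and not only for $\widetilde G$. My first attempt would be to check this pointwise on $\C_p$-points via Fargues--Fontaine or $p$-adic Hodge theory. There the filtration is the slope filtration, which is canonical and hence $\mathbf t_i$-stable. I would then deduce the integral statement using formal smoothness of $\RZ_{G,b}$ together with the closed embedding into $\RZ_{\widetilde G,b}$.
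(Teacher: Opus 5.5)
Your outline is essentially the Mantovan--Hong argument that the paper summarizes and cites from \cite{HongPublished}. You pull back $\RZ_{\widetilde P,b}$ along the EL-realization to get $\RZ_{P,b}$, prove the lemma on $s$, $\pi_1$, $\pi_2$, decompose $\RZ_{G,b}^{K}$ over $K\backslash G(\Q_p)/P(\Q_p)$ to obtain the induction, and compare $\RZ_{P,b}$ with $\RZ_{L,b}$ cohomologically.

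Two small corrections. First, since $\RZ_{P,b}$ is defined as a pullback, $\pi_2$ being an isomorphism on rigid fibres is inherited by base change from the EL case; the tensor compatibility you single out is really needed for $\pi_1$, namely for it to land in $\RZ_{L,b}$ with balls as fibres at every level, which uses the auxiliary spaces $\RZ_P^{(m)}$ of \cite{HongPublished}. Second, $J_b$ agrees for $G$ and $L$ because the centralizer of $\nu_b$ lies in $L$ under Hodge--Newton reducibility, not merely because $b\in L$.
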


\section{Rapoport--Zink spaces of abelian type}\label{Sec: 5}

In this section, we review Shen's construction of Rapoport--Zink spaces of abelian type in \cite{ShenAbelianRZ}.

Consider an \textit{unramified} local Shimura datum  $(G, [b_G], \{\mu_G\})$; we will also write $G$ for the hyperspecial integral model of $G$ over $\Z_p$. $G(\breve{\Q}_p)$ admits the Cartan decomposition 
\begin{equation*}
    G(\breve{\Q}_p)=\coprod_{\{\mu\}}G(W)\mu(p)G(W).
\end{equation*}
where the union is over all conjugacy classes of cocharacters of $G$.
The \textit{Kottwitz map} (c.f. \cite{kottwitz1997isocrystals})
\begin{align*}
   \kappa_G: G(\breve{\Q}_p) \longrightarrow \pi_1(G)
\end{align*}
sends $g \in G(W)\mu(p)G(W)$ to the class of $\mu$. There exists an element $c_{b_G, \mu_G} \in \pi_1(G)$ such that $\kappa_G(b_G)-\mu_G = (1-\sigma)c_{b_G, \mu_G}$, with a unique $\pi_1(G)^{\Gamma}$-coset.

Fix a representative $b_G \in G(\breve{\Q}_p)$ of the class $[b_G]\in B(G)$. We get the \emph{affine Deligne-Lusztig variety (ADLV)} of the local Shimura datum:
\begin{align*}
    X_{\mu_G}^{G}(b_G)=\{g \in G(\breve{\Q}_p)/G(W) \;:\; g^{-1}b_G\sigma(g) \in G(W)\mu_G(p)G(W)\}
\end{align*}
We note that $X_{\mu_G}^{G}(b_G)$ admits a natural action by $J_{b_G}(\Q_p)$. Similarly, from the unramified local Shimura datum $(H, [b_H], \{\mu_H\})$ of Hodge type  associated to $(G, [b_G], \{\mu_G\})$, we get the ADLV $X_{\mu_H}^{H}(b_H)$. The ADLVs have natural structures of perfect schemes \cite{zhu2017affine}. By \cite{kim2018rapoport}, $X_{\mu_H}^{H}(b_H)$ is isomorphic to $\overline{\RZ}_H^{\mathrm{perf}}$, the perfection of the special fiber of $\RZ_H$.

The Kottwitz map $\kappa_H:H(\breve{\Q}_p)\to \pi_1(H)$ induces a map $\omega_H: X_{\mu_H}^{H}(b_H) \rightarrow c_{b_H, \mu_H}\pi_1(H)^{\Gamma}$ of \'etale sheaves.
Let $X_{\mu_H}^{H}(b_H)^+ \subset X_{\mu_H}^{H}(b_H)$ be its fiber over $c_{b_H, \mu_H}$, and $\RZ_H^+ \subset \RZ_H$ be the corresponding formal subscheme. Similarly, we have $X_{\mu_G}^{G}(b_G)^+ \subset X_{\mu_G}^{G}(b_G)$ as the fibre of the map $\omega_G: X_{\mu_G}^{G}(b_G) \rightarrow c_{b_G, \mu_G}\pi_1(G)^{\Gamma}$ over $c_{b_G, \mu_G}$. 

By \cite[Corollary 2.6]{ShenAbelianRZ}, $X_{\mu_G}^{G}(b_G)^+ \cong X_{\mu_H}^{H}(b_H)^+$. This motivates setting $\RZ_G^+:=\RZ_H^+$. Moreover, by \cite[Theorem 2.1]{ShenAbelianRZ}, $X_{\mu_G}^{G}(b_G) = J_{b_G}(\Q_p)X_{\mu_G}^{G}(b_G)^+$. We define 
\begin{align*}
    \RZ_G:= J_{b_G}(\Q_p)\RZ_G^+ \cong \bigsqcup_{J_{b_G}(\Q_p)/J_{b_G}(\Q_p)^+}\RZ_G^+
\end{align*}
where $J_{b_G}(\Q_p)^+$ is the stabilizer of $X_{\mu_G}^{G}(b_G)^+$ in $J_{b_G}(\Q_p)$. By construction, $\overline{\RZ}_G^{\mathrm{perf}}$ is isomorphic to $  X_{\mu_G}^{G}(b_G)$.

This construction is independent of the auxiliary local Shimura data of Hodge type, as there is an isomorphism $\RZ_{H_1}^+ \cong \RZ_{H_2}^+$ for two distinct such choices $(H_1, [b_{H_1}], \{\mu_{H_1}\})$ and $(H_2, [b_{H_2}], \{\mu_{H_2}\})$. 

Assuming that the datum $(G, [b_G], \{\mu_G\})$ admits an associated Hodge-type datum $(H, [b_H], \{\mu_H\})$ such that the map $\pi_1(H)^\Gamma\to \pi_1(H^{\ad})^\Gamma$ is surjective, we have the following identifications by \cite[Proposition~4.9]{ShenAbelianRZ}:
\begin{enumerate}
    \item $\RZ_{G^{\ad}} \cong \RZ_H/X_*(Z_H)^{\Gamma}$.
    \item $\RZ_G \cong \beta_G^*(\RZ_{G^{\ad}})$, where $\beta_G: \pi_1(G)^{\Gamma} \to \pi_1(G^{\ad})^{\Gamma}$ is the map induced by the quotient map $G\twoheadrightarrow G^{\ad}$.
\end{enumerate}

\begin{lemma}
    Let $(G, [b_G], \{\mu_G\})$ be an unramified local Shimura datum of abelian type, and $(H, b_H, \{\mu_H\})$ be the associated local Hodge type Shimura datum. Assume the natural map $\pi_1(H)^{\Gamma} \twoheadrightarrow \pi_1(H^{\ad})^{\Gamma}$ is surjective. Then we have the following isomorphisms between formal schemes over $\Spf\breve{\Z}_p$:
    \begin{align*}
        \RZ_G^+ = \RZ_H^+\cong \RZ_H/\pi_1(H)^{\Gamma} \cong \RZ_{H^{\ad}}/\pi_1(H^{\ad})^{\Gamma}.
    \end{align*}
\end{lemma}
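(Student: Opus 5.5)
The equality $\RZ_G^+=\RZ_H^+$ holds by definition, so there are two isomorphisms to prove. In both, a $p$-adic lattice group acts freely on a Rapoport--Zink space, permuting its connected pieces, and the quotient is one piece. The plan is to work first on perfected special fibres, i.e.\ on ADLVs, where the Kottwitz map makes everything combinatorial, and then lift to formal schemes. The lift is possible because the decomposition by $\omega$ is a decomposition into open and closed formal subschemes, and open and closed subschemes are determined by the special fibre.

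\textbf{Step 1: $\RZ_H^+\cong\RZ_H/\pi_1(H)^\Gamma$.}
\begin{enumerate}
\item Note that $\pi_1(H)^\Gamma$ is the image of $\kappa_H$ restricted to $J_{b_H}(\Q_p)$, or at least it contains that image. The realization I would use is through the centre: elements of $J_{b_H}(\Q_p)$ acting on $X^H_{\mu_H}(b_H)$ shift $\omega_H$ by their Kottwitz invariant. By \cite[Theorem 2.1]{ShenAbelianRZ}, $J_{b_H}(\Q_p)$ acts transitively on the fibres of $\omega_H$, and the image of $\omega_H$ is the full coset $c_{b_H,\mu_H}\pi_1(H)^\Gamma$.
\item Choose a subgroup $\Lambda\subset J_{b_H}(\Q_p)$ mapping isomorphically onto the relevant image in $\pi_1(H)^\Gamma$. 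For Hodge type the natural candidate is $\Lambda=X_*(Z_H)^\Gamma$, acting by $p$-power central quasi-isogenies, with the torsion of $\pi_1(H)^\Gamma$ treated separately.
\item Deduce
\[
\RZ_H=\bigsqcup_{c\in c_{b_H,\mu_H}\pi_1(H)^\Gamma}\RZ_H^{c},
\]
with $\pi_1(H)^\Gamma$ permuting the pieces simply transitively, as in Shen's definition of $\RZ_G$ via $J_{b_G}(\Q_p)/J_{b_G}(\Q_p)^+$.
\item Conclude that the quotient is canonically identified with the single piece $\RZ_H^+$. Here the quotient means the quotient by the free action permuting components.
\end{enumerate}

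\textbf{Step 2: $\RZ_{H^{\ad}}/\pi_1(H^{\ad})^\Gamma\cong\RZ_{H^{\ad}}^+$.} Apply the same argument to the abelian-type datum $H^{\ad}$, using Shen's construction $\RZ_{H^{\ad}}=J_{b_{H^{\ad}}}(\Q_p)\RZ_{H^{\ad}}^+$. By construction, $\RZ_{H^{\ad}}^+=\RZ_H^+$, which links the two ends of the chain.

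As a cross-check, use \cite[Proposition 4.9]{ShenAbelianRZ}, which is available under the surjectivity hypothesis: $\RZ_{H^{\ad}}\cong\RZ_H/X_*(Z_H)^\Gamma$. The surjection $\pi_1(H)^\Gamma\twoheadrightarrow\pi_1(H^{\ad})^\Gamma$, together with the exact sequence relating $X_*(Z_H)$, $\pi_1(H)$ and $\pi_1(H^{\ad})$, gives
\[
\RZ_H/\pi_1(H)^\Gamma\cong\bigl(\RZ_H/X_*(Z_H)^\Gamma\bigr)/\pi_1(H^{\ad})^\Gamma\cong\RZ_{H^{\ad}}/\pi_1(H^{\ad})^\Gamma.
\]
Here surjectivity is exactly what ensures that $\pi_1(H^{\ad})^\Gamma$ permutes the components of $\RZ_{H^{\ad}}$ transitively, so that the quotient is a single $+$-piece. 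It also ensures that \cite[Corollary 2.6]{ShenAbelianRZ} matches the $+$-components.

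\textbf{Main obstacle.} The hard part is making the action of $\pi_1(H)^\Gamma$ (rather than of $J_{b_H}(\Q_p)$) well-defined and free. If $\pi_1(H)^\Gamma$ has torsion, or $J_{b_H}(\Q_p)^+$ is not a normal complement, it is not immediate that the group acts on $\RZ_H$. I would handle this by first interpreting the quotient as $\RZ_H/J_{b_H}(\Q_p)^{\mathrm{ker}}$, where $J^{\mathrm{ker}}$ is the kernel of $\kappa$. Then $J_{b_H}(\Q_p)/J^{\mathrm{ker}}$ embeds in $\pi_1(H)^\Gamma$ and the action becomes well-defined. Finally I would verify freeness using that $\omega_H$ is equivariant with respect to $\kappa_H$, and identify the image with all of $\pi_1(H)^\Gamma$ via the transitivity in \cite[Theorem 2.1]{ShenAbelianRZ}.
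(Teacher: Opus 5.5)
Your route is the paper's. You identify the fibres of $\omega_H$ via an action of $\pi_1(H)^\Gamma$ lifting translation on $c_{b_H,\mu_H}\pi_1(H)^\Gamma$, so that $\RZ_H^+\cong\RZ_H/\pi_1(H)^\Gamma$. You then combine Shen's Proposition~4.9 with the sequence $0\to X_*(Z_H)^\Gamma\to\pi_1(H)^\Gamma\to\pi_1(H^{\ad})^\Gamma\to 0$, which is exact by the surjectivity hypothesis; your ``cross-check'' is exactly the paper's derivation of the last isomorphism. You rightly isolate the crux, namely the existence of that action. The paper simply posits lifts $R'_g$ of the translations $R_g$ with Cartesian squares over $\omega_H$. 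However, both devices you offer for constructing it fail.

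First, $X_*(Z_H)^\Gamma$ does not exhaust $\pi_1(H)^\Gamma$ up to torsion. By the sequence above, its cokernel is $\pi_1(H^{\ad})^\Gamma$, which is finite but usually nonzero even when $\pi_1(H)^\Gamma$ is torsion-free. For $H=\GL_n$ in the Lubin--Tate case, $\pi_1(H)^\Gamma=\Z$ while the central element $p$ has Kottwitz invariant $n$, so $\RZ_H/p^{\Z}$ still has $n$ pieces. The lattice that works there is $\Pi^{\Z}$, with $\Pi$ a uniformiser of the division algebra $D$ of invariant $1/n$. In general $\RZ_H/X_*(Z_H)^\Gamma\cong\RZ_{H^{\ad}}$ has $|\pi_1(H^{\ad})^\Gamma|$ pieces, which is precisely why the statement needs a second quotient.

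Second, $J^{\ker}=\ker(\kappa_H|_{J_{b_H}(\Q_p)})$ is the stabiliser of $\RZ_H^+$. It contains open compact subgroups of $J_{b_H}(\Q_p)$ (e.g.\ $\mathcal{O}_D^\times$) acting non-trivially on $\RZ_H^+$. Dividing by $J^{\ker}$ and then by $J_{b_H}(\Q_p)/J^{\ker}$ gives $\RZ_H/J_{b_H}(\Q_p)=\RZ_H^+/J^{\ker}$, which collapses $\RZ_H^+$ rather than recovering it.

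Since the lemma only asserts abstract isomorphisms, transport of structure suffices. Write $\pi_1$ additively, put $x_0=c_{b_H,\mu_H}$, and assume every fibre of $\omega_H$ is a $J_{b_H}(\Q_p)$-translate of $\RZ_H^+$, i.e.\ $\kappa_H(J_{b_H}(\Q_p))=\pi_1(H)^\Gamma$. The identity $X^H_{\mu_H}(b_H)=J_{b_H}(\Q_p)X^H_{\mu_H}(b_H)^+$ only gives transitivity on the non-empty fibres, not the surjectivity of $\omega_H$ claimed in your Step~1(1); the paper's Cartesian square tacitly needs this assumption too.

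For each $x$ pick $j_x$ with $\kappa_H(j_x)=x-x_0$, and set $R'_g:=j_{x+g}\,j_x^{-1}$ on $\omega_H^{-1}(x)$. This is an action of $\pi_1(H)^\Gamma$ permuting the fibres simply transitively, hence free with quotient $\RZ_H^+$. Now choose the $j_x$ more carefully. Let $\bar x$ be the image of $x-x_0$ in $\pi_1(H^{\ad})^\Gamma$, let $s$ be a set-theoretic section of $\pi_1(H)^\Gamma\to\pi_1(H^{\ad})^\Gamma$, and write $x-x_0=s(\bar x)+\lambda_x$ with $\lambda_x\in X_*(Z_H)^\Gamma$. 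Take $j_x=t_{\bar x}\,\lambda_x(p)$ with $\kappa_H(t_{\bar x})=s(\bar x)$. Then the restriction of the action to $X_*(Z_H)^\Gamma$ is the central action of Shen's Proposition~4.9. The quotient can therefore be taken in stages as in your cross-check, and the same device makes your direct Step~2 for $H^{\ad}$ rigorous.
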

\begin{proof}
    By construction, $\RZ_G^+ = \RZ_H^+$. The right-action of $\pi_1(H)^{\Gamma}$ on $c_{b_H, \mu_H}\pi_1(H)^{\Gamma}$ induces an action on $\RZ_H$. More precisely, any $g \in \pi_1(H)^{\Gamma}$ induces an isomorphism 
    \begin{equation*}
        R_g:c_{b_H, \mu_H}\pi_1(H)^{\Gamma}\xrightarrow{\sim} c_{b_H, \mu_H}\pi_1(H)^{\Gamma}.
    \end{equation*}
    This induces a Cartesian diagram 
    
    \[\begin{tikzcd}
	{\RZ_H} & {\RZ_H} \\
	{c_{b_H, \mu_H}\pi_1(H)^{\Gamma}} & {c_{b_H, \mu_H}\pi_1(H)^{\Gamma}}
	\arrow["{R_g'}", from=1-1, to=1-2]
	\arrow["{\omega_H}", from=1-1, to=2-1]
	\arrow["{\omega_H}", from=1-2, to=2-2]
	\arrow["{R_g}", from=2-1, to=2-2]\\
    \end{tikzcd}\]
    The isomorphism $R_g'$ is the induced action of $g$ on $\RZ_H$.
    
    In particular, all fibres of $\omega_H$ are identified under the action of $\pi_1(H)^{\Gamma}$. Thus, $\RZ^+_G = \RZ^+_H \cong \RZ_H/\pi_1(H)^{\Gamma}$.
    By \cite[Proposition 4.9]{ShenAbelianRZ}, we have 
    \begin{equation*}
        \RZ_H/X_*(Z_H)^{\Gamma} \cong \RZ_{G^{\ad}}.
    \end{equation*}
    By \cite[Lemma 1.5]{Borovoi1989preprint}, we have a short exact sequence:
    \[0 \longrightarrow X_*(Z_H) \longrightarrow \pi_1(H) \longrightarrow \pi_1(H^{\ad}) \longrightarrow 0\]
    We apply the left exact functor $M \mapsto M^{\Gamma}$ in conjunction with our assumption $\pi_1(H)^{\Gamma} \twoheadrightarrow \pi_1(H^{\ad})^{\Gamma}$ to get
    \[0 \longrightarrow X_*(Z_H)^{\Gamma} \longrightarrow \pi_1(H)^{\Gamma} \longrightarrow \pi_1(H^{\ad})^{\Gamma} \longrightarrow 0\]
    In particular, we see that
    \begin{align*}
    \RZ_G^+ = \RZ_H^+ \cong \RZ_H/\pi_1(H)^{\Gamma} \cong (\RZ_H/X_*(Z_H)^{\Gamma})/\pi_1(H^{\ad})^{\Gamma} \cong \RZ_{G^{\ad}}/\pi_1(H^{\ad})^{\Gamma}.
    \end{align*}
\end{proof}

\begin{remark} \label{rmk: LES vanishing}
    The condition that the map $\pi_1(H)^\Gamma \to \pi_1(H^{\ad})^\Gamma$ is a surjection is true for many unramified reductive groups $H$. For example, it is true if $H$ is a similitude group. This includes $\GL_n,\mathrm{GSp}_{2n}, \mathrm{GSpin}_{n}$, etc. However, the map $\pi_1(H)^\Gamma \to \pi_1(H^{\ad})^\Gamma$ fails to be surjective if $G=\SL_n$.
\end{remark}

\section{Weil descent datum}\label{Sec: 6}

In this subsection, we discuss how the Weil descent data work for Rapoport--Zink spaces of abelian type. In particular, $\mathrm{H}^\bullet(\RZ_G^\infty)$ will be a virtual representation of $G(\Q_p) \times \mathcal{W}_E$, where $\mathcal{W}_E$ is the Weil group associated to the reflex field $E = E(\mu_G)$ of the local Shimura datum of abelian type $(G, [b_G], \{\mu_G\})$. We first recall the formalism of Weil descent datum in terms of the moduli space of $p$-adic local shtukas as in \cite{PappasRapoport2024integral}, which we include for the sake of completeness. Following that, we recount Kim's construction for the specific case of local Shimura data of Hodge type, as in \cite{kim2018rapoport}. We then define the Weil descent datum for local Shimura data of abelian type, and show that it agrees with the Pappas-Rapoport construction.

\subsection{Weil descent datum by $p$-adic local shtukas}\label{sec:RP-Weil}

In the following, let $\cG$ be a parahoric group scheme over $\Z_p$ with generic fiber $G$. In many cases, we shall assume $\cG$ is hyperspecial (and $G$ is thus unramified).
Let $\cM^{\mathrm{int}}_{(\cG, [b_G], \{\mu_G\})}$ be the integral moduli space of shtukas associated to an integral local Shimura datum $(G,[b_G],\{\mu_G\})$ and $\cG$ as defined in \cite[Lecture 25.1]{BerkeleyLectures}. It is the functor that sends $S \in \mathrm{Perfd}_k$ (here $k$ is the residue field of $\breve{E}$) to the set of isomorphism classes of tuples $(S^\#, \mathscr{P}, \phi_{\mathscr{P}}, i_r)$ where:
\begin{enumerate}
    \item $S^\#$ is an untilt of $S$ over $\mathrm{Spa}(O_{\breve{E}})$,
    \item $(\mathscr{P}, \phi_{\mathscr{P}})$ is a $\mathcal{G}$-shtuka over $S$ with one leg along $S^\#$ bounded by $\mu$,
    \item $i_r: G_{\mathcal{Y}_{[r,\infty)}(S)}\xrightarrow{\cong}\mathscr{P}|_{\mathcal{Y}_{[r,\infty)}(S)} $ is a framing isomorphism of $G$-torsors for large enough $r$, under which $\phi_{\mathscr{P}}$ is identified with $\phi_b = b \times \mathrm{Frob}_S$.
\end{enumerate}

In \cite[\S 3.1,\S 3.2]{PappasRapoport2024integral}, Pappas and Rapoport define a natural Weil datum on $\cM^{\mathrm{int}}_{(\cG, [b_G], \{\mu_G\})}$ from $\cO_{\breve{E}}$ to $\cO_{E}$, which we recall now.
Let $\tau$ be the relative Frobenius automorphism of $\breve{E}$ over $E$, and define the $v$-sheaf $\cM^{\mathrm{int},(\tau)}_{(\cG, [b_G], \{\mu_G\})}$ by 
\begin{equation*}
    \cM^{\mathrm{int},(\tau)}_{(\cG, [b_G], \{\mu_G\})}(S):=\cM^{\mathrm{int},(\tau)}_{(\cG, [b_G], \{\mu_G\})}(S\times_{\Spa(k),\tau}\Spa(k)).
\end{equation*}

Let $S = \mathrm{Spa}(R, R^+)$, with $k$-algebra structure $\varepsilon: k \to R$. Let $R_{[\tau]}$ be the same ring with $k$-algebra structure $ k \xrightarrow{x \mapsto x^q} k \xrightarrow{\varepsilon} R$ for $q = |\kappa_E| = p^d$. In particular, $\mathrm{Spa}(R_{[\tau]}, R^+_{[\tau]}) = S \times_{\mathrm{Spa}(k),\tau} \mathrm{Spa}(k)$. Then the Weil datum $\Phi^{\mathrm{PR}}$ is:
\begin{align*}
    w: \cM^{\mathrm{int}}_{(\cG, [b_G], \{\mu_G\})} \longrightarrow \cM^{\mathrm{int}, (\tau)}_{(\cG, [b_G], \{\mu_G\})} 
\end{align*}
which sends a point $(S^\#, \mathscr{P}, \phi_{\mathscr{P}}, i_r)$ valued in $S$ to a point  $(S^\#, \mathscr{P}, \phi_{\mathscr{P}}, i'_{qr})$ valued in $\mathrm{Spa}(R_{[\tau]}, R^+_{[\tau]})$. The new framing $i'_{qr}$ is defined as the composition
\begin{align*}
    G_{\mathcal{Y}_{[qr, \infty)}(R_{[\tau]}, R^+_{[\tau]})} \xrightarrow{(\phi_b)^{d}} G_{\mathcal{Y}_{[r,\infty)}(R, R^+)} \xrightarrow{\cong}\mathscr{P}|_{\mathcal{Y}_{[r,\infty)}(R, R^+)}\xrightarrow[\cong]{\phi}\mathscr{P}|_{\mathcal{Y}_{[qr,\infty)}(R_{[\tau]}, R^+_{[\tau]})}.\\
\end{align*}

\subsection{Weil descent data via Shen's construction}

There is yet another natural way of constructing Weil descent data on $\RZ_G$ following Shen's construction of $\RZ_G$. The idea is as follows: A Weil descent datum in the case $G=\GL_n$ is discussed in \cite{rapoport1996period}. We can define a Weil descent datum on a Rapoport--Zink space $\RZ_H$ of Hodge type by restriction along a Hodge embedding $H\hookrightarrow \GL_n$. Assuming $\RZ_H$ is associated to $\RZ_G$, we can then "transfer" this Weil descent datum from $\RZ_H$ to $\RZ_G$. We note that the above construction recovers the case of local Shimura varieties and Rapoport--Zink spaces of Hodge type from the local Shimura datum $(H, [b_H], \{\mu_H\})$.

We first review the Weil descent data in the Hodge type case as constructed in \cite[\S 7.3]{kim2018rapoport}. Assume $(H,[b_H],\{\mu_H\})$ is an unramified local Shimura datum of Hodge type, and fix a \textit{Hodge embedding} $\iota: (H,[b_H],\{\mu_H\}) \to (\GL_n,[b'],\{\mu'\})$, i.e., $\iota$ is a map of local Shimura data induced by a closed embedding $\iota:H \to \GL_n$ of groups. Then by the construction of $\RZ_H$ in \cite{kim2018rapoport}, we get a closed immersion of formal groups $\iota_{\RZ}:\RZ_H\to\RZ_{\GL_n}$. The latter space $\RZ_{\GL_n}$ has a natural and explicit Weil descent datum in terms of $p$-divisible groups. Kim in \cite{kim2018rapoport} then defines the Weil descent datum on $\RZ_H$ to be the restriction of that on $\RZ_{\GL_n}$ along the closed immersion $\iota_{\RZ}$. We denote this Weil descent datum as 
\begin{equation*}
    \Phi^{\mathrm{Kim}}_{(H,\iota)}:\RZ_H\xrightarrow{\sim} \RZ_H^{(\tau)},
\end{equation*}
where, as before, $\tau$ is the relative Frobenius on $\breve{E}$ over $E$, and $\RZ_H^{(\tau)}:=\RZ_H\times_{\Spf\cO_{\breve{E}},\tau}\Spf\cO_{\breve{E}}$. As the notation suggests, $\Phi^{\mathrm{Kim}}_{(H,\iota)}$ a priori depends on the choice of Hodge embedding $\iota$. We check in the next lemma that it is independent of this choice.

\begin{lemma}\label{lem:independence-Hodge-Weil}
    The Weil descent datum $\Phi^{\mathrm{Kim}}_{(H,\iota)}$ is independent of the Hodge embedding $\iota:H\hookrightarrow \GL_n$.
\end{lemma}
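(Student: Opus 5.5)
The plan is to compare both Weil descent data with an intrinsic one. The natural candidate is the Pappas--Rapoport datum $\Phi^{\mathrm{PR}}$ of \S\ref{sec:RP-Weil}, which depends only on $(H,[b_H],\{\mu_H\})$ and the hyperspecial model. Concretely, I would show that for every Hodge embedding $\iota$ the datum $\Phi^{\mathrm{Kim}}_{(H,\iota)}$ agrees with $\Phi^{\mathrm{PR}}$ under the identification of $\RZ_H$ (through its diamond) with $\cM^{\mathrm{int}}_{(\cG,[b_H],\{\mu_H\})}$. Independence of $\iota$ then follows at once. It is enough to compare the two isomorphisms on the associated $v$-sheaves. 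The functor $\mathfrak{X}\mapsto\mathfrak{X}^\diamond$ is fully faithful on formally smooth, locally formally of finite type formal schemes over $\Spf \breve{\Z}_p$, and equality can be checked on $(R,R^+)$-points for perfectoid $R$.

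First I would treat the case $\GL_n$. A point of $\RZ_{\GL_n}$ is a pair $(\mathcal{X},\iota_X)$, and the Rapoport--Zink Weil datum sends it to the same $p$-divisible group with the quasi-isogeny composed with the relative Frobenius $\mathrm{Frob}^{d}$ of the framing object, where $q=p^d$. Under the Scholze--Weinstein/Fargues dictionary, a $p$-divisible group over $R^\#$ corresponds to a $\GL_n$-shtuka with one leg. Under this dictionary the quasi-isogeny becomes the framing $i_r$, and composing with $\mathrm{Frob}^d$ of $X$ becomes precomposition with $\phi_b^d$, followed by $\phi$ to move to $\mathcal{Y}_{[qr,\infty)}$. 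This is exactly the formula for $i'_{qr}$, so $\Phi^{\mathrm{RZ}}_{\GL_n}=\Phi^{\mathrm{PR}}_{\GL_n}$.

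Second, I would use functoriality. A closed embedding $\iota:H\hookrightarrow\GL_n$ induces a map $\cM^{\mathrm{int}}_{H}\to\cM^{\mathrm{int}}_{\GL_n}$ by pushing out torsors. This map is compatible with $\Phi^{\mathrm{PR}}$, because the formula for $i'_{qr}$ is visibly functorial in the group: $b_H\mapsto b'$, and $\phi$ and $\phi_b^d$ commute with pushout. By the construction of \cite{kim2018rapoport}, together with the comparison of $\RZ_H^\diamond$ with $\cM^{\mathrm{int}}_H$ (as in \cite{PappasRapoport2024integral} or \cite[Lecture 25]{BerkeleyLectures}), this map is identified with $\iota_{\RZ}^\diamond$. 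Since $\iota_{\RZ}$ is a closed immersion, and hence a monomorphism on $v$-sheaves, the restriction of $\Phi^{\mathrm{PR}}_{\GL_n}$ along $\iota_{\RZ}$ is uniquely determined. It therefore equals $\Phi^{\mathrm{PR}}_H$. Combined with the first step, $\Phi^{\mathrm{Kim}}_{(H,\iota)}=\Phi^{\mathrm{PR}}_H$ for every $\iota$.

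The main obstacle is the compatibility in the second step. One must know that the identification $\RZ_H^\diamond\cong\cM^{\mathrm{int}}_H$ is itself independent of $\iota$, or at least compatible with the $\GL_n$ identification through $\iota$. Otherwise the argument is circular. I would settle this first: the tensors $(t_i)$ define the $H$-structure on the shtuka attached to $\mathcal{X}$, and two Hodge embeddings can be compared through their product embedding $H\hookrightarrow\GL_n\times\GL_m\hookrightarrow\GL_{n+m}$. Applying the argument to $\iota$, to $\iota'$ and to $\iota\oplus\iota'$, and using that the $\GL$-level data are compatible with direct sums, reduces the question to a direct check for the block-diagonal embedding.
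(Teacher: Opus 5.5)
Your route is correct, but it differs from the paper's and in fact reverses its logical order.

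\textbf{The paper's route.} The paper proves the lemma directly at the level of $p$-divisible groups. Given $\iota$ and $\iota'$, it passes to $\iota''=\iota\times\iota'$. It then observes that the natural maps between $\GL$-level Rapoport--Zink spaces commute with the explicit Weil data: the direct sum $\RZ_{\GL_n}\times\RZ_{\GL_m}\hookrightarrow\RZ_{\GL_{n+m}}$ and the projection to $\RZ_{\GL_n}$ (the paper phrases this loosely as an embedding $\GL_n\to\GL_{n+m}$). So restricting along $\iota''$ or along $\iota$ gives the same datum. The lemma is then used as input to Theorem~\ref{thm:wdd-PR-Kim}.

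\textbf{Your route.} You first establish $\Phi^{\mathrm{Kim}}_{(H,\iota)}=\Phi^{\mathrm{PR}}_H$ for every $\iota$, and you rightly reprove the $\GL_n$ case rather than citing Theorem~\ref{thm:wdd-PR-Kim}. The cost is importing three heavier inputs: the Scholze--Weinstein identification for $\GL_n$; Pappas--Rapoport representability of $\cM^{\mathrm{int}}_H$ by Kim's space, compatibly with $\iota$; and full faithfulness of $\mathfrak{X}\mapsto\mathfrak{X}^{\Diamond}$ on flat, normal formal schemes locally formally of finite type.

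\textbf{What it buys.} Your Step 2 (functoriality of $\Phi^{\mathrm{PR}}$ under pushout, plus injectivity of $\iota_*$) is exactly the ingredient the paper leaves tacit when it reduces Theorem~\ref{thm:wdd-PR-Kim} to $\GL_n$. Moreover, if the spaces for different embeddings are identified through the canonical isomorphism coming from both representing $\cM^{\mathrm{int}}_H$, independence is immediate.

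\textbf{Where it saves nothing.} Suppose instead one means the identification of $\RZ_{H,\iota''}$ with $\RZ_{H,\iota}$ induced by projection to $\RZ_{\GL_n}$, which is what the paper's argument uses. Then, as you note, you must still check that this matches the identifications with $\cM^{\mathrm{int}}_H$. Since $\iota_*$ is a monomorphism, this reduces to compatibility of the $\GL$-level identifications with products, direct sums and projections. That is the same kind of direct-sum check that makes up the paper's entire proof, so for this reading the detour through $\Phi^{\mathrm{PR}}$ does not save work.
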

\begin{proof}
    Let $(H,[b_H],\{\mu_H\})$ be an unramified local Shimura datum of Hodge type. Let $\iota: H \hookrightarrow \GL_n$ and $\iota':H\hookrightarrow \GL_m$ be two Hodge embeddings. We get a third Hodge embedding $\iota'':H\hookrightarrow\GL_{n+m}$ by taking the product $\iota'':=\iota\times\iota'$. 
    It then suffices to verify that the Weil descent data defined by $\iota$ and $\iota''$ coincide, which is clear since the induced embedding $f:\GL_n\to \GL_{n+m}$ commutes with Weil descent data on both sides.
\end{proof}

\begin{theorem}\label{thm:wdd-PR-Kim}
    The two Weil descent data $\Phi^{\mathrm{PR}}$ and $\Phi^{\mathrm{Kim}}$ coincide for (unramified) local Shimura data of Hodge type.
\end{theorem}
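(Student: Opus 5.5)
The plan is to compare both descent data on the diamond of $\RZ_H$ (or its generic fibre), using the comparison between Kim's Rapoport--Zink space and the integral moduli of shtukas. Pappas--Rapoport and Scholze--Weinstein (Berkeley Lectures, Lecture 25) give an isomorphism of $v$-sheaves
\begin{equation*}
    \RZ_H^{\diamond} \xrightarrow{\sim} \cM^{\mathrm{int}}_{(\cG_H,[b_H],\{\mu_H\})}
\end{equation*}
for hyperspecial $\cG_H$. A point $(\mathcal{X},\iota)$, where $\mathcal{X}$ carries the tensors $(\mathbf{t}_i)$, is sent to the $\cG_H$-shtuka obtained from the Breuil--Kisin--Fargues module of $\mathcal{X}$ with its tensors, and the framing comes from the quasi-isogeny $\iota$. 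Formal schemes in our setting are flat, normal and locally formally of finite type, so the functor $\sX\mapsto \sX^\diamond$ is fully faithful on them. It therefore suffices to show that $(\Phi^{\mathrm{Kim}}_{(H,\iota)})^\diamond$ and $\Phi^{\mathrm{PR}}$ agree under this isomorphism.

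The first step treats $\GL_n$. Rapoport--Zink's Weil descent datum on $\RZ_{\GL_n}$ sends $(\mathcal{X},\iota)$ over $R$ to $(\mathcal{X}, \iota\circ \mathrm{Frob}^{d}_{X})$ over $R_{[\tau]}$. Here $\mathrm{Frob}^{d}_{X}\colon X^{(\tau)}\to X$, or the analogous $d$-fold Frobenius quasi-isogeny, re-frames the fixed group $X$ through the $q$-power Frobenius. Under the dictionary between $p$-divisible groups and shtukas, the underlying shtuka $(\mathscr{P},\phi_{\mathscr{P}})$ is unchanged. The Frobenius of $X$ corresponds to $\phi_b=b\times\mathrm{Frob}$ on the trivial bundle, so the composite $d$-fold Frobenius is $(\phi_b)^d$. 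Composing with $\iota$ and transporting through $\phi_{\mathscr{P}}$ gives exactly the framing $i'_{qr}$ in the definition of $\Phi^{\mathrm{PR}}$. This comes down to checking that the Dieudonné-module Frobenius matches $\phi_b$ under the shtuka comparison (via Fargues' or Scholze--Weinstein's classification of $p$-divisible groups over $\mathcal{O}_C$), together with a compatibility of base change along $\tau$.

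The second step passes to Hodge type by functoriality. The Hodge embedding $\iota\colon H\hookrightarrow\GL_n$ induces a map of integral moduli of shtukas $\cM^{\mathrm{int}}_{\cG_H}\to\cM^{\mathrm{int}}_{\GL_n}$, given by pushing out torsors. This map is compatible with the closed immersion $\iota_{\RZ}$ under the isomorphisms above. It is also a monomorphism, since the shtukas come with tensors cutting out the $\cG_H$-reduction. The formula defining $\Phi^{\mathrm{PR}}$ is visibly functorial in the group: pushing out commutes with $\phi_b$ and with the re-framing. Hence $\Phi^{\mathrm{PR}}_{H}$ is the restriction of $\Phi^{\mathrm{PR}}_{\GL_n}$. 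By definition, $\Phi^{\mathrm{Kim}}_{(H,\iota)}$ is likewise the restriction of the $\GL_n$ datum. Both are therefore restrictions of the same map along a monomorphism, so they agree. By \Cref{lem:independence-Hodge-Weil}, the result holds for any choice of $\iota$.

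I expect the main obstacle to be the compatibility of the Kim/Rapoport--Zink space with the shtuka moduli space, including the tensors and the matching of framings, at the level needed to compare the Frobenius re-framings. In particular one must keep track of conventions: covariant versus contravariant Dieudonné theory, and $\mu$ versus $\sigma^{-1}(\mu^{-1})$. These could introduce an inverse or a twist by $\sigma$ between the two descent data. I would first fix conventions so that the $\GL_n$ case is an identity. I would then cite \cite{PappasRapoport2024integral}, where the comparison $\RZ_H^\diamond\cong\cM^{\mathrm{int}}$ is established, and check that their comparison isomorphism is natural in the Hodge embedding.
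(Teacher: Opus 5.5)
Your proposal is correct and follows the same route as the paper. Both reduce to $\GL_n$ and there match the Rapoport--Zink re-framing $\xi\mapsto \xi\circ\phi_b^{d}$ with the Pappas--Rapoport re-framing $i_r\mapsto i'_{qr}$ under the dictionary between $p$-divisible groups and shtukas. Your Step 2 is in fact more careful than the paper: the paper attributes the reduction to $\GL_n$ to \Cref{lem:independence-Hodge-Weil}, but the reduction actually needs what you supply, namely that $\Phi^{\mathrm{PR}}$ is functorial along the Hodge embedding and that $\RZ_H\to\RZ_{\GL_n}$ is a monomorphism.
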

\begin{proof}

By \Cref{lem:independence-Hodge-Weil}, it is enough to show this for the case $G=\GL_n$. 

The effect of the Weil descent datum on Kim's construction is as follows (using the convention of \cite{BerkeleyLectures}): $(X, \xi)\mapsto (X^{(\tau)}, \xi^{(\tau)}) = (X, \xi \circ \phi_b^{d})$, where $\xi: X \to  \mathbb{X}$ is the quasi-isogeny providing the framing structure, and $\xi^{(\tau)}: X \to \tau^*X \to \tau^*\mathbb{X} $. The point $(X, \xi)$ corresponds to a point in the moduli space of shtukas $\mathrm{Sht}_{(\mathcal{G}, [b_G], \{\mu_G\})}$ given by $(S^{\#}, \mathcal{P}, \varphi_{\mathcal{P}}, i_r)$ in Pappas-Rapoport notation. The point $(X^{(\tau)}, \xi^{(\tau)})$ corresponds to a point in $\mathrm{Sht}_{(\mathcal{G}, [b_G], \{\mu_G\})}$ with the framing modified by Frobenius to the $d$th power, given by $(S^{\#}, \mathcal{P}, \varphi_{\mathcal{P}}, i_r\circ \phi_b^{d})$. This agrees with the map in Pappas-Rapoport:
\begin{align*}
    \mathrm{Sht}_{(\mathcal{G}, [b_G], \{\mu_G\})} \longrightarrow \mathrm{Sht}_{(\mathcal{G}, [b_G], \{\mu_G\})}^{(\tau)} \\
    (S^{\#}, \mathcal{P}, \varphi_{\mathcal{P}}, i_r) \mapsto (S^{\#}, \mathcal{P}, \varphi_{\mathcal{P}}, i'_{r'})
\end{align*}
\end{proof}

\begin{theorem}\label{thm: wdd-Shen}
    Let $(G, [b_G], \{\mu_G\})$ be an unramified local Shimura datum of abelian type, $\RZ_G$ be its Rapoport--Zink space, and $E_G$ be its reflex field. 
    There exists a Weil descent datum $\Phi^{\mathrm{Shen}}$ for $\RZ_{G}$ relative to some finite extension $E^{\mathrm{opt}}$ of $E_G$, which depends only on $(G, [b_G], \{\mu_G\})$. When $\RZ_{(G, [b_G], \{\mu_G\})}$ is of Hodge type, then $E^{\mathrm{opt}}=E_G$, and we recover $\Phi^{\mathrm{Shen}} = \Phi^{\mathrm{Kim}}$.
\end{theorem}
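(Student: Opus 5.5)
The plan is to transfer $\Phi^{\mathrm{Kim}}$ from an auxiliary Hodge-type datum to $\RZ_G$, using the decomposition $\RZ_G = J_{b_G}(\Q_p)\RZ_G^+ \cong \bigsqcup_{J_{b_G}(\Q_p)/J_{b_G}(\Q_p)^+}\RZ_G^+$ with $\RZ_G^+ = \RZ_H^+$.

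First, choose an unramified Hodge-type datum $(H,[b_H],\{\mu_H\})$ with the same adjoint datum. Consider $\Phi^{\mathrm{Kim}}_H:\RZ_H\to \RZ_H^{(\tau_H)}$, where $\tau_H$ is the relative Frobenius of $\breve E_H/E_H$; by \Cref{lem:independence-Hodge-Weil} it does not depend on the Hodge embedding. The descent datum need not preserve the connected piece $\RZ_H^+$. On the special fibre, via $\overline{\RZ}_H^{\mathrm{perf}}\cong X^H_{\mu_H}(b_H)$, it acts by $g\mapsto b_H\sigma(b_H)\cdots\sigma^{d-1}(b_H)\sigma^d(g)$. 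Applying $\omega_H$, this translates the fibre over $c$ by $\kappa_H(b_H)+\sigma(\kappa_H(b_H))+\cdots$ modulo $(1-\sigma)$. It therefore carries $\RZ_H^+$ to the fibre over some element $c'\in c_{b_H,\mu_H}\pi_1(H)^\Gamma$.

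Next, choose $E^{\opt}\supset E_G E_H$ finite unramified of degree $d'$ so that this translation becomes compatible with the structure used to glue. Concretely, take $d'$ divisible enough that the image of $c'-c_{b_H,\mu_H}$ in $\pi_1(H^{\ad})^\Gamma$, hence in $\pi_1(G^{\ad})^\Gamma$, is realized by an element of $J_{b_G}(\Q_p)$. Its action on $\RZ_G$ then moves $\RZ_G^+$ back to itself. Composing $(\Phi^{\mathrm{Kim}}_H)^{d'/d}$ with the inverse of this element gives a descent datum $\RZ_G^+\to(\RZ_G^+)^{(\tau')}$ that commutes with $J_{b_G}(\Q_p)^+$. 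I then extend it $J_{b_G}(\Q_p)$-equivariantly over the disjoint union to get $\Phi^{\mathrm{Shen}}$.

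The key checks, in order, are as follows.
\begin{enumerate}
\item Well-definedness: the choice of $j\in J_{b_G}(\Q_p)$ and the coset representatives must not matter. This uses that $\RZ_G^+$ is stable exactly under $J_{b_G}(\Q_p)^+$ and that $\Phi^{\mathrm{Kim}}$ is $J_{b_H}(\Q_p)$-equivariant up to the Frobenius twist.
\item Independence of $H$: for two choices $H_1,H_2$, I would pass to the fibre product over $G^{\ad}$ (or to $H_1\times H_2$ with the diagonal adjoint datum). I would then argue, as in \Cref{lem:independence-Hodge-Weil}, that the isomorphism $\RZ_{H_1}^+\cong\RZ_{H_2}^+$ intertwines the Kim data, since both are restrictions along Hodge embeddings into a common $\GL$.
\item Minimality: define $E^{\opt}$ as the smallest extension for which such a $j$ exists. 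This depends only on the image of the class in $\pi_1(G^{\ad})^\Gamma$ modulo the image of $J_{b_G}(\Q_p)$, hence only on $(G,[b_G],\{\mu_G\})$.
\item Hodge case: when $G$ itself is of Hodge type, take $H=G$. Then $\Phi^{\mathrm{Kim}}$ is already defined on all of $\RZ_G$, no correction is needed, so $E^{\opt}=E_G$ and the construction returns $\Phi^{\mathrm{Kim}}$, since the $J$-equivariant extension of its restriction to $\RZ_G^+$ is itself.
\end{enumerate}

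I expect the main obstacle to be the correction step. One must show that the $\pi_1$-translation induced by Frobenius can be undone by an element of $J_{b_G}(\Q_p)$ after a finite unramified extension, and that the resulting composite is an honest Weil descent datum, i.e.\ compatible with the formal-scheme structure and not only on perfected special fibres. I would first verify the special-fibre statement by the Kottwitz-map computation above. I would then lift to formal schemes using that $\RZ_G^+$ is formally smooth and that automorphisms are determined on the reduced locus via rigidity of quasi-isogenies on $\RZ_H$.
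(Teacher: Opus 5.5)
Your correction step has a genuine gap, for three reasons.

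First, the composite of $(\Phi^{\mathrm{Kim}}_H)^{d'/d}$ with $j^{-1}$ is not defined as written. The element $j\in J_{b_G}(\Q_p)$ acts on $\RZ_G=J_{b_G}(\Q_p)\RZ_G^+$. But $(\Phi^{\mathrm{Kim}}_H)^{d'/d}$ carries $\RZ_H^+$ into a \emph{different} $\omega_H$-fibre of $\RZ_H$, which is not a priori a piece of $\RZ_G$. You need a dictionary between the $\omega_H$-fibres of $\RZ_H$ and the components of $\RZ_G$. The paper gets it from $\RZ_{G^{\ad}}\cong\RZ_H/X_*(Z_H)^{\Gamma}$ and $\RZ_G\cong\beta_G^*(\RZ_{G^{\ad}})$, which is where the surjectivity of $\pi_1(H)^\Gamma\to\pi_1(H^{\ad})^\Gamma$ is used.

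Second, even with such a dictionary, the equivariance claim fails. If $\Phi$ commutes with $J_{b_G}(\Q_p)$, then $\Psi:=j^{-1}\circ\Phi$ satisfies $\Psi\circ k=(j^{-1}kj)\circ\Psi$ for $k\in J_{b_G}(\Q_p)^+$. So $\Psi$ commutes with $J_{b_G}(\Q_p)^+$ only if conjugation by $j$ acts trivially on the action of $J_{b_G}(\Q_p)^+$ on $\RZ_G^+$. For a general $j$, the component-preserving extension $[g,x]\mapsto[g,\Psi(x)]$ is not well defined.

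Third, your criterion for $d'$ never bites. Once $E_GE_H\subseteq E^{\mathrm{opt}}$, the element $\sum_{i<d'}\sigma^i\mu_G$ is the $\omega_G$-displacement of $g\mapsto b_G\sigma(b_G)\cdots\sigma^{d'-1}(b_G)\sigma^{d'}(g)$ on $X^G_{\mu_G}(b_G)$. By Shen's decomposition $X^G_{\mu_G}(b_G)=J_{b_G}(\Q_p)X^G_{\mu_G}(b_G)^+$, it equals $\kappa_G(j)$ for some $j\in J_{b_G}(\Q_p)$, and it maps to your shift in $\pi_1(G^{\ad})^\Gamma$. So the ``minimal'' field of your step 3 is just $E_GE_H$, which depends on $H$. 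What would actually constrain $d'$ is realizing the shift by an element commuting with the $J_{b_G}(\Q_p)^+$-action, and that is not the condition you impose.

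Your step 4 is also inconsistent with your construction. From $(\sigma-1)\kappa(g)=\mu-\kappa(b)$ on $X_\mu(b)$, a descent datum relative to an unramified $E$ of degree $d$ displaces $\omega$ by $\sum_{i=0}^{d-1}\sigma^i\mu$ (up to sign convention). This is typically nonzero; in the Lubin--Tate case it changes the height by one. So the restriction of $\Phi^{\mathrm{Kim}}$ to $\RZ_G^+$ does not land in $(\RZ_G^+)^{(\tau)}$. Your recipe then outputs $j^{-1}\circ\Phi^{\mathrm{Kim}}$ for some $j$ with $\kappa_G(j)=\sum_{i<d}\sigma^i\mu_G$, not $\Phi^{\mathrm{Kim}}$.

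The paper's route avoids all of this because it never undoes the displacement. It passes $\Phi_H$ to $\RZ_{H^{\ad}}\cong\RZ_H/X_*(Z_H)^\Gamma$ by taking quotients. It then self-composes up to $E^{\mathrm{opt}}=E^{\mathrm{Hdg}}E_G$, with $E^{\mathrm{Hdg}}$ the intersection of the $E_H$ so that the field does not depend on $H$. Finally it pulls the result back along $\beta_G$ via $\RZ_G\cong\beta_G^*(\RZ_{G^{\ad}})$. No restriction to $\RZ^+$ and no correcting element from $J_{b_G}(\Q_p)$ are involved.
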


\begin{proof}

For a local Shimura datum $(H, [b_H], \{\mu_H\})$, write $E_H$ for its reflex field. Let $E^{\mathrm{Hdg}}$ be the intersection of all $E_H$ for all $(H, [b_H], \{\mu_H\})$ unramified of Hodge type associated to $(G,[b_G],\{\mu_G\})$, for each of which the map $\beta_H: \pi_1(H)^{\Gamma} \to \pi_1(H^{\ad})^\Gamma$ is a surjection. Let $E^{\mathrm{opt}}$ be the compositum of $E^{\mathrm{Hdg}}$ and $E_G$. Fix a datum $(H, [b_H], \{\mu_H\})$ from the family above such that $E_H=E^{\mathrm{Hdg}}$. From the compatibility between $\Phi^{\mathrm{PR}}$ and $\Phi^{\mathrm{Kim}}$, the action of the Weil descent datum $\tau_H$ must be equivariant with the morphism of \'etale sheaves $\omega_H: \RZ_H \to \pi_1(H)^{\Gamma}$. 

Write $\Phi_H:\RZ_H \xrightarrow{\sim} \RZ_H^{(\tau_H)}$ for the Weil descent datum relative to $E_H$ as in \Cref{thm:wdd-PR-Kim}. Since $\RZ_{H^\ad}\cong \RZ_H/X_*(Z_H)^\Gamma$, taking quotients by $X_*(Z_H)^\Gamma$ on both sides gives a Weil descent datum 
\begin{equation*}
    \Phi^{\ad}:\RZ_{H^{\ad}} \xrightarrow{\sim} \RZ_{H^{\ad}}^{(\tau_H)}
\end{equation*}
relative to $E_H$.
Note that since $E_{H^{\ad}}$ is contained in $E_{H}$ and both fields are unramified over $\Q_p$, the Frobenius $\tau_H$ is a multiple of $\tau_{H^{\ad}}$. Since $E^{\mathrm{opt}}$ is a finite unramified extension of $E_H$, self-composition of $\Phi^{\ad}$ yields a Weil descent datum 
\begin{equation*}
    \Phi^{\mathrm{opt}}:\RZ_{H^{\ad}} \xrightarrow{\sim} \RZ_{H^{\ad}}^{(\tau_{\mathrm{opt}})},
\end{equation*}
where $\tau_{\mathrm{opt}}$ is the Frobenius on $E^{\mathrm{opt}}$.
Let $\beta_G: \pi_1(G)^{\Gamma} \to \pi_1(G^{\ad})^\Gamma$ be the natural map. Since $\RZ_G \cong \beta_G^{*}(\RZ_{G^{\ad}})$ and $\RZ_{H^{\ad}}\simeq \RZ_{G^{\ad}}$, pulling back $\Phi^{\mathrm{opt}}$ along $\beta_G$ yields a Weil descent datum
\begin{align*}
    \Phi^{\mathrm{Shen}}:\RZ_G\xrightarrow{\sim}\RZ_G^{(\tau_{\mathrm{opt}})}.
\end{align*}
\end{proof}

In particular, this construction is equivariant with respect to the morphism of \'etale sheaves $\omega_G: \RZ_G \to \pi_1(G)^{\Gamma}$. Hence, $\varinjlim_{K} \mathrm{H}^i(\RZ_G^K)$ is endowed with a natural action of $\mathcal{W}_{E^{\mathrm{opt}}}$, the Weil group of $E^{\mathrm{opt}}$.

\begin{lemma}\label{lem:independence-Shen-Weil}
    The Weil descent datum $\Phi^{\mathrm{Shen}}_{(G,\iota)}$ is independent of the auxiliary choice of Hodge type datum $(H, [b_H], \{\mu_H\})$.
\end{lemma}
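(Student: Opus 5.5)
The plan is to reduce the comparison of two auxiliary Hodge-type data to a comparison through the adjoint datum, where everything is intrinsic to $G$. Let $(H_1,[b_{H_1}],\{\mu_{H_1}\})$ and $(H_2,[b_{H_2}],\{\mu_{H_2}\})$ be two unramified Hodge-type data associated to $(G,[b_G],\{\mu_G\})$. Both satisfy the surjectivity of $\beta_{H_i}$, and both have reflex field $E^{\mathrm{Hdg}}$ (or at least contained in $E^{\mathrm{opt}}$). By the proof of \Cref{thm: wdd-Shen}, $\Phi^{\mathrm{Shen}}$ is obtained in two steps. First, the Weil datum $\Phi^{\mathrm{Kim}}_{H_i}$ descends along $\RZ_{H_i}\to \RZ_{H_i}/X_*(Z_{H_i})^\Gamma\cong\RZ_{G^{\ad}}$ to a datum on $\RZ_{G^{\ad}}$, which is then iterated up to $\tau_{\mathrm{opt}}$. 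Second, the result is pulled back along $\beta_G$. The pullback step involves no choice. It therefore suffices to show that the two descended data $\Phi^{\mathrm{opt}}_{1},\Phi^{\mathrm{opt}}_2$ on $\RZ_{G^{\ad}}$ agree under the identification $\RZ_{H_1}/X_*(Z_{H_1})^\Gamma\cong\RZ_{G^{\ad}}\cong\RZ_{H_2}/X_*(Z_{H_2})^\Gamma$.

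The key step is to identify each descended datum with the intrinsic Pappas--Rapoport datum. By \Cref{thm:wdd-PR-Kim}, $\Phi^{\mathrm{Kim}}_{H_i}=\Phi^{\mathrm{PR}}_{H_i}$. In the shtuka description, $\Phi^{\mathrm{PR}}$ only modifies the framing by $\phi_b^{d}$. It is therefore functorial for morphisms of local Shimura data, in particular for $H_i\to H_i^{\ad}=G^{\ad}$. Since $p$ is odd and everything is unramified, the map $H_i\to G^{\ad}$ identifies $\RZ_{H_i}/X_*(Z_{H_i})^\Gamma$ with $\RZ_{G^{\ad}}$, and at the level of $v$-sheaves of generic fibres this is the map of shtuka moduli spaces induced by pushing out torsors along $H_i\to G^{\ad}$. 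Pushing out commutes with $i_r\mapsto \phi\circ i_r\circ\phi_b^{d}$. Hence the descended datum from $H_i$ equals $\Phi^{\mathrm{PR}}_{G^{\ad}}$ (iterated to $\tau_{\mathrm{opt}}$), and this is independent of $i$.

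To pass from generic fibres back to the formal schemes, I would use that $\RZ_{G^{\ad}}$ is flat and normal over $\breve{\Z}_p$, being locally a quotient of the formally smooth $\RZ_{H_i}$ by a discrete group acting freely on components. A morphism of such formal schemes is then determined by its restriction to the associated $v$-sheaf/diamond, by the full faithfulness result of Scholze--Weinstein \cite{BerkeleyLectures} for normal flat formal schemes. Two Weil data that agree as maps of $v$-sheaves therefore agree as maps of formal schemes.

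The main obstacle is the compatibility of the identification $\RZ_H/X_*(Z_H)^\Gamma\cong \RZ_{G^{\ad}}$ from \cite[Proposition 4.9]{ShenAbelianRZ} with the pushforward map of shtuka spaces. Shen's construction goes through ADLVs and connected components rather than through moduli interpretations. I would first check this on perfected special fibres, where both maps are induced by $H(\breve{\Q}_p)\to G^{\ad}(\breve{\Q}_p)$ on $X_{\mu}(b)$. I would then argue by rigidity: an isomorphism of formally smooth formal schemes lifting a given one on reduced special fibres, and compatible with the respective universal shtukas, is unique.
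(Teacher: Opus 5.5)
Your route differs from the paper's. The paper argues in one line: it invokes the canonical identification $\RZ_{H_1}^+\cong\RZ_{H_2}^+$ from the proof of \cite[Theorem 4.6]{ShenAbelianRZ} and transports the construction of \Cref{thm: wdd-Shen} along it. You instead try to identify the descended datum on $\RZ_{G^{\ad}}$ intrinsically with the iterated Pappas--Rapoport datum of $G^{\ad}$. Several of your steps are fine: the reduction to the adjoint datum, the compatibility of $\Phi^{\mathrm{PR}}$ with pushout of shtukas along $\mathcal{H}_i\to\mathcal{G}^{\ad}$ (including the bookkeeping for iterating to $\tau_{\mathrm{opt}}$), and the fact that a Weil datum on a flat formal scheme is determined by its $v$-sheaf. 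Completed, your argument would also give the comparison corollary that follows the lemma.

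The gap is exactly the step you flag, and the way you propose to close it fails. Without its last clause, your rigidity principle is false. Morphisms of formally smooth formal schemes over $\breve{\Z}_p$ are not determined by their effect on (perfected) reduced special fibres. Examples are $t\mapsto t+p$ on $\Spf W[[t]]$, or the action of $\mathcal{O}_D^\times\subset J_b(\Q_p)$ on the Lubin--Tate space, which fixes the closed point of each component. With the clause ``compatible with the universal shtukas'' the principle is true, but then the special-fibre check is superfluous, and the clause is precisely what must be proved. Moreover, Shen's $\RZ_{G^{\ad}}=J_{b_{G^{\ad}}}(\Q_p)\RZ_{H_0}^+$ carries a universal $G^{\ad}$-shtuka (equivalently, an identification $\RZ_{G^{\ad}}^{\Diamond}\cong\cM^{\mathrm{int}}_{(\mathcal{G}^{\ad},[b_{G^{\ad}}],\{\mu_{G^{\ad}}\})}$) only via pushout from some auxiliary $\RZ_{H_0}$. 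Checking that the pushout from $H_i$ matches it is therefore the original independence question again.

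What you actually need is that the identification $\RZ_{H_1}^+\cong\RZ_{H_2}^+$ used to compare the two versions of $\RZ_G$ commutes with the two pushout maps to $\cM^{\mathrm{int}}$ for $G^{\ad}$. This holds if that identification is induced by morphisms of Hodge-type data, e.g.\ through an auxiliary Hodge-type datum mapping to both. For such morphisms, Kim's map of Rapoport--Zink spaces and the pushout of shtukas agree on $v$-sheaves (by Pappas--Rapoport representability in the Hodge case plus faithfulness of $\mathfrak{X}\mapsto\mathfrak{X}^{\Diamond}$), and pushouts compose. So you need the same input the paper's proof cites, namely Shen's canonical identification, together with this functoriality check, which the paper also leaves implicit. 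With it, your special-fibre and rigidity step can be dropped.
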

\begin{proof}

Let $(H_1, [b_{H_1}], \{\mu_{H_1}\})$ and $(H_2, [b_{H_2}], \{\mu_{H_2}\})$ be two distinct auxiliary local Shimura data of Hodge type associated to the same local Shimura datum $(G, [b_G], \{\mu_G\})$ of abelian type. By the proof of \cite[Theorem 4.6]{ShenAbelianRZ}, we have a canonical identification $\RZ_{H_1}^+ \cong \RZ_{H_2}^+ =: \RZ_G^+$. The construction of the Weil descent datum in \Cref{thm: wdd-Shen} follows.
\end{proof}

Let $\Phi^{\mathrm{PR}}$ be the Weil descent datum on $\RZ_G$ relative to $E_G$ defined in \Cref{sec:RP-Weil}. Thus, $\Phi^{\mathrm{PR}}$ defines a Weil descent datum $\Phi^{\mathrm{PR}}\otimes_{E_G}E^{\mathrm{opt}}$ relative to the finite extension $E^{\mathrm{opt}}$.
\begin{corollary}
    The two Weil descent data $\Phi^{\mathrm{PR}}\otimes_{E_G}E^{\mathrm{opt}}$ and $\Phi^{\mathrm{Shen}}$ relative to $E^{\mathrm{opt}}$ coincide for (unramified) local Shimura data of abelian type. 
\end{corollary}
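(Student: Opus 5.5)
The plan is to reduce the comparison to the Hodge-type case, \Cref{thm:wdd-PR-Kim}, by tracing how $\Phi^{\mathrm{Shen}}$ was built in \Cref{thm: wdd-Shen}. We fix an auxiliary unramified Hodge-type datum $(H,[b_H],\{\mu_H\})$ with $\pi_1(H)^\Gamma\to\pi_1(H^{\ad})^\Gamma$ surjective and $E_H=E^{\mathrm{Hdg}}$. By \Cref{lem:independence-Shen-Weil}, the result does not depend on this choice. By construction, $\Phi^{\mathrm{Shen}}$ is obtained in three steps: take $\Phi^{\mathrm{Kim}}_H$ on $\RZ_H$, pass to the quotient by $X_*(Z_H)^\Gamma$ to get a datum on $\RZ_{H^{\ad}}\cong\RZ_{G^{\ad}}$, iterate it up to $E^{\mathrm{opt}}$, and then pull back along $\beta_G$. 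It therefore suffices to show that $\Phi^{\mathrm{PR}}$ is compatible with each of these operations.

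\textbf{Step 1.} On the shtuka side, the Pappas--Rapoport datum is functorial for morphisms of local Shimura data. For a morphism $f:(G_1,[b_1],\{\mu_1\})\to(G_2,[b_2],\{\mu_2\})$ of unramified data, the induced map $\cM^{\mathrm{int}}_{G_1}\to\cM^{\mathrm{int}}_{G_2}$ sends $(S^\#,\mathscr{P},\phi_{\mathscr{P}},i_r)$ to the pushed-out torsor with framing $f_*i_r$. Since $f(b_1)=b_2$, we have $f_*(\phi_{b_1})^d=(\phi_{b_2})^d$, so the new framing $i'_{qr}$ is carried to the new framing. The map is therefore equivariant for $\Phi^{\mathrm{PR}}$, after base change to a common field if the reflex fields differ.

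\textbf{Step 2.} We apply Step 1 to $H\to H^{\ad}\cong G^{\ad}$ and to $G\to G^{\ad}$. Using the identification of $\RZ_G^\diamond$, $\RZ_H^\diamond$, $\RZ_{G^{\ad}}^\diamond$ with the integral shtuka spaces (Scholze--Weinstein, Pappas--Rapoport), \Cref{thm:wdd-PR-Kim} gives $\Phi^{\mathrm{PR}}_H=\Phi^{\mathrm{Kim}}_H$. By Step 1, the quotient map $\RZ_H\to\RZ_{H^{\ad}}=\RZ_H/X_*(Z_H)^\Gamma$ intertwines $\Phi^{\mathrm{PR}}_H$ with $\Phi^{\mathrm{PR}}_{G^{\ad}}\otimes E_H$. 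Hence $\Phi^{\ad}=\Phi^{\mathrm{PR}}_{G^{\ad}}\otimes_{E_{G^{\ad}}}E_H$, and iterating gives $\Phi^{\mathrm{opt}}=\Phi^{\mathrm{PR}}_{G^{\ad}}\otimes E^{\mathrm{opt}}$. Here we use that base change of a Weil datum along an unramified extension of degree $m$ is exactly the $m$-fold self-composition.

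\textbf{Step 3.} Functoriality for $G\to G^{\ad}$ shows that $\RZ_G\to\RZ_{G^{\ad}}$ intertwines $\Phi^{\mathrm{PR}}_G\otimes E^{\mathrm{opt}}$ with $\Phi^{\mathrm{PR}}_{G^{\ad}}\otimes E^{\mathrm{opt}}$. The same holds for $\Phi^{\mathrm{Shen}}$, since it was defined by pullback along $\beta_G$. Both data are also compatible with $\omega_G$: Kottwitz invariants are preserved by $\Phi^{\mathrm{PR}}$ up to the Frobenius twist, which is the same twist built into the pullback. Because $\RZ_G\cong\beta_G^*(\RZ_{G^{\ad}})$ is a fibre product over the discrete sheaf $\pi_1(G)^\Gamma$, two isomorphisms $\RZ_G\to\RZ_G^{(\tau_{\mathrm{opt}})}$ that agree after projection to $\RZ_{G^{\ad}}$ and on the $\pi_1(G)^\Gamma$ component coincide. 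This gives $\Phi^{\mathrm{PR}}\otimes_{E_G}E^{\mathrm{opt}}=\Phi^{\mathrm{Shen}}$.

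The main obstacle I expect is Step 3 together with the identification underlying Step 2. One must check that Shen's isomorphism $\RZ_G\cong\beta_G^*(\RZ_{G^{\ad}})$, defined via ADLVs and connected components, agrees with the shtuka-theoretic map induced by $G\to G^{\ad}$. One must also check that the $\pi_1(G)^\Gamma$ components are permuted compatibly, since the framing change by $\phi_b^d$ shifts $\kappa_G$ by $d\cdot\kappa_G(b)$. I would first verify this on the special fibre using $\overline{\RZ}_G^{\mathrm{perf}}\cong X^G_{\mu_G}(b_G)$, where both data act as $g\mapsto (b\sigma)^d g$. Formal smoothness and rigidity of deformations (Grothendieck--Messing) would then upgrade agreement on the reduced locus to agreement of the formal schemes, because the two maps are isomorphisms of formally smooth formal schemes lifting the same map on special fibres and compatible with the Hodge-type ones locally.
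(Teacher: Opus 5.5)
Your proof is correct in outline but takes a different route from the paper. The paper argues in one line. Since $\RZ_G=J_{b_G}(\Q_p)\RZ_H^+$, the space is a union of $J_{b_G}(\Q_p)$-translates of the Hodge-type piece $\RZ_H^+$, so the comparison reduces directly to \Cref{thm:wdd-PR-Kim} on those pieces, without passing through $G^{\ad}$.

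You instead retrace the actual construction of $\Phi^{\mathrm{Shen}}$ in \Cref{thm: wdd-Shen}: the quotient by $X_*(Z_H)^\Gamma$, the iteration up to $E^{\mathrm{opt}}$, and the pullback along $\beta_G$. You use functoriality of $\Phi^{\mathrm{PR}}$ along $H\to H^{\ad}$ and $G\to G^{\ad}$, together with the universal property of $\RZ_G\cong\beta_G^*(\RZ_{G^{\ad}})$ as a fibre product over a discrete sheaf.

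The paper's route buys brevity. Yours makes explicit what is actually being used: Shen's structural isomorphisms must agree with the shtuka-theoretic functoriality maps, and both data must act the same way on $\omega_G$. The paper's reduction relies silently on the same inputs. It also needs $J_{b_G}(\Q_p)$-equivariance of both data, because neither datum preserves $\RZ_H^+$; each shifts the $\omega_G$-fibres.

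One caution about how you propose to settle that obstacle. Agreement on $\overline{\RZ}_G^{\mathrm{perf}}\cong X^G_{\mu_G}(b_G)$ together with formal smoothness does not force two morphisms of formal schemes to coincide. Formal smoothness gives existence of lifts, not uniqueness: an automorphism of $\Spf W[[t]]$ can be the identity on the reduced locus without being the identity. Grothendieck--Messing rigidity concerns quasi-isogenies of $p$-divisible groups, which are not available for $G$ or $G^{\ad}$ when these are not of Hodge type.

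The special-fibre check is enough for the locally constant invariant $\omega_G$. For the maps themselves you should compare on $v$-sheaves. By \cite[Proposition 18.4.1]{BerkeleyLectures}, already used in \Cref{lem:closed-immersion-formal-schemes}, morphisms between flat normal formal schemes locally formally of finite type are determined by their $v$-sheaf maps. The needed compatibility then becomes a single statement: the identification of Shen's $\RZ_G$ with the integral shtuka space is compatible, on the $+$-pieces, with the natural maps $\cM^{\mathrm{int}}_H\to\cM^{\mathrm{int}}_{G^{\ad}}\leftarrow\cM^{\mathrm{int}}_G$.
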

\begin{proof}

Since $\RZ_G = J_{b_G}(\Q_p) \RZ_H^+$ for some auxiliary choice of Hodge type datum $(H, [b_H], \{\mu_H\})$, it suffices to show the statement for the Hodge case. This follows from \Cref{thm:wdd-PR-Kim}. \end{proof}

\section{Relations to the Hodge case for Hodge--Newton reducibility}\label{Sec: 7}

Recall our strategy is to use the known results of Harris--Viehmann in the Hodge--Newton reducible Hodge type case to deduce results for the Hodge--Newton reducible abelian type case. The following lemma shows that we can always find an associated local Shimura datum of Hodge type that is Hodge--Newton reducible. 

\begin{lemma} \label{lem: HN ab Hodge}
    If a local abelian type Shimura datum $(G, [b_G], \{ \mu_G\})$ is Hodge--Newton reducible, then there exists an associated local Shimura datum $(H, [b_H], \{ \mu_H\})$ of Hodge type that is also Hodge--Newton reducible.
\end{lemma}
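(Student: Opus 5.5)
The plan is to take any associated unramified Hodge-type datum $(H,[b_H],\{\mu_H\})$ with $(G^{\ad},[b_{G^{\ad}}],\{\mu_{G^{\ad}}\})\cong(H^{\ad},[b_{H^{\ad}}],\{\mu_{H^{\ad}}\})$. We then transport the Hodge--Newton decomposition of $G$ through the adjoint group to $H$. The observation behind this is that the three conditions of \Cref{def:HN-reducible} are essentially adjoint-group conditions. Parabolic and Levi subgroups of a reductive group correspond bijectively to those of its adjoint quotient, via image and preimage. The adjoint action on $\mathrm{Lie}(U)$ factors through the adjoint group. Finally, the Newton point and the Hodge cocharacter push forward compatibly along $G\to G^{\ad}$, up to central contributions that act trivially on $\mathrm{Lie}(U)$.

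Concretely, let $P\supset L$ be the given parabolic and Levi of $G$, with $b\in[b_G]\cap L(\breve{\Q}_p)$ and $\mu$ as in \Cref{def:HN-reducible}. Let $P^{\ad},L^{\ad}$ be their images in $G^{\ad}\cong H^{\ad}$, and let $P_H,L_H$ be their preimages in $H$. Since $H\to H^{\ad}$ is a central isogeny-type quotient with connected central kernel issues aside, $P_H$ is a parabolic of $H$ with Levi $L_H$, and $U_H\to U^{\ad}\leftarrow U$ are isomorphisms. Next we choose representatives. Since $\{\mu_H\}$ maps to $\{\mu_{G^{\ad}}\}$ and $\mu$ lands in $L$, its image lands in $L^{\ad}$, and we pick $\mu_H\in\{\mu_H\}$ lifting it. Any lift then factors through $L_H$, because $L_H$ is the full preimage. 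This gives condition (1). For $b_H$, the image of $b$ in $L^{\ad}(\breve{\Q}_p)$ is a representative of $[b_{H^{\ad}}]$, and we need an element of $[b_H]$ lying in $L_H(\breve{\Q}_p)$ over it. For this we use that $[b_H]$ is determined by its image in $B(H^{\ad})$ together with $\kappa_H$ (Kottwitz), and that $H(\breve{\Q}_p)\to H^{\ad}(\breve{\Q}_p)$ is surjective up to the $\sigma$-conjugacy ambiguity coming from $Z_H$. Concretely, we lift $\bar b$ to some $b_H'\in L_H(\breve{\Q}_p)$, then adjust by an element of $Z_H(\breve{\Q}_p)$ to match $\kappa_H$. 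Condition (3) is then immediate: the actions of $\mu_H$ and $\nu_{b_H}$ on $\mathrm{Lie}(U_H)$ agree with those of $\mu$ and $\nu_b$ on $\mathrm{Lie}(U)$ via $\mathrm{Lie}(U_H)\cong\mathrm{Lie}(U^{\ad})\cong\mathrm{Lie}(U)$, since the Newton map is functorial and central parts act trivially.

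The main obstacle is condition (2): nonemptiness of $[b_H]_{L_H}\cap L_H(\breve{\Z}_p)\mu_H(p)L_H(\breve{\Z}_p)$, where the class is taken in $L_H$. Our approach is to work in $B(L_H)$. By Kottwitz/Rapoport--Richartz the condition is equivalent to $[b_H]_{L_H}$ lying in the acceptable set $B(L_H,\mu_H)$, which consists of a Newton-point inequality and the equality $\kappa_{L_H}(b_H)=\mu_H^\sharp$ in $\pi_1(L_H)_\Gamma$. The Newton inequality can be checked on $L_H^{\ad}=L^{\ad,\ad}$ plus the central part, and on $L^{\ad}$ it transfers from $L$. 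The Kottwitz-invariant equality holds on $\pi_1(L^{\ad})_\Gamma$, so the only freedom left is in $X_*(Z_H)_\Gamma$. We resolve this by choosing the central modification of $b_H$ in the previous step so that $\kappa_{L_H}(b_H)=\mu_H^\sharp$ exactly. This choice has to stay compatible with $[b_H]\in B(H,\mu_H)$, which already pins down $\kappa_H(b_H)=\mu_H^\sharp$ in $\pi_1(H)_\Gamma$. If that compatibility is delicate, the fallback is to use \cite[Theorem 8.10]{RapoportViehmann} and Hodge--Newton decomposition results for $B(H,\mu_H)$, for example \cite[Lemma 3.3]{chen2023fargues}. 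These show that Hodge--Newton reducibility is a property of the class in $B(H,\mu_H)$ relative to $L_H$, verifiable on the adjoint datum. The lemma then follows, since the adjoint data of $H$ and $G$ coincide.
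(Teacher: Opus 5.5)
Your argument is correct, but it takes a different route from the paper at the one substantive step: producing a representative $b_H\in L_H(\breve{\Q}_p)$ that satisfies condition (2).

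\emph{The paper's route.} After noting that the adjoint datum is Hodge--Newton reducible and taking $L_H$ as the preimage (as you do), the paper picks $x\in[b_{G^{\ad}}]\cap L_{H^{\ad}}(\breve{\Z}_p)\mu_{H^{\ad}}(p)L_{H^{\ad}}(\breve{\Z}_p)$. It lifts $x$ \emph{integrally} to some $y\in L_H(\breve{\Z}_p)\mu_H(p)L_H(\breve{\Z}_p)$ and takes $[b_H]$ to be the class of $y$. Condition (2) then holds by construction. There is no Kottwitz-invariant bookkeeping and no appeal to the nonemptiness criterion $X^{L_H}_{\mu_H}(b)\neq\emptyset\iff[b]_{L_H}\in B(L_H,\mu_H)$ (the converse to Mazur's inequality), which your route needs.

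\emph{What your route buys.} It keeps the originally given class $[b_H]$ and shows that this very datum is Hodge--Newton reducible. The paper tacitly replaces $[b_H]$ by $[y]$. The two classes in fact coincide, by exactly your injectivity of $B(H)\to B(H^{\ad})\times\pi_1(H)_\Gamma$, since $\kappa_H(y)=\mu_H^\sharp$. You also verify condition (3) for $H$, which the paper leaves implicit.

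\emph{The compatibility you flag is not delicate,} so the fallback is unnecessary. Choose $z\in Z_H(\breve{\Q}_p)$ with $\kappa_{L_H}(zb_H')=\mu_H^\sharp$ in $\pi_1(L_H)_\Gamma$. This is possible by right exactness of $X_*(Z_H)_\Gamma\to\pi_1(L_H)_\Gamma\to\pi_1(L_{H^{\ad}})_\Gamma\to 0$ and surjectivity of $\kappa$ on the unramified torus $Z_H$. Pushing forward along $\pi_1(L_H)_\Gamma\to\pi_1(H)_\Gamma$ then gives $\kappa_H(zb_H')=\mu_H^\sharp=\kappa_H(b_H)$ automatically, hence $zb_H'\in[b_H]$.

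\emph{A shared assumption.} Both arguments tacitly assume $Z_H$ is a torus. Your rational lift needs $H^1(\breve{\Q}_p,Z_H)=0$; the paper's lift needs $L_H(\breve{\Z}_p)\to L_{H^{\ad}}(\breve{\Z}_p)$ to be surjective.

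\emph{A possible shortcut.} You could shorten your step (2) by pushing an element of $[b]_L\cap L(\breve{\Z}_p)\mu(p)L(\breve{\Z}_p)$ into the image of $L$ in $G^{\ad}$ and lifting it integrally, as the paper does. Your injectivity argument then shows the lift lies in $[b_H]$.
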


\begin{proof}
    Since $(G, [b_G], \{ \mu_G\})$ is Hodge--Newton reducible (with respect to some fixed parabolic $P_G$ and Levi $L_G$), by definition, there exists an induced datum $(L_G, [b_{G}], \{ \mu_{G}\})$ for the Levi $L_G$. 
     Let $L_{G^{\ad}}$ (resp. $P_{G^{\ad}}$) be the image of $L_G$ (resp. $P_G$) along the quotient map $q_G: G\twoheadrightarrow G^{\ad}$. We now check the local Shimura datum $(G^{\ad}, [b_{G^{\ad}}], \{\mu_{G^{\ad}}\}])$ induced by taking adjoint quotient is again Hodge--Newton reducible. In fact, Properties (1) and (2) in Definition~\ref{def:HN-reducible} are evident. Property (3) holds since the unipotent radical $U_G$ is isomorphic to its image under the map $q_G: G\twoheadrightarrow G^{\ad}$. Thus, we obtain datum $(L_{G^{\ad}}, [b_{G^{\ad}}], \{ \mu_{G^{\ad}}\})$ for the Levi $L_{G^{\ad}}$ of $G^{\ad}$.
     
    By definition, there is an isomorphism $\Xi:(G^{\ad}, [b_{G^{\ad}}], \{\mu_{G^{\ad}}\}]) \cong (H^{\ad}, [b_{G^{\ad}}], \{ \mu_{H^{\ad}}\})$ of local Shimura data. We also write $\Xi:G^{\ad}\xrightarrow{\sim} H^{\ad}$ for the underlying group isomorphism. Now let $L_H$ be the preimage of $L_{H^{\ad}}:=\Xi(L_{G^{\ad}})$ under $q_H: H\twoheadrightarrow H^{\ad}$; so $L_H$ is a Levi of $H$. We want to show that some $b_{H}$ and minuscule $\mu_{H}$ such that $(H, [b_H], \{\mu_H\})$ is Hodge--Newton reducible with respect to $(L_H, [b_{H}], \{ \mu_{H}\})$. 
    
    We first focus on the cocharacter $\mu_{H}$. Let $g \in \GG_m$. By the Hodge--Newton reducibility of the adjoint datum, we saw that $\mu_{G^{\ad}} \cong \mu_{H^{\ad}}$ factors through the Levi $L_{G^{\ad}}$ of $G^{\ad}$. Then by definition, $\mu_H(g) \in q_H^{-1}(\mu_{H^{\ad}}) \subset L_H$. Thus, $\mu_{H}: \GG_m \to H$ factors through $\GG_m \xrightarrow{\mu_{L_H}}L_H \hookrightarrow H$. For any $\lambda \in X_*(Z(L_H))$, the pairing with any weight coming from $\mu_{H}$ is trivial. Thus, $\mu_{H}$ is minuscule since $\{\mu_{G^{\ad}}\}$ is minuscule.

    To show that $b_H$ satisfies \Cref{def:HN-reducible}, let 
    \begin{equation*}
        x \in [b_{G^{\ad}}] \cap L_{G^{\ad}}(\breve{\Z}_p) \mu_{G^{\ad}}(p) L_{G^{\ad}}(\breve{\Z}_p) = [b_{G^{\ad}}] \cap L_{H^{\ad}}(\breve{\Z}_p) \mu_{H^{\ad}}(p) L_{H^{\ad}}(\breve{\Z}_p).
    \end{equation*}
    Let $y$ be a lift of $x$ in $L_H(\breve{\Z}_p) \mu_{H}(p) L_H(\breve{\Z}_p)$. Then $[b_{H}]\in B(H)$ in the datum $(H, [b_H], \{ \mu_H\})$ is the $\sigma$-conjugacy class of $y$ in $H$, thus satisfying Property (2) in Definition~\ref{def:HN-reducible} with respect to $L_H$.  \end{proof}

Recall that the set $I_{b,\{\mu\}, L}$ played an important role in the original statement of \Cref{cnj: HV OG}. We revisit this for unramified Hodge--Newton reducible local Shimura data of abelian type:

\begin{lemma}\label{lem: abelian I set}
    Let $(G, [b_G], \{\mu_G\})$ be an unramified local Shimura datum of abelian type. Suppose it is Hodge--Newton reducible with respect to parabolic $P_G$ and a Levi $L_G$. Let $(L_G, [b_G], \{\mu_{G}\})$ be the local Shimura datum corresponding to a Levi subgroup $L_G \subsetneq G$. Let $I_{b_G,\{\mu_G\}, L_G}$ be the set of $L_G(\breve{\Z}_p)$-conjugacy classes of $L_G$, as in \Cref{cnj: HV OG}. Then $I_{b_G,\{\mu_G\}, L_G}$ is a singleton, whose unique element is $\{\mu_G\}$. Moreover, the tuple $(L_G, [b_G], \{\mu_{G}\})$ is an unramified local Shimura datum of abelian type.
\end{lemma}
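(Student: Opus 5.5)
The proof has two parts: the singleton claim for $I_{b_G,\{\mu_G\},L_G}$, and the abelian-type claim for the Levi datum. For the first, the natural route is to reduce to the Hodge-type case, where \cite[Theorem 8.10]{RapoportViehmann} gives the singleton statement. An alternative is to prove it directly for $G$ via the Kottwitz map and Newton point, which is really the content of the Rapoport--Viehmann argument.

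We start with uniqueness. Suppose $\mu'$ represents a class in $I_{b_G,\{\mu_G\},L_G}$. Then $\mu'\in\{\mu_G\}$ factors through $L_G$, and there is $b'\in[b_G]\cap L_G(\breve{\Z}_p)\mu'(p)L_G(\breve{\Z}_p)$. Consider the image of $b'$ under $\kappa_{L_G}$ in $\pi_1(L_G)$; its image in $\pi_1(L_G)_\Gamma\otimes\Q$ equals the image of $\mu'$. By Hodge--Newton reducibility (condition (3) of Definition~\ref{def:HN-reducible}), the $L_G$-Newton point of $b_G$ is $G$-dominant and its image in $\pi_1(L_G)_\Gamma\otimes\Q$ agrees with that of $\mu_G$.

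The key step is then this: among the $L_G$-conjugacy classes inside the $G$-conjugacy class $\{\mu_G\}$, the one whose image in $X_*(Z(L_G))\otimes\Q$ is the $G$-dominant (i.e.\ $P_G$-positive) Hodge--Newton contact point is unique. Concretely, the $L_G$-classes in $\{\mu_G\}$ are indexed by $W_{L_G}\backslash W_G/W_{\mu}$. Pairing the average of $\mu'$ over $W_{L_G}$ with the $P_G$-positive characters on $\mathrm{Lie}(U_G)$ shows that this average is maximal exactly for the $P_G$-dominant representative. Equality with the Newton point (forced by the Kottwitz map) therefore pins down $\{\mu'\}_{L_G}=\{\mu_G\}_{L_G}$.

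If the uniqueness step gets messy, I would instead pass to the adjoint datum, exactly as in the proof of Lemma~\ref{lem: HN ab Hodge}. There, $U_G\cong U_{G^{\ad}}$, and $L_G$-classes in $\{\mu_G\}$ map injectively to $L_{G^{\ad}}$-classes, because $\mu'$ and $\mu_G$ differ by a $W_G$-element, which kills central components. Transporting through $\Xi$ to $H^{\ad}$, and then lifting to $H$ as in Lemma~\ref{lem: HN ab Hodge}, reduces the statement to the Hodge-type $(H,[b_H],\{\mu_H\})$. For that datum, \cite[Theorem 8.10]{RapoportViehmann} says $I_{b_H,\{\mu_H\},L_H}$ is a singleton. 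Nonemptiness of $I_{b_G,\{\mu_G\},L_G}$ is immediate from conditions (1)--(2) of Definition~\ref{def:HN-reducible}, which exhibit $\{\mu_G\}$ itself as an element.

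For the second claim, $(L_G,[b_G],\{\mu_G\})$ is a local Shimura datum: $\mu_G$ is minuscule for $L_G$ because it is minuscule for $G$, and $[b_G]_{L_G}\in B(L_G,\{\mu_G\})$ by condition (2). $L_G$ is unramified, being a Levi of an unramified group defined over $\Z_p$ via a standard parabolic. To see it is of abelian type, I would take $L_H$ and the datum $(L_H,[y],\{\mu_{L_H}\})$ constructed in the proof of Lemma~\ref{lem: HN ab Hodge}. It is of Hodge type by restricting the Hodge embedding $H\hookrightarrow\GL_n$ to $L_H$; the cocharacter type $(1^r,0^{n-r})$ is preserved. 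It remains to identify adjoint data. We have $L_G^{\ad}=L_{G^{\ad}}/Z(L_{G^{\ad}})$, and $\Xi$ carries $L_{G^{\ad}}$ onto $L_{H^{\ad}}$, hence induces $L_G^{\ad}\cong L_H^{\ad}$ matching $\mu$'s and $b$'s. The $b$'s match because $y$ lifts $x$.

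I expect the main obstacle to be the uniqueness step. Making the Weyl-group/contact-point argument precise for a general unramified $G$ is the real work, as is checking that the reduction to $H$ does not lose information in the central direction; injectivity of $L_G$-classes to adjoint classes needs care.
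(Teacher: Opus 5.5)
Your proposal is correct and follows essentially the same route as the paper: the singleton claim comes from running the Rapoport--Viehmann argument directly for the unramified Hodge--Newton reducible datum, using the Kottwitz invariant in $\pi_1(L_G)_\Gamma$ together with the $P_G$-positivity of condition (3), and the abelian-type claim comes from taking the Levi $L_H=q_H^{-1}(\Xi(L_{G^{\ad}}))$ of the auxiliary Hodge datum, with matching adjoint data. Your Weyl-group step (pairing with $2\rho_U$, which is $W_{L_G}$- and $\Gamma$-invariant and strictly positive on the simple coroots outside $L_G$) supplies the detail the paper leaves to ``the same reasoning as in'' \cite{RapoportViehmann}, so you do not need the fallback reduction to $H$; that reduction would in any case require extra care to lift the double-coset condition through the center.
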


\begin{proof} Let $(H, [b_H], \{\mu_H\})$ be an local Shimura datum of Hodge type associated to $(G, [b_G], \{\mu_G\})$. There exists an unramified Levi subgroup $L_H \subsetneq H$, such that $(L_G, [b_G], \{\mu_{G}\})$ is associated to $(L_H, [b_H], \{\mu_{H}\})$. Associated to $I_{b_G,\{\mu_G\}, L_G}$, there is $I_{b_H,\{\mu_H\}, L_H}$. Then, by the same reasoning as in the proof of \cite[Theorem 8.10(i)]{RapoportViehmann}, we can show that $I_{b_G,\{\mu_G\}, L_G}$ is a singleton. Note that while the original set-up in \cite{RapoportViehmann} requires EL type, the proof applies to any unramified Hodge--Newton reducible local Shimura data. Consider the local abelian type datum $(L_G, [b_G], \{\mu_G\})$ and its associated local Hodge type datum $(L_H, [b_H], \{\mu_H\})$. Since $H$ is unramified, so is $L_H$. By \cite[Lemma 4.1.2]{HongPublished}, we know that $(L_H, [b_H], \{\mu_H\})$ is unramified. It follows that $(L_G, [b_G], \{\mu_G\})$ is an unramified local Shimura datum of abelian type. \end{proof}

We record the following group-theoretic lemma for later use. This should be well-known, but it seems hard to find a reference for it.
\begin{lemma}
    Let $G$ be a connected reductive group over a field $k$. Let $G^{\der}=[G,G]$ be the derived subgroup of $G$. Then there is a natural isomorphism
    \begin{equation*}
        G\cong G^{\der}\times^{Z(G^\der)} Z(G),
    \end{equation*}
    where the right-hand side is the contracted product.
    Furthermore, for any central subgroup $H$, there is a natural isomorphism
    \begin{equation*}
        Z(G/H)\cong Z(G)/H.
    \end{equation*}
\end{lemma}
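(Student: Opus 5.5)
We prove the two isomorphisms separately, via the multiplication morphism and a comparison of centers. Neither step needs any earlier result of the paper. We may work over $\bar k$ and descend at the end, since all the maps involved are defined over $k$.

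For the first isomorphism, consider the multiplication morphism
\begin{equation*}
m: G^{\der}\times Z(G)\to G,\qquad (g,z)\mapsto gz.
\end{equation*}
This is a homomorphism of algebraic groups because $Z(G)$ is central. It is surjective, since $G=G^{\der}\cdot Z(G)^\circ$ for a connected reductive group, $Z(G)^\circ$ being the radical. The scheme-theoretic kernel is
\begin{equation*}
\{(z,z^{-1}) : z\in G^{\der}\cap Z(G)\}.
\end{equation*}
We then check that $G^{\der}\cap Z(G)=Z(G^{\der})$ as group schemes. The inclusion $\subseteq$ is clear. For $\supseteq$, an element of $Z(G^{\der})$ centralizes $G^{\der}$, and it centralizes $Z(G)^\circ$ because that group is central; hence it centralizes $G=G^{\der}Z(G)^\circ$. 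Since the statement is scheme-theoretic, this argument should be run on points valued in arbitrary $k$-algebras $R$, or via Lie algebras in small characteristic. One can also use the root datum description: both sides are the diagonalizable group cut out by the roots inside the maximal torus $T\cap G^{\der}$. The quotient of $G^{\der}\times Z(G)$ by the antidiagonally embedded $Z(G^{\der})$ is exactly the contracted product $G^{\der}\times^{Z(G^\der)}Z(G)$. A surjective homomorphism of affine algebraic groups induces an isomorphism from the fppf quotient by its kernel, so $m$ descends to the claimed isomorphism. Naturality holds because every map here is canonical.

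For the second isomorphism, let $H\subset Z(G)$ be a central subgroup and let $\pi:G\to G/H$ be the quotient. Clearly $\pi(Z(G))\subseteq Z(G/H)$, which gives a map $Z(G)/H\to Z(G/H)$. This map is a monomorphism, since its kernel is $(Z(G)\cap H)/H$, which is trivial. For surjectivity, take a point $\bar x$ of $Z(G/H)$ and lift it fppf-locally to $x\in G$. The map $g\mapsto xgx^{-1}g^{-1}$ is then a morphism from $G$ to $H$. It is a homomorphism because $H$ is central, and it restricts to $1$ on $G^{\der}$. It is also trivial on the maximal torus $T\ni x$: centralizing modulo a central subgroup, a connected group $T$ maps into $H$ continuously with trivial value at $1$, and it is trivial on the connected torus after using the first part. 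More cleanly, both $Z(G)$ and $Z(G/H)$ are the kernels of the roots on $T$ and $T/H$ respectively, and roots factor through $T/H$. Hence $x\in Z(G)$.

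The main obstacle is this last step: making the surjectivity (and the identification $G^{\der}\cap Z(G)=Z(G^{\der})$) rigorous scheme-theoretically when $H$ or the centers are non-smooth, for instance $\mu_p$ in characteristic $p$. The plan there is to reduce everything to maximal tori. For a connected reductive group, $Z(G)=\bigcap_{\alpha}\ker(\alpha|_T)$, where $\alpha$ ranges over the roots. The roots of $G/H$ relative to $T/H$ are the same characters, factored through $T/H$. Taking kernels therefore commutes with the quotient by $H$, which gives $Z(G/H)=Z(G)/H$ directly.
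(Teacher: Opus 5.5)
Your proof is correct. For the first isomorphism it follows the paper's route: the multiplication map $G^{\der}\times Z(G)\to G$, whose kernel is the antidiagonal copy of $G^{\der}\cap Z(G)$, with surjectivity from $G=G^{\der}\cdot Z(G)$. You are more careful than the paper in two places it passes over.
\begin{itemize}
\item You prove $Z(G^{\der})\subseteq Z(G)$. This is needed just to form the contracted product, and also to identify the kernel with $Z(G^{\der})$.
\item You conclude via the fppf first isomorphism theorem rather than from ``injective and surjective''. Checked only on points, the latter would not suffice in characteristic $p$.
\end{itemize}
One harmless slip: $Z(G)^\circ$ need not be the radical in characteristic $p$. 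For example, $G=\mathrm{SL}_p\times\mathbb{G}_m$ has $Z(G)^\circ=\mu_p\times\mathbb{G}_m$; the radical is only its maximal torus. The equality $G=G^{\der}\cdot Z(G)^\circ$ still holds, so nothing breaks.

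For $Z(G/H)\cong Z(G)/H$ the paper gives no argument (``straightforward''), and your root-kernel argument fills this correctly. It uses three facts: $Z(G)=\bigcap_\alpha\ker(\alpha|_T)$; $T/H$ is a maximal torus of $G/H$ with the same roots; and the preimage in $T$ of $\bigcap_\alpha\ker(\alpha|_{T/H})$ is $\bigcap_\alpha\ker(\alpha|_T)$.

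Your commutator version, however, contains a false step. You argue that $c\colon g\mapsto xgx^{-1}g^{-1}$ is trivial on $T$ because a connected group mapping into $H$ with value $1$ at the identity must map trivially. That rigidity argument works only for finite $H$. For $H=Z(\mathrm{GL}_n)=\mathbb{G}_m$, a homomorphism $T\to H$ can be nontrivial.

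The correct reason is simpler. The map $c$ is trivial on $Z(G)$, because central elements commute with $x$. It is trivial on $G^{\der}$, because $H$ is commutative. Hence it is trivial on $G=G^{\der}\cdot Z(G)$ by the first part. Alternatively, choose the fppf-local lift $x$ inside $T$, which is possible since $Z(G/H)\subseteq T/H$; then $c|_T=1$ holds for trivial reasons.
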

\begin{proof}
    The map 
    \begin{equation*}
        \pi:G^{\der}\times^{Z(G^\der)} Z(G)\to G,\quad (g,z)\mapsto gz
    \end{equation*}
    is a well-defined homomorphism. 
    
    To see this is injective, assume $\pi(g,z)=gz=1$. So $g=z^{-1}\in Z(G^\der)$. Since the pair $(z^{-1},z)$ is equivalent to $(1,1)$ in $G^{\der}\times^{Z(G^\der)} Z(G)$, we see $\pi$ is injective. 
    
    On the other hand, $G=G^\der\cdot Z(G)$. Thus, $\pi$ is also surjective. The equality $Z(G/H)\cong Z(G)/H$ for any central $H$ is straightforward.
\end{proof}

\begin{example}
    Let $G=\GL_n$. Then $G^\der=\SL_n$, $Z(G^\der)=\mu_n$, and $Z(G)=\GG_m$.
\end{example}

\section{Functoriality of Rapoport--Zink spaces}\label{Sec: 8}

We present here some basic results concerning the functoriality of local Shimura data. Some of the results may have been known, but we include them for exposition. 

\begin{lemma}
    Let $(G, [b_G], \{\mu_G\})$ and $(G', [b_{G'}], \{\mu_{G'}\})$ be unramified local Shimura data of abelian type. Then $(G \x G', [(b_G, b_{G'})], \{(\mu,\mu')\}))$ is also an unramified local Shimura datum of abelian type. Here $[(b, b')]$ is defined to be the $\sigma$-conjugacy class of $(b,b')\in G(\breve{\Q}_p)\times G'(\breve{\Q}_p)$ for any representative $b$ of $[b]$ (resp. $b'$ of $[b']$). $(\mu_G,\mu_{G'})$ is defined similarly.

\end{lemma}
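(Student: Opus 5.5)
The plan is to verify the three ingredients in turn: that the product triple is a local Shimura datum, that it is unramified, and that it is of abelian type, the last by producing an explicit product Hodge-type datum.

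First, the local Shimura datum axioms. $G \times G'$ is connected reductive over $\Q_p$, and $\{(\mu_G,\mu_{G'})\}$ is a conjugacy class of cocharacters of $(G\times G')_{\overline{\Q}_p}$. The Kottwitz set is multiplicative, $B(G\times G') = B(G)\times B(G')$, with Newton map and Kottwitz map $\kappa$ computed componentwise, since $\pi_1(G\times G') = \pi_1(G)\oplus \pi_1(G')$. The condition $[b]\in B(G,\{\mu\})$ is the conjunction of an equality of Kottwitz invariants and a dominance inequality of Newton points. Both decompose into the corresponding conditions on each factor, because the Weyl group, the positive roots and the simple coroots of the product are disjoint unions. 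Hence $[(b_G,b_{G'})]\in B(G\times G',\{(\mu_G,\mu_{G'})\})$. Minusculeness is also checked factorwise: the roots of $G\times G'$ are the roots of $G$ together with those of $G'$, and each pairs with the cocharacter $(\mu_G,\mu_{G'})$ through only one component, so every pairing lies in $\{-1,0,1\}$.

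Second, unramifiedness. A product of quasisplit groups is quasisplit, since the product of Borels defined over $\Q_p$ is a Borel. A product of groups split over $\Q_p^{\ur}$ is split over $\Q_p^{\ur}$. Equivalently, the product of the reductive $\Z_p$-models of $G$ and $G'$ is a reductive model of $G\times G'$.

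Third, abelian type. Choose unramified Hodge-type data $(H,[b_H],\{\mu_H\})$ and $(H',[b_{H'}],\{\mu_{H'}\})$ with $\Xi: G^{\ad}\cong H^{\ad}$ and $\Xi': G'^{\ad}\cong H'^{\ad}$ as isomorphisms of local Shimura data. By the first two steps, $(H\times H', [(b_H,b_{H'})], \{(\mu_H,\mu_{H'})\})$ is an unramified local Shimura datum. It is of Hodge type via the block-diagonal embedding
\[
H\times H'\hookrightarrow \GL(V)\times\GL(V')\hookrightarrow \GL(V\oplus V').
\]
Under this embedding $(\mu_H,\mu_{H'})$ maps to a cocharacter of type $(1^{r+r'},0^{n+n'-r-r'})$ after reordering, and $[(b_H,b_{H'})]$ maps to the class of the block-diagonal element. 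Its image is bounded by that cocharacter, because $B(\,\cdot\,,\{\mu\})$ is functorial for morphisms of groups.

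Finally, I compare adjoint data. Since $Z(G\times G') = Z(G)\times Z(G')$, we get $(G\times G')^{\ad} = G^{\ad}\times G'^{\ad}$, and similarly for $H\times H'$. The map $B(G\times G')\to B((G\times G')^{\ad})$ is the product of the two adjoint maps, and the adjoint cocharacter is $(\mu_{G^{\ad}},\mu_{G'^{\ad}})$. Therefore $\Xi\times\Xi'$ is an isomorphism of adjoint local Shimura data.

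The only step requiring any care is the Hodge-type step, specifically checking that the block-diagonal embedding sends the product datum to a genuine local Shimura datum of $\GL(V\oplus V')$ with cocharacter of the required form $(1^r,0^{n-r})$. This is immediate from concatenating the weights $0,1$ of the two factors. Everything else is formal compatibility of $B(\cdot)$, $\kappa$ and adjoint quotients with products.
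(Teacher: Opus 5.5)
Your proof is correct and follows essentially the paper's route: take the product of the two associated Hodge-type data and use $(G\times G')^{\ad}\cong G^{\ad}\times G'^{\ad}\cong H^{\ad}\times H'^{\ad}\cong (H\times H')^{\ad}$. The only difference is that you prove directly what the paper either cites from Hong's Proposition 3.1.2(i) or leaves implicit. This covers the block-diagonal embedding into $\GL(V\oplus V')$, and the factorwise checks on $B(\cdot,\{\mu\})$, minusculeness, and unramifiedness of $G\times G'$.
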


\begin{proof}
Let $(H, [b], \{\mu_H\})$ and $(H', [b'], \{\mu_{H'}\})$ be the local Shimura data of Hodge type associated with $(G, [b_G], \{\mu_G\})$ and $(G', [b_{G'}], \{\mu_{G'}\})$, respectively. By \cite[Prop 3.1.2(i)]{HongPublished}, we know that $(H\x H', [(b_H, b_{H'})], \{(\mu_H, \mu_{H'})\})$ is an unramified local Shimura datum of Hodge type. Since \[(G \x G')^{\ad} \cong G^{\ad} \x G'^{\ad} \cong H^{\ad} \x H'^{\ad} \cong (H \x H')^{\ad}, \]
we see that this is a local Hodge datum associated to the datum $(G\x G', [(b_G, b_{G'})], \{(\mu_G, \mu_{G'})\})$. Hence, the latter is unramified of abelian type. \end{proof}
    
The following proposition provides some basic functorial properties for Rapoport--Zink spaces of abelian type. This is a generalization of \cite[Theorem 4.9.1]{kim2018rapoport}.\\

\begin{proposition}\label{prop: functoriality-ab}
     Let $(G, [b_G], \{\mu_G\})$ and $(G', [b_{G'}], \{\mu_{G'}\})$ be unramified local Shimura data of abelian type. Write $\RZ_{G}$ (resp. $\RZ_{G'}$, $\RZ_{G\times G'}$) for the Rapoport--Zink space associated to the datum $(G, [b_G], \{\mu_G\})$ (resp, $(G', [b_{G'}], \{\mu_{G'}\})$, $(G \x G', [(b_G, b_{G'})], \{(\mu,\mu')\})$).
    \begin{enumerate}
    \item There is a natural isomorphism
    \begin{align*}
        \RZ_{G} \times_{\mathrm{Spf}(\breve{\Z}_p)} \RZ_{G'} \xlongrightarrow{\sim} \RZ_{G\times G',}
    \end{align*}
    
    \item For any map $f:(G,[b_G],\{\mu_G\}) \to (G',[b_{G'}],\{\mu_{G'}\})$ between local Shimura data which are unramified of abelian type, there exists an induced morphism
    \begin{align*}
        \RZ_{G} \longrightarrow \RZ_{G'}
    \end{align*}
    which is a closed embedding if the underlying group homomorphism $f:G\to G'$ is a closed embedding. 
\end{enumerate}
\end{proposition}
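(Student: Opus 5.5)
The plan is to reduce both statements to the Hodge-type case, where they are known from \cite[Theorem 4.9.1]{kim2018rapoport} and \cite[Prop 3.1.2]{HongPublished}. We then transport them through Shen's construction $\RZ_G = J_{b_G}(\Q_p)\RZ_G^+ \cong \bigsqcup_{J_{b_G}(\Q_p)/J_{b_G}(\Q_p)^+}\RZ_H^+$.

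For (1), fix auxiliary Hodge-type data $(H,[b_H],\{\mu_H\})$ and $(H',[b_{H'}],\{\mu_{H'}\})$. By the preceding lemma, $(H\times H',[(b_H,b_{H'})],\{(\mu_H,\mu_{H'})\})$ is an auxiliary Hodge datum for $G\times G'$. The Hodge-type product isomorphism $\RZ_H\times\RZ_{H'}\cong \RZ_{H\times H'}$ is compatible with the Kottwitz maps, because $\pi_1(H\times H')^\Gamma=\pi_1(H)^\Gamma\times\pi_1(H')^\Gamma$ and $c_{(b_H,b_{H'}),(\mu_H,\mu_{H'})}=(c_{b_H,\mu_H},c_{b_{H'},\mu_{H'}})$. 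Taking the fibre over this element gives
\[
\RZ_{H}^+\times\RZ_{H'}^+\cong\RZ_{H\times H'}^+,
\]
and hence $\RZ_G^+\times\RZ_{G'}^+\cong\RZ_{G\times G'}^+$. We have $J_{(b_G,b_{G'})}=J_{b_G}\times J_{b_{G'}}$, and the stabilizer of the $+$-component is the product of the stabilizers, since $X^{G\times G'}(b)=X^G(b_G)\times X^{G'}(b_{G'})$ and the $+$-part is a product fibre. Translating by $J_{b_G}(\Q_p)\times J_{b_{G'}}(\Q_p)$ then extends the isomorphism to the full disjoint unions. Naturality follows because the identification is independent of the auxiliary Hodge data, as recalled in Section 5.

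For (2), I would first handle the adjoint level. A morphism $f$ of local Shimura data induces $f^{\ad}:G^{\ad}\to G'^{\ad}$. I would use the description $\RZ_G\cong\beta_G^*(\RZ_{G^{\ad}})$ under the surjectivity hypothesis on $\pi_1$. The map of Kottwitz sets and the equivariance of $\kappa$ give a commutative square $\pi_1(G)^\Gamma\to\pi_1(G')^\Gamma$ over $\pi_1(G^{\ad})^\Gamma\to\pi_1(G'^{\ad})^\Gamma$. It therefore suffices to produce a map $\RZ_{G^{\ad}}\to\RZ_{G'^{\ad}}$ compatible with $\omega$ and with $J_b(\Q_p)$-translation.

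To get this map, I would pick Hodge data $H,H'$ and try to lift $f^{\ad}$ to a map of Hodge data $H''\to H'$. Here $H''\subset H\times H'$ is the fibre product over $G'^{\ad}$ (or its identity component), with a Hodge embedding obtained from $H\times H'\hookrightarrow \GL_n\times\GL_m\hookrightarrow\GL_{n+m}$. Its adjoint group is $G^{\ad}$. Kim's functoriality then gives $\RZ_{H''}\to\RZ_{H'}$. This restricts to $+$-components, i.e.\ $\RZ_G^+\to\RZ_{G'}^+$, because $f$ sends $c_{b,\mu}$ to $c_{b',\mu'}$. We extend to all of $\RZ_G$ by $J$-equivariance, via $J_{b_G}(\Q_p)\to J_{b_{G'}}(\Q_p)$ and $g\RZ_G^+\mapsto f(g)\RZ_{G'}^+$. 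For this to be well defined we need $f(J_{b_G}(\Q_p)^+)\subset J_{b_{G'}}(\Q_p)^+$, which follows from compatibility with $\omega$.

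For the closed embedding claim, note that if $f$ is a closed embedding then so is $H''\to H'$ up to the finite central part. Kim's result then gives a closed embedding on $+$-components. On the disjoint union, the induced map on index sets $J_{b_G}(\Q_p)/J_{b_G}(\Q_p)^+\to J_{b_{G'}}(\Q_p)/J_{b_{G'}}(\Q_p)^+$ must be injective. I would check this using $X^G_\mu(b)\hookrightarrow X^{G'}_{\mu'}(b')$ on perfections, noting that closedness of a map between disjoint unions of formal schemes is checked componentwise.

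The main obstacle I expect is twofold. First, constructing the lifted Hodge datum $H''$ so that it is unramified, satisfies the $\pi_1$-surjectivity hypothesis, and has $+$-components identified with $\RZ_G^+$. Second, verifying that the closed-embedding property survives passage through the adjoint quotient, since $G\to G'$ injective need not make $G^{\ad}\to G'^{\ad}$ injective. If the fibre-product approach fails, I would instead work directly on the special fibre using the ADLV description $\overline{\RZ}_G^{\mathrm{perf}}\cong X^G_{\mu_G}(b_G)$ and lift via the formal smoothness of both spaces.
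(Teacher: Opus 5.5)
Your part (1) follows the paper's argument: you reduce to the $+$-components via Kim's product isomorphism and the compatibility of the Kottwitz cosets. You also make the $J$/stabilizer bookkeeping explicit, and that part is fine.

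Part (2) has a genuine gap at its first step. A homomorphism $f\colon G\to G'$ induces a map $G^{\ad}\to G'^{\ad}$ compatible with $f$ only if $f(Z_G)\subset Z_{G'}$. This fails for a Levi embedding $L_G\hookrightarrow G$, which is exactly the case the paper needs in \Cref{lem:ab-fibration}(2). For example, take the diagonal torus $T\subset\GL_2$: then $T^{\ad}=1$, but $T\to\PGL_2$ is nontrivial. So in general there is no map $\RZ_{G^{\ad}}\to\RZ_{G'^{\ad}}$ to reduce to. There is also no map $H\to G'^{\ad}$ with which to form $H''=H\times_{G'^{\ad}}H'$.

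Your injectivity worry is misplaced. Whenever $f^{\ad}$ exists and $f$ is injective, $f^{\ad}$ is automatically injective, because $f^{-1}(Z_{G'})\subset Z_G$. But even then $\ker(H''\to H')=Z_H\times 1$, and this is not finite: it contains a nontrivial torus as soon as $\mu_H\neq 1$, since $\det\circ\mu_H\neq 1$. So Kim's closed-embedding functoriality does not apply to $H''\to H'$. The Hodge-type lift that does work for a closed embedding is the preimage in $H'$ of the image of $G$ in $G'^{\ad}$, as in \Cref{lem:Levi-abelian}.

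Two further steps fail as stated.

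First, the map $J_{b_G}(\Q_p)/J_{b_G}(\Q_p)^+\to J_{b_{G'}}(\Q_p)/J_{b_{G'}}(\Q_p)^+$ need not be injective, because $\pi_1(G)^\Gamma\to\pi_1(G')^\Gamma$ need not be. For the ordinary datum on $\GL_2$ and $T\subset\GL_2$, the latter is the sum map $\Z^2\to\Z$. Infinitely many translates of $\RZ_T^+$ then land in a single translate of $\RZ_{\GL_2}^+$, and yet $\RZ_T\to\RZ_{\GL_2}$ is a closed immersion. So the right check is that the images are disjoint and locally finite, not a componentwise check.

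Second, you use $\RZ_G\cong\beta_G^*(\RZ_{G^{\ad}})$, which needs the $\pi_1$-surjectivity hypothesis. That hypothesis is not part of the statement.

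The paper avoids all of this by working with $v$-sheaves instead of Hodge lifts. Shen's $\RZ_G$ is the unique flat, normal, locally formally finite type formal scheme whose $v$-sheaf is $\mathcal{M}^{\mathrm{int}}_{(\mathcal{G},[b_G],\{\mu_G\})}$ \cite[Prop.~1.2.1]{PappasRapoport2024}. Pushing out shtukas along $f$ gives a morphism of $v$-sheaves. That morphism is a closed immersion when $f$ is a closed embedding \cite[Prop.~3.6.2]{PappasRapoport2024}. Full faithfulness \cite[Prop.~18.4.1]{BerkeleyLectures} and \Cref{lem:closed-immersion-formal-schemes} then descend it to a closed immersion of formal schemes. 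This route needs no adjoint map, no $\pi_1$ hypothesis, and no component bookkeeping.
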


\begin{proof}
    Let $(H, [b_H], \{\mu_H\})$ (resp. $(H', [b_{H'}], \{\mu_{H'}\})$) be an unramified local Shimura datum of Hodge type associated to $(G,[b_G],\{\mu_G\})$ (resp. $(G',[b_{G'}],\{\mu_{G'}\})$).
    To simplify notations, we will write $\RZ_G$ for the Rapoport--Zink formal scheme associated to $(G,[b_G],\{\mu_G\})$, and similarly for other local Shimura data. We will freely use the notations from \cite{ShenAbelianRZ}
    \begin{enumerate}
        \item 
        Recall for the local Shimura datum $(H,[b_H],\{\mu_H\})$ of Hodge type, we can form a coset $c_{b_H,\mu_H}\pi_1(H)^{\Gamma}$ in the group $\pi_1(H)^{\Gamma}$; see \cite[\S 2.2]{ShenAbelianRZ} for the construction. The formation of this coset is functorial in $(H,[b_H],\{\mu_H\})$ by the functoriality of the Kottwitz map and the Galois action. Note there is an isomorphism
        \begin{equation*}
            \tau:\pi_1(H)^{\Gamma}\times\pi_1(H')^{\Gamma}\xrightarrow{\sim} \pi_1(H\times H')^{\Gamma}.
        \end{equation*}
        There is a natural isomorphism
        \begin{equation*}
            \phi: c_{b_H,\mu_H}\pi_1(H)^{\Gamma}\times  c_{b_{H'},\mu_{H'}}\pi_1(H')^{\Gamma}\xrightarrow{\sim} c_{H\times H'}\pi_{1}(H\times H')^{\Gamma}.
        \end{equation*}
        compatible with $\tau$.
        
        Fix elements $x_0\in c_{b_H,\mu_H}\pi_1(H)^{\Gamma}$ and $x_0'\in c_{b_{H'},\mu_{H'}}\pi_1(H')^{\Gamma} $, and take $y_0\coloneqq \phi(x_0,x_0')$. There is a natural map of \'etale sheaves 
        \begin{equation*}
            \omega_H:\RZ_H\to c_{b_H,\mu_H}\pi_1(H)^{\Gamma}
        \end{equation*}
        and $\RZ_H^+$ is defined to be the fiber
        \begin{equation*}
            \RZ_H^+\coloneqq \omega_H^{-1}(x_0).
        \end{equation*}
        The constructions of $\RZ_{H'}^+$ and $\RZ_{H\times H'}^+$ are similar.
        
        By the construction of the spaces $\RZ_G$ and $\RZ_{G'}$, we only need to show there is an isomorphism $$\RZ_H^+\times \RZ_{H'}^+\to \RZ_{H\times H'}^+.$$ This then follows from the natural isomorphism
        \begin{equation*}
            \RZ_H\times \RZ_{H'}\xrightarrow{\sim} \RZ_{H\times H'}
        \end{equation*}
        shown in \cite[Theorem 4.9.1]{kim2018rapoport}.

        \item 
       
            By the \cite[Proposition 1.2.1]{PappasRapoport2024}, if there exists a formal scheme that is flat, normal and locally formally of finite type whose $v$-theoretic integral model is of the form $\mathcal{M}^{\mathrm{int}}_{(\mathcal{G}, [b_G], \{\mu_G\})}$, then it must be unique. Such a formal scheme is explicitly constructed by Shen, which we have been denoting by $\RZ_{(G, [b_G], \{\mu_G\})}$.

        From $f:G \to G'$, we get an induced map of local Shimura data:$$f: (G, [b_G], \{\mu_G\}) \longrightarrow (G', [b_{G'}], \{\mu_{G'}\})$$ 
        Consider the induced map on the associated Hodge-type groups $f_H: H \to H'$ such that $b_H \mapsto f_H(b_H) = b_{H'}$. By \cite[Theorem 4.9.1]{kim2018rapoport}, this induces a map
        \begin{align*}
            \RZ_{H, b_H} \to \RZ_{H', b_{H'}},
        \end{align*}
        of the associated Rapoport--Zink spaces of Hodge type.
       
        We now use \cite[Proposition 3.6.2]{PappasRapoport2024}: a closed embedding of local Shimura data $$f: (G, [b_G], \{\mu_G\}) \longrightarrow (G', [b_{G'}], \{\mu_{G'}\})$$ induces the following closed immersion of $v$-sheaves:
        \begin{align*}
            \mathfrak{f}: \mathcal{M}^{\mathrm{int}}_{(\mathcal{G}, [b_G], \{\mu_G\})} \to \mathcal{M}^{\mathrm{int}}_{(\mathcal{G'}, [b_{G'}], \{\mu_{G'}\})}
        \end{align*}
        where $\mathcal{G}$ and $\mathcal{G'}$ are parahoric models of $G$ and $G'$ respectively.
     The statement now is a special case of Lemma \ref{lem:closed-immersion-formal-schemes} below. 
        
    \end{enumerate}

\end{proof}

\begin{lemma}\label{lem:closed-immersion-formal-schemes}
    Let $\mathcal{X}$ and $\mathcal{Y}$ be flat and normal formal schemes locally formally of finite type over $\Spf\cO_{\breve{E}}$ with smooth rigid-analytic generic fibers, and let $\mathfrak{f}:\mathcal{X}^{\Diamond}\to \mathcal{Y}^{\Diamond}$ be a morphism between the associated $v$-sheaves over $\Spd\cO_{\breve{E}}$. If $\mathfrak{f}$ is a closed immersion, then there exists a unique morphism $f:\mathcal{X}\to \mathcal{Y}$ between formal schemes over $\cO_{\breve{E}}$ such that $f$ is a closed immersion and that $f^\Diamond=\mathfrak{f}$.
\end{lemma}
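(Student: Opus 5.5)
We would deduce the statement from the full faithfulness of the diamond functor on the relevant class of formal schemes, combined with the fact that closed immersions of $v$-sheaves over $\Spd\cO_{\breve{E}}$ are detected on suitable test objects. Our plan has four steps: existence and uniqueness of $f$, then the closed-immersion property, first on underlying spaces and then on structure sheaves.

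\textbf{Existence and uniqueness of $f$.} We would invoke the result of Scholze--Weinstein \cite[Proposition 18.4.1]{BerkeleyLectures}, in the form refined in \cite[Proposition 1.2.1]{PappasRapoport2024}. It states that $\mathfrak{Z}\mapsto \mathfrak{Z}^{\Diamond}$ is fully faithful on formal schemes over $\Spf\cO_{\breve{E}}$ that are flat, normal and locally formally of finite type. Both $\mathcal{X}$ and $\mathcal{Y}$ belong to this class by hypothesis. Hence $\mathfrak{f}$ comes from a unique morphism $f:\mathcal{X}\to\mathcal{Y}$ of formal schemes with $f^{\Diamond}=\mathfrak{f}$, which settles both existence and uniqueness.

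\textbf{$f$ is a homeomorphism onto a closed subset.} This is the topological half of being a closed immersion. Recall that $|\mathfrak{Z}^{\Diamond}|$ surjects onto $|\mathfrak{Z}|$, compatibly with morphisms. Closed immersions of $v$-sheaves are injective on points and closed on underlying spaces. Applying these facts on the special fibres, where $\mathfrak{Z}^{\Diamond}$ restricted to characteristic $p$ perfectoid test objects recovers the perfection of the reduced special fibre, we would get that $f_{\mathrm{red}}^{\mathrm{perf}}$ is a closed immersion of perfect schemes. Therefore $f_{\mathrm{red}}$ is universally injective and closed. We would then check that $f_{\mathrm{red}}$ is unramified using the smooth-generic-fibre hypothesis (see the next step). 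From these two facts, $f$ is a finite monomorphism onto its image.

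\textbf{Surjectivity of $f^{\#}:\cO_{\mathcal{Y}}\to f_*\cO_{\mathcal{X}}$.} This is where we expect the main obstacle. The difficulty is that the diamond functor forgets nilpotent and non-reduced information, so the closed-immersion property does not transfer formally. Our approach is to factor $f$ through the scheme-theoretic image $\mathcal{Z}\subset\mathcal{Y}$, taking the flat closure so that $\mathcal{Z}$ is flat. Let $\mathcal{Z}^{\nu}$ be its normalization. By full faithfulness, $\mathcal{X}\to\mathcal{Z}^{\nu}$ becomes an isomorphism after applying $\Diamond$, because $\mathfrak{f}$ is an isomorphism onto its image, which is $\mathcal{Z}^{\Diamond}$. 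Full faithfulness then gives $\mathcal{X}\cong\mathcal{Z}^{\nu}$. It remains to show that $\mathcal{Z}$ is already normal. For this we would use that the rigid generic fibre $\mathcal{Z}^{\rig}=\mathcal{X}^{\rig}$ is smooth. On each affine piece, the ring of bounded functions is then determined by the diamond, via integral closure in the generic fibre. Since $\mathfrak{f}$ is a closed immersion, the points of $\mathcal{Z}^{\Diamond}$ cut out exactly $\mathcal{X}^{\Diamond}$, and we would compare rings of power-bounded functions to conclude $\cO_{\mathcal{Z}}=\cO_{\mathcal{Z}^{\nu}}$. If this comparison proves too delicate in general, the fallback is to use the concrete setting: $\mathcal{Y}$ is of Hodge type, built from $\RZ_H$. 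There one can argue with Kim's tensor description \cite{kim2018rapoport}, in the spirit of \cite[Theorem 4.9.1]{kim2018rapoport}, to see that the image is a closed formal subscheme that is formally smooth, and hence normal.
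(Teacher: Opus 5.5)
Your Step 1 matches the paper. The gap is in Step 3, and it cannot be closed. Nothing in the hypotheses forces the flat closure $\mathcal{Z}$ of the image to be normal, and in fact the lemma is false as stated.

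Take $W=\cO_{\breve{E}}$, $K=W[1/p]$, $\mathcal{Y}=\Spf W\langle x,y,z\rangle$, $\mathcal{X}=\Spf W\langle t\rangle$, and $f^*\colon x\mapsto t^2,\ y\mapsto t^3,\ z\mapsto pt$.

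\begin{itemize}
\item Both spaces are flat and normal, even formally smooth, with smooth generic fibres (closed polydiscs).
\item $K\langle x,y,z\rangle\to K\langle t\rangle$ is surjective, since $t=z/p$. So $f_\eta$ is a closed immersion.
\item The image of $f^*$ is $A=\{g\in W\langle t\rangle : g \bmod p\in\overline{\F}_p[t^2,t^3]\}$, which does not contain $t$. Hence $f$ is not a closed immersion, $\mathcal{Z}=\Spf A$, and $\mathcal{X}=\mathcal{Z}^\nu$.
\item Nevertheless $f^\Diamond$ is a closed immersion. Any $g\colon A\to C^{\sharp+}$ at a geometric point extends uniquely to $W\langle t\rangle$. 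If $C^\sharp$ has characteristic $0$, use $t\mapsto g(pt)/p$; this lies in $C^{\sharp+}$ because its square is $g(t^2)$. If $C^\sharp$ has characteristic $p$, then $g(pt)^2=g(p)g(pt^2)=0$, and one uses $t\mapsto g(t^3)/g(t^2)$, or $0$ if $g(t^2)=0$.
\item Uniqueness at geometric points together with $v$-descent gives $\mathcal{X}^\Diamond\xrightarrow{\sim}\mathcal{Z}^\Diamond$. So $f^\Diamond$ is the closed immersion $\mathcal{Z}^\Diamond\hookrightarrow\mathcal{Y}^\Diamond$, and by full faithfulness $f$ is the only morphism inducing it.
\end{itemize}

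Your proposed comparison of power-bounded functions can therefore only recover $\cO^+(\mathcal{Z}_\eta)=\cO_{\mathcal{Z}^\nu}$, never $\cO_{\mathcal{Z}}$. Here the diamond simply does not distinguish $\mathcal{Z}$ from its normalization. Your reduction to $\mathcal{X}\cong\mathcal{Z}^\nu$ is the right picture: it isolates exactly the missing hypothesis.

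Step 2 also fails on this example. A smooth generic fibre gives no control over ramification on the special fibre. Here $f_{\mathrm{red}}$ is $\mathbb{A}^1\to\mathbb{A}^3$, $t\mapsto(t^2,t^3,0)$, the normalization of a cuspidal curve. It is finite and universally injective but ramified at the origin. Had Step 2 worked, $f$ would be a finite monomorphism, hence already a closed immersion, and Step 3 would be superfluous.

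The paper's own argument has the same defect. It infers $B^+\twoheadrightarrow A^+$ from the surjection $\cO_{\mathcal{Y}_\eta}\twoheadrightarrow\mathfrak{f}_{\eta*}\cO_{\mathcal{X}_\eta}$, that is, from generic-fibre information only. In the example, $K\langle x,y,z\rangle\to K\langle t\rangle$ is surjective while $W\langle x,y,z\rangle\to W\langle t\rangle$ is not.

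A correct statement needs extra input on the special fibre, for instance that the image is normal, or injectivity on tangent spaces via Grothendieck--Messing. Your fallback through Kim's tensor description is of that kind, and it is the sort of argument the functoriality proposition for Rapoport--Zink spaces would actually need. However, it proves the closed-immersion property for those specific spaces, not the lemma as formulated.
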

\begin{proof}
    By \cite[Proposition 18.4.1]{BerkeleyLectures}, there exists a unique morphism $f:\mathcal{X}\to \mathcal{Y}$ representing $\mathfrak{f}$, i.e., $f^\Diamond=\mathfrak{f}$. We only need to verify $f$ is a closed immersion. Since the problem is Zariski-local on the target, we may assume $\mathcal{X}=\Spf A^+$ and $\mathcal{Y}=\Spf B^+$. Write $\mathcal{X}_{\eta}$ (resp. $\mathcal{Y}_{\eta}$) for the adic generic fiber of $\mathcal{X}$ (resp. $\mathcal{Y}$). 
    As pointed out in \cite[Remark 18.4.3.]{BerkeleyLectures}, $A^+$ (resp. $B^+$) can be identified as the ring of powerbounded functions on the generic fiber $\mathcal{X}_\eta$ (resp. $\mathcal{Y}_\eta$). The map $\mathfrak{f}$ induces a closed immersion $\mathfrak{f}_\eta:\mathcal{X}_\eta^\Diamond\to \mathcal{Y}_\eta^\Diamond$, and hence a surjection $\cO_{\mathcal{Y}_\eta^\Diamond}\twoheadrightarrow \mathfrak{f}_{\eta *}\cO_{\mathcal{X}_\eta^\Diamond}$. Since $\mathcal{X}_\eta$ is a smooth analytic adic space over $\Spa \breve{E}$, we have an identification 
    \begin{equation*}
        \Gamma(\mathcal{X}_\eta,\cO^+ )\cong \Gamma(\mathcal{X}_\eta^\Diamond,\cO^+)
    \end{equation*}
    and similarly for $\mathcal{Y}$. This in particular means we obtain a surjection $B^+\twoheadrightarrow A^+$, which shows the map $f$ is a closed immersion.
\end{proof}

\begin{remark}
    \phantom{s}
    \begin{enumerate}
        \item We note that the argument in the second part of the proof applies to any type of local Shimura data, as long as the $v$-theoretic integral model $\mathcal{M}^{\mathrm{int}}_{(\mathcal{G}, b_G, \mu_G})$ is represented by a Rapoport--Zink space, i.e. a formal scheme which is formally smooth and formally locally of finite type over $\Spf \cO_{\breve{E}}$.
        \item A rather elementary but crucial point is the $v$-sheaf functor $\mathfrak{X}\to \mathfrak{X}^{\Diamond}$ on formal schemes is in general not fully faithful. Thus, the proposition above relies on the nice geometric properties of the Rapoport--Zink spaces.
    \end{enumerate}
\end{remark}

\begin{corollary}
    Let $(G^{\ad}, [b_{G^{\ad}}], \{\mu_{G^{\ad}}\}) \to (G'^{\ad}, [b_{G'^{\ad}}], \{\mu_{G'^{\ad}}\})$ be a closed embedding of unramified local Shimura data of \textit{adjoint} abelian type. This induces a closed embedding $\RZ_{G^{\ad}} \to \RZ_{G'^{\ad}}$.
\end{corollary}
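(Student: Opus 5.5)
This corollary should follow by applying \Cref{prop: functoriality-ab}(2) to the adjoint data. The plan is to check its hypotheses and then, as a cross-check, trace the morphism through Shen's construction.

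First, a closed embedding $G^{\ad}\hookrightarrow G'^{\ad}$ underlying a map of local Shimura data is precisely the input of \Cref{prop: functoriality-ab}(2). Both $(G^{\ad},[b_{G^{\ad}}],\{\mu_{G^{\ad}}\})$ and $(G'^{\ad},[b_{G'^{\ad}}],\{\mu_{G'^{\ad}}\})$ are unramified of abelian type. Indeed, each is its own adjoint datum, and an auxiliary Hodge datum $(H,[b_H],\{\mu_H\})$ for the original abelian type datum also serves as one for the adjoint datum. Unramifiedness passes to adjoint quotients, since a reductive $\Z_p$-model of $G$ induces one of $G^{\ad}$.

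Next, I would apply \cite[Proposition 3.6.2]{PappasRapoport2024} to the closed embedding of data. This gives a closed immersion of $v$-sheaves $\mathcal{M}^{\mathrm{int}}_{(\mathcal{G}^{\ad},[b],\{\mu\})}\to\mathcal{M}^{\mathrm{int}}_{(\mathcal{G}'^{\ad},[b'],\{\mu'\})}$, where $\mathcal{G}^{\ad}$ and $\mathcal{G}'^{\ad}$ are hyperspecial models. The map of models exists because a closed embedding of unramified groups extends to their reductive models, possibly after adjusting the hyperspecial vertices. By \cite[Proposition 1.2.1]{PappasRapoport2024}, Shen's formal schemes $\RZ_{G^{\ad}}$ and $\RZ_{G'^{\ad}}$ represent these $v$-sheaves. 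They are flat and normal, indeed formally smooth and locally formally of finite type, with smooth rigid generic fibres, as follows from the local identification $\RZ_{G^{\ad}}\cong\RZ_H/X_*(Z_H)^\Gamma$. Lemma~\ref{lem:closed-immersion-formal-schemes} then upgrades the $v$-sheaf closed immersion to a closed immersion $\RZ_{G^{\ad}}\to\RZ_{G'^{\ad}}$ of formal schemes.

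I expect the main obstacle to be the hypotheses of Lemma~\ref{lem:closed-immersion-formal-schemes}, especially flatness and normality of Shen's quotient spaces together with the identification of their $v$-sheaves with $\mathcal{M}^{\mathrm{int}}$. The approach I would try first is local: the quotient by the discrete group $X_*(Z_H)^\Gamma$ is étale-locally an isomorphism onto $\RZ_H^+$, which is formally smooth by \cite{kim2018rapoport}.

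A secondary subtlety is compatibility with the decompositions $J_b(\Q_p)\times^{J_b(\Q_p)^+}\RZ^+$. The induced map on $\pi_1(\cdot)^\Gamma$-cosets is equivariant by functoriality of the Kottwitz map. However, the $v$-sheaf argument already bypasses this, so it serves only as a consistency check.
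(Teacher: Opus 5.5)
Your proposal is correct and follows the paper's proof, which is just the one-line observation that the adjoint data are themselves unramified of abelian type (each is its own adjoint datum), so the corollary is the special case of \Cref{prop: functoriality-ab}(2). Your further unpacking through Pappas--Rapoport and Lemma~\ref{lem:closed-immersion-formal-schemes} is redundant, with one caveat: the parenthetical claim that a closed embedding of unramified groups always extends to hyperspecial models after moving vertices is false in general, so compatibility of the integral models should be treated as part of the hypothesis on the embedding of data rather than derived.
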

\begin{proof}
    This immediately follows, since it is a special case of the abelian type.
\end{proof}

For an unramified local Shimura datum of abelian type $(G, [b_G], \{\mu_G\})$, and a compact open subgroup $K_G\subset G(\Q_p)$, we write $\RZ^{K_G}_{G}$ for the local Shimura variety with $K_G$-level structure. 

Let $(H, [b_H], \{\mu_H\})$ be an associated local Shimura datum of Hodge type. Let $K'$ for the image of $K_G$ under the map $G\to G^{\ad}\cong H^{\ad}$. Let $K_H$ be preimage of $K'$ along the quotient map $H\to H^{\ad}$. Then $K_H \subset H(\Q_p)$ is an open compact subgroup, and $\RZ^{K_H}_H$ is a local Shimura variety of Hodge type with $K_H$-level structure, built from the datum $(H, [b_H], \{\mu_H\})$.

When no confusion would arise, we write $K =K_G \subset G(\Q_p)$.

\section{Constructions of auxiliary spaces}\label{sec: aux}\label{Sec: 9}

In this section, we construct a few formal schemes related to the adjoint local Shimura datum $(G^{\ad},[b_{G^{\ad}}],\{\mu_{G^{\ad}}\})$. We should explain why this adjoint datum needs separate constructions, as we already have the Rapoport--Zink space $\RZ_{G^{ad}}$ from \cite{ShenAbelianRZ}. The subtlety comes from our strategy of using the correspondence diagram

\[\begin{tikzcd}
                                                      & \RZ_{P_G}^{\mathrm{rig}} \arrow[ld, "{\pi_{1,G}}"] \arrow[rd, "{\pi_{2,G}}"] &                      \\
\RZ_{L_G}^{\mathrm{rig}} \arrow[ru, "s_G", bend left] &                                                                              & \RZ_G^{\mathrm{rig}}\\
\end{tikzcd}\]
as in \cite[Lemma 4.3.1]{HongPublished} for the datum $(G,[b_G],\{\mu_G\})$ \textit{of abelian type}. This requires us to relate the diagram above to the correspondence diagram for the Hodge type $(H,[b_H],\{\mu_H\})$:

\[\begin{tikzcd}
                                                      & \RZ_{P_H}^{\mathrm{rig}} \arrow[ld, "{\pi_{1,H}}"] \arrow[rd, "{\pi_{2,H}}"] &                      \\
\RZ_{L_H}^{\mathrm{rig}} \arrow[ru, "s_H", bend left] &                                                                              & \RZ_H^{\mathrm{rig}}\\
\end{tikzcd}\]

It is natural to think the connection between the two diagrams is given by an analogous diagram for the adjoint datum. But what does this actually mean? Denote by $P_{G^{\ad}}$ (resp. $L_{G^{\ad}}$) the image of $P_H$ (resp. $L_H$) under the quotient map $H\twoheadrightarrow H^{\ad} \cong G^{\ad}$. But note that in general $P_{G^{\ad}}$ is not the adjoint quotient of $P_H$ (similarly for $L_H$). Thus, existing results such as \cite[Proposition 4.9]{ShenAbelianRZ} do not directly tell us what the Rapoport--Zink space associated to $P_{G^{\ad}}$ or $L_{G^{\ad}}$ is. We will resolve this problem in this subsection.

\begin{lemma} \label{lem:Levi-abelian}
    Let $(G, [b_G], \{\mu_G\})$ be an unramified Hodge--Newton reducible local Shimura datum of abelian type, and let $(L_G, [b_G], \{\mu_{G}\})$ be the local Shimura datum corresponding to a Levi subgroup $L_G \subsetneq G$. Then $(L_G, [b_G], \{\mu_{G}\})$ is an unramified local Shimura datum of abelian type.
\end{lemma}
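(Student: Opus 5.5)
The plan is to produce an explicit unramified Hodge-type datum for the Levi and to check that its adjoint datum agrees with that of $(L_G,[b_G],\{\mu_G\})$. First, \Cref{lem: HN ab Hodge} gives an associated unramified Hodge-type datum $(H,[b_H],\{\mu_H\})$ that is Hodge--Newton reducible with respect to $L_H := q_H^{-1}(\Xi(L_{G^{\ad}}))$. Here $L_{G^{\ad}}$ is the image of $L_G$ in $G^{\ad}$, and $\Xi:G^{\ad}\xrightarrow{\sim}H^{\ad}$ is the given isomorphism. That lemma also supplies $b_H\in L_H(\breve{\Q}_p)$ and a minuscule $\mu_H$ factoring through $L_H$, with $[b_H]\cap L_H(\breve{\Z}_p)\mu_H(p)L_H(\breve{\Z}_p)\neq\emptyset$. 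The candidate Hodge-type datum for the Levi is therefore $(L_H,[b_H]_{L_H},\{\mu_H\}_{L_H})$.

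Next I check that this candidate is a local Shimura datum of Hodge type. Restricting a Hodge embedding $H\hookrightarrow\GL(V)$ to $L_H$ gives $L_H\hookrightarrow\GL(V)$, and $\mu_H$ still has weights $(1^r,0^{n-r})$ on $V$. Minusculeness of $\mu_H$ as a cocharacter of $L_H$ follows because the roots of $L_H$ form a subset of the roots of $H$. The remaining point is $[b_H]_{L_H}\in B(L_H,\{\mu_H\})$. Condition (2) of \Cref{def:HN-reducible} gives this directly: an element of $L_H(\breve{\Z}_p)\mu_H(p)L_H(\breve{\Z}_p)$ has Kottwitz invariant equal to $\mu_H$ in $\pi_1(L_H)_\Gamma$, and its Newton point is bounded by $\mu_H$ by Mazur's inequality for $L_H$. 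Unramifiedness of $L_H$ holds because a Levi of an unramified group is unramified; this is \cite[Lemma 4.1.2]{HongPublished}, as already used in \Cref{lem: abelian I set}. The same reasoning applied to $L_G$ shows that $(L_G,[b_G],\{\mu_G\})$ is itself a local Shimura datum.

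The last step is to identify the adjoint data, and this is where I expect the main obstacle. The adjoint group $L_G^{\ad}=L_G/Z(L_G)$ is not $L_{G^{\ad}}$, since $Z(L_G)$ is larger than $Z(G)$; this is exactly the subtlety flagged at the start of \Cref{Sec: 9}. Instead I compute both sides from the centrally-isogenous picture. Because $Z(G)\subset Z(L_G)$, the natural map gives $L_G^{\ad}\cong L_{G^{\ad}}/Z(L_{G^{\ad}})$, using the isomorphism $Z(G/Z)\cong Z(G)/Z$ from the group-theoretic lemma of \Cref{Sec: 7}. The same argument gives $L_H^{\ad}\cong L_{H^{\ad}}/Z(L_{H^{\ad}})$, and $\Xi$ carries $L_{G^{\ad}}$ isomorphically onto $L_{H^{\ad}}$. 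Hence $\Xi$ induces $L_G^{\ad}\cong L_H^{\ad}$.

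Under this isomorphism, $\mu_{L_G^{\ad}}$ and $\mu_{L_H^{\ad}}$ are both the images of $\mu_{G^{\ad}}=\mu_{H^{\ad}}$. Likewise the classes of $b$ correspond, because $b_H$ was chosen as a lift of the same $x\in L_{H^{\ad}}(\breve{\Q}_p)$ that comes from $b_G$. One must use the $L$-level $\sigma$-conjugacy class here, not the $G$-level one. To handle this, I work with the actual elements $b_G\in L_G(\breve{\Q}_p)$ and $x=\Xi(\bar b_G)$ and track their images, rather than working with classes. This yields the required isomorphism of adjoint data and shows that $(L_G,[b_G],\{\mu_G\})$ is unramified of abelian type.
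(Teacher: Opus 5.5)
Your proposal is correct and follows the paper's route. Like the paper, you take $L_H$ to be the preimage in $H$ of the image of $L_G$ in $G^{\ad}\cong H^{\ad}$, observe that $(L_H,[b_H],\{\mu_H\})$ is an unramified Hodge-type datum, and check that it is associated to $(L_G,[b_G],\{\mu_G\})$. Your explicit identification $L_G^{\ad}\cong L_{G^{\ad}}/Z(L_{G^{\ad}})\cong L_{H^{\ad}}/Z(L_{H^{\ad}})\cong L_H^{\ad}$, with the specific $\mu$ and the Levi-level class of $b$ tracked through it, is exactly the verification the paper leaves as ``easy to check.'' (To make $x=\Xi(\bar b_G)$ a legitimate choice in \Cref{lem: HN ab Hodge}, first $\sigma$-conjugate $b_G$ within $L_G$ into $L_G(\breve{\Z}_p)\mu_G(p)L_G(\breve{\Z}_p)$.)
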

\begin{proof}
    Let $(H, [b_H], \{\mu_H\})$ be an unramified local Shimura datum of Hodge type associated to $(G, [b_G], \{\mu_G\})$. Write $L'$ for the image of $L$ under the map $G\to G^{\ad}\cong H^{\ad}$. Then let $L_H$ be preimage of $L'$ along the quotient map $H\to H^{\ad}$. 
    Thus, $L_H$ is an (unramified) Levi subgroup of $H$, and $(H,[b_H],\{\mu_H\})$ induces an unramified local Shimura datum $(L_H,[b_H],\{\mu_{H}\})$ of Hodge type. It is then easy to check $(L_H,[b_H],\{\mu_{H}\})$ is associated to $(L_G, [b_G], \{\mu_{G}\})$. \end{proof}

Recall that for an unramified local Shimura datum $(H, b_H, \{\mu_H\})$ of Hodge type, we have a map 
\begin{equation*}
    \omega_H: \RZ_H \to c_{b_H, \mu_H}\pi_1(H)^{\Gamma},
\end{equation*} 
and the fibre over the point $c_{b_H,\mu_H}$ is by definition $\RZ^+_H$. The same applies to a Levi subgroup $L_H$ of $H$, i.e., $\RZ^+_{L_H}:= \omega^{-1}_{L_H}(c_{b_H, \mu_{H}})$ is the fibre of $\omega_{L_H}: \RZ_{L_H} \to c_{b_H, \mu_{H}}\pi_1(L_H)^{\Gamma}$.

Since $\RZ_H$ is of Hodge type, we adopt the construction of the correspondence diagram from \cite[\S 4]{HongPublished}.

From $Q^\vee_{L_H} \subseteq Q^\vee_{H}$, we obtain a surjective homomorphism
\begin{equation*}
    \varphi: \pi_1(L_H) := X_*(T_H)/ Q^\vee_{L_H}\twoheadrightarrow  X_*(T_H)/ Q^\vee_{H} =: \pi_1(H).
\end{equation*}
Note we have $\varphi(c_{b_H, \mu_{H}})= c_{b_H, \mu_H}$.
Let $\varphi^{\Gamma}: \pi_1(L_H)^{\Gamma} \twoheadrightarrow \pi_1(H)^{\Gamma}$ be the map induced by taking $\Gamma$-invariants on the map $\varphi$. We get a map of cosets $c_{b_H, \mu_{H}}\pi_1(L_H)^{\Gamma} \twoheadrightarrow c_{b_H, \mu_H}\pi_1(H)^{\Gamma}$.

We have a natural map $\omega_{P_H}:=\omega_H \circ \pi_{2,H}: \RZ_{P_H} \to c_{b_H, \mu_H}$. We define $\RZ^+_{P_H}:= \omega_{P_H}^{-1}(c_{b_H, \mu_H})$ to be the fibre, and also set $\RZ^+_{P_G}:= \RZ^+_{P_H}$.

By construction, $\RZ_G^+=\RZ_H^+$, $\RZ_{P_G}^+=\RZ_{P_H}^+$ and $\RZ_{L_G}^+=\RZ_{L_H}^+$ are closed formal subschemes of $\RZ_H$, $\RZ_{P_H}$ and $\RZ_{L_H}$ respectively. We extend the construction to the rigid analytic generic fibre to obtain $\RZ_{G}^{+\mathrm{rig}} = \RZ_{H}^{+\mathrm{rig}}$, $\RZ_{P_G}^{+\mathrm{rig}} = \RZ_{P_H}^{+\mathrm{rig}}$ and $\RZ_{L_G}^{+\mathrm{rig}} = \RZ_{L_H}^{+\mathrm{rig}}$ respectively.

Let $\pi_{1,G}^+=\pi_{1,H}^+ := \pi_{1,H}\big|_{\RZ_{H}^{+}}$ and $\pi_{1,L_G}^+=\pi_{1,L_H}^+ := \pi_{1,L_H}\big|_{\RZ_{L_H}^{+}}$. Similarly, we can define $\pi_{2,G}^+= \pi_{2,H}^+ := \pi_{2,H}\big|_{\RZ_{H}^{+}}$. Again, we abuse notation and use the same convention to refer to the corresponding induced maps on the (restricted) rigid analytic fibres.

Let $\beta_G: \pi_1(G)^{\Gamma} \to \pi_1(G^{\ad})^{\Gamma}$. We define the formal scheme $\RZ_{P_G}:= J_{b_{G}}(\Q_p)\RZ_{P_G}^+ = \beta_G^*(\RZ_{P_{G^{\ad}}})$. Let $\RZ_{P_G}^{\rig}$ be the rigid analytic generic fibre.

We add a few more notations:

    \begin{itemize}
        \item $\RZ_{L_G} = J_{b_G}(\Q_p)\RZ_{L_G}^+$, $\;\RZ_{L_G}^{\rig} = J_{b_G}(\Q_p)\RZ_{L_G}^{+\rig}.$ 
        \item $\RZ_{P_G} = J_{b_G}(\Q_p)\RZ_{P_G}^+$, $\;\RZ_{P_G}^{\rig} = J_{b_G}(\Q_p)\RZ_{P_G}^{+\rig}.$ 
    \end{itemize}

\begin{lemma}\label{lem:ab-fibration}
    There exists a diagram
\[\begin{tikzcd}
                                                      & \RZ_{P_G}^{\mathrm{rig}} \arrow[ld, "{\pi_{1,G}}"] \arrow[rd, "{\pi_{2,G}}"] &                      \\
\RZ_{L_G}^{\mathrm{rig}} \arrow[ru, "s_G", bend left] &                                                                              & \RZ_G^{\mathrm{rig}}\\
\end{tikzcd}\]
such that:
\begin{enumerate}
    \item $\pi_{2,G}:\RZ_{P_G}^{\rig} \to \RZ_{G}^{\rig}$ is an isomorphism. 
    \item $s_G$ is a closed immersion.
    \item There is a map $\pi_{1,G}: \RZ_{P_G}^{\rig} \to \RZ_{L_G}^{\rig}$, which is a fibration in balls.
\end{enumerate}
\end{lemma}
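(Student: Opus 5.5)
The plan is to reduce to the Hodge-type correspondence diagram of \cite[Lemma 4.3.1]{HongPublished} for the auxiliary datum $(H,[b_H],\{\mu_H\})$. First, \Cref{lem: HN ab Hodge} and \Cref{lem:Levi-abelian} allow us to choose $H$ so that it is Hodge--Newton reducible with respect to $P_H$ and $L_H$, and so that $(L_H,[b_H],\{\mu_H\})$ is associated to $(L_G,[b_G],\{\mu_G\})$. Next we restrict Hong's diagram to the ``$+$'' components, using the maps $\omega_H$, $\omega_{L_H}$ and $\omega_{P_H}=\omega_H\circ\pi_{2,H}$. We must check that $\pi_{1,H}$, $\pi_{2,H}$ and $s_H$ are compatible with these maps. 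For $\pi_{2,H}$ this holds by definition of $\omega_{P_H}$. For $s_H$ and $\pi_{1,H}$ the required identity is $\omega_H\circ\pi_{2,H}\circ s_H=\varphi^\Gamma\circ\omega_{L_H}$. This should follow from functoriality of the Kottwitz map along $L_H\hookrightarrow H$, together with $\varphi(c_{b_H,\mu_H})=c_{b_H,\mu_H}$. The upshot is that the three maps send fibres over $c_{b_H,\mu_H}$ to the corresponding fibres.

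Restricting to these fibres gives maps $\pi_{1,G}^+:\RZ_{P_G}^{+\rig}\to\RZ_{L_G}^{+\rig}$, $\pi_{2,G}^+:\RZ_{P_G}^{+\rig}\to\RZ_G^{+\rig}$ and $s_G^+$. They inherit the three properties from Hong's lemma.
\begin{enumerate}
\item $\pi_{2,G}^+$ is an isomorphism, since $\pi_{2,H}$ is an isomorphism and $\RZ_{P_H}^+$ is defined as the preimage of $\RZ_H^+$.
\item $s_G^+$ is a closed immersion, being the restriction of the closed immersion $s_H$ to the closed (open-and-closed) subspace $\RZ_{L_H}^{+\rig}$.
\item $\pi_{1,G}^+$ is a fibration in balls, provided $\RZ_{P_H}^{+\rig}=\pi_{1,H}^{-1}(\RZ_{L_H}^{+\rig})$.
\end{enumerate}
The equality needed in (3) is the delicate point, because $\varphi^\Gamma$ need not be injective. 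The preimage $\pi_{1,H}^{-1}(\RZ_{L_H}^{+\rig})$ is contained in $\RZ_{P_H}^{+\rig}$, but the reverse inclusion may fail. I would try to resolve this in one of two ways: either redefine $\RZ_{P_H}^+$ via $\omega_{L_H}\circ\pi_{1,H}$, or show that $\omega_{L_H}\circ\pi_{1,H}$ is constant along the fibres of $\omega_{P_H}$. The second route would use that $\pi_{1,H}\circ s_H=\mathrm{id}$ and that the fibres of $\pi_{1,H}$ are connected balls.

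Finally, we spread the diagram out by the $J_{b_G}(\Q_p)$-action, using $\RZ_G=J_{b_G}(\Q_p)\RZ_G^+\cong\bigsqcup_{J_{b_G}(\Q_p)/J_{b_G}(\Q_p)^+}\RZ_G^+$ and the analogous decompositions defining $\RZ_{P_G}$ and $\RZ_{L_G}$. Here $J_{b_G}=J_{b_{L_G}}$, since $b_G$ is central in the Levi of which $J_{b_G}$ is an inner form, and that Levi lies in $L_G$ by Hodge--Newton reducibility. The maps $\pi_{1,H}$, $\pi_{2,H}$ and $s_H$ are $J_{b_H}(\Q_p)$-equivariant. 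We need to transport this to $J_{b_G}(\Q_p)^+$-equivariance on the $+$-parts, which should follow from Shen's description of the $J_{b_G}(\Q_p)^+$-action on $\RZ_G^+=\RZ_H^+$ through $J_{b_{G^{\ad}}}$. Granting this, $\pi_{1,G}$, $\pi_{2,G}$ and $s_G$ are defined componentwise, and each property is local on components.

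The main obstacle is matching the index sets. We need the stabilizers of the $+$-components in $\RZ_G$, $\RZ_{P_G}$ and $\RZ_{L_G}$ to agree, so that $\pi_{2,G}$ is a bijection on components and hence globally an isomorphism. Equivalently, in terms of the $\beta_G^*$-descriptions, the sets $\pi_0$ indexed by $\pi_1(G)^\Gamma$ and $\pi_1(L_G)^\Gamma$ must be compared compatibly. For this I would first try to use $\RZ_{P_G}=\beta_G^*(\RZ_{P_{G^{\ad}}})$ together with the isomorphism $\pi_{2,G}^+$. The comparison between $\RZ_{L_G}$ and $\RZ_{P_G}$ would then only be needed component by component, which is what the fibration statement requires.
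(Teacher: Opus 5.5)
Your plan is the same as the paper's: restrict Hong's diagram for the auxiliary Hodge-type datum to the loci over $c_{b_H,\mu_H}$, then spread out by $J_{b_G}(\Q_p)$. For (2) the paper instead composes with $\pi_{2,G}$ and uses the closed embedding $\RZ_{L_G}\hookrightarrow\RZ_G$ from \Cref{prop: functoriality-ab}; your restriction of $s_H$ also works.

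The gap is in (3), and the point you flag is worse than delicate. For every proper Levi, $\RZ^{+\rig}_{P_H}\neq\pi_{1,H}^{-1}(\RZ^{+\rig}_{L_H})$, so your earlier ``upshot'' that $\pi_{1,H}$ carries the fibre over $c_{b_H,\mu_H}$ into the fibre over $c_{b_H,\mu_H}$ is wrong. To see this, note that $\ker\varphi^\Gamma=(Q^\vee_H/Q^\vee_{L_H})^\Gamma$ is the invariant part of a nonzero permutation module, hence nonzero. All fibres of $\omega_{L_H}$ are nonempty, because the Kottwitz map sends $J_{b_H}(\Q_p)$ onto $\pi_1(L_H)^\Gamma$. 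The identity $\omega_{P_H}=\varphi^\Gamma\circ\omega_{L_H}\circ\pi_{1,H}$ then gives
\[
\RZ^{+}_{P_H}=\bigsqcup_{x\in c_{b_H,\mu_H}+\ker\varphi^\Gamma}\pi_{1,H}^{-1}\bigl(\omega_{L_H}^{-1}(x)\bigr),
\]
an infinite disjoint union of nonempty pieces. For example, for $\GL_n\supset\GL_a\times\GL_b$, the locus where the quasi-isogeny has height $h$ maps onto every piece of bi-height $(h_1,h_2)$ with $h_1+h_2=h$.

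Two consequences follow.
\begin{enumerate}
\item Route (b) cannot work. Connectedness of the balls only gives constancy along fibres of $\pi_{1,H}$, which is just the identity above.
\item The requirement in your last paragraph that the stabilizers of the $+$-components of $\RZ_G$ and $\RZ_{L_G}$ agree is false. The stabilizer of $\RZ^+_{L_G}$ is a proper subgroup of $J_{b_G}(\Q_p)^+$; via the Kottwitz map the quotient is identified with $(Q^\vee_G/Q^\vee_{L_G})^\Gamma\cong\ker\varphi^\Gamma$.
\end{enumerate}
The paper's written argument for (3) does not settle this either. Its displayed chain concludes $\pi_{1,H}(\omega_{P_H}^{-1}(c_{b_H,\mu_H}))\subseteq\omega_{L_H}^{-1}(c_{b_H,\mu_H})$, which would require $\varphi^\Gamma$ to be injective. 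So your suspicion is justified, but the proposal stops exactly where the real work begins.

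Route (a) points in the right direction but is not yet an argument. After redefining $\RZ^+_{P_H}:=\pi_{1,H}^{-1}(\RZ^+_{L_H})$, the restriction of $\pi_{2,H}$ is only an open and closed immersion into $\RZ^+_H$, not an isomorphism. Recovering (1) after spreading out then needs precisely the comparison of index sets that you got wrong.

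The missing ingredient is this. The group $J_{b_G}(\Q_p)^+$ should permute the pieces $\omega_{L_H}^{-1}(x)$, for $x\in c_{b_H,\mu_H}+\ker\varphi^\Gamma$, transitively. The stabilizer should be the stabilizer $J^+_{L_G}$ of $\RZ^+_{L_G}$, and the action should be compatible with Shen's action on $\RZ^+_H$ through the adjoint group. Equivalently, $J_{b_G}(\Q_p)^+\times^{J^+_{L_G}}\RZ^+_{L_G}\cong\bigsqcup_x\omega_{L_H}^{-1}(x)$.

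Granting this, you can define $\pi_{1,G}$ on $\RZ^{+\rig}_{P_G}$ with target $\RZ^{\rig}_{L_G}$ rather than $\RZ^{+\rig}_{L_G}$. It is then a fibration in balls by Hong's lemma. All three maps can be spread out over the single index set $J_{b_G}(\Q_p)/J_{b_G}(\Q_p)^+$ already used for $\RZ_G$ and $\RZ_{P_G}$.
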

\begin{proof}
\begin{enumerate}
    \item 
    Recall that $\pi_{2,H}: \RZ_{P_H} \to \RZ_{H}$ is a local isomorphism which induces the isomorphism $\pi_{2,H}: \RZ_{P_H}^{\mathrm{rig}} \xrightarrow{\cong} \RZ_{H}^{\mathrm{rig}}$ \cite[4.2.4]{HongPublished}. We restrict to obtain the local isomorphism $\pi_{2,G}^+: \RZ_{P_G}^+ \to \RZ_G^+$, which induces $\pi_{2,G}^+: \RZ_{P_G}^{+ \mathrm{rig}} \xrightarrow{\cong} \RZ_G^{+\mathrm{rig}}$. We thus find that $\pi_{2,G}: \RZ_{P_G} = J_{b_G}(\Q_p)\RZ_{P_G}^+ \to J_{b_G}(\Q_p)\RZ_G^+ = \RZ_G$ is also a local isomorphism, thus inducing the isomorphism $\pi_{2,G}:\RZ_{P_G}^{\rig} \xrightarrow{\cong} \RZ_{G}^{\rig}$.

    \item The composition $\pi_{2,G} \circ s_G: \RZ_{L_G}^{\rig} \hookrightarrow \RZ_{G}^{\rig}$ is the closed embedding on the rigid analytic generic fibre that is induced by the closed embedding $(L_G, [b_G], \{\mu_{G}\}) \hookrightarrow (G, [b_G], \{\mu_G\})$ as per \Cref{prop: functoriality-ab}. It follows that $s_G$ is a closed immersion.
    \item We first prove this for the $+$-closed formal subschemes:
    \begin{align*}
        \pi_{1,G}^+(\RZ^{+\rig}_{P_G}) = \pi_{1,H}^+(\RZ^{+\rig}_{P_H}) = \pi_{1,H}^+(\omega_{P_H}^{-1}(c_{b_H, \mu_H})) = \pi_{1,H}^+\circ (\omega_H \circ \pi_{2,H})^{-1}(\varphi(c_{b_H, \mu_{H}})) \\
        = \pi_{1,H}^+\circ \pi_{2,H}^{-1} \circ \omega_H^{-1} \circ \varphi (c_{b_H, \mu_{H}}) \subseteq \omega_{L_H}^{-1}(c_{b_H, \mu_{H}}) = \RZ_{L_H}^{+\rig} = \RZ_{L_G}^{+\rig}
    \end{align*}
    Thus, we get the map $\RZ_{P_G}^{\rig} = J_{b_G}(\Q_p) \pi_{1,G}^+(\RZ^{+\rig}_{P_G}) \xrightarrow{\pi_{1,G}} J_{b_G}(\Q_p) \RZ_{L_G}^{+\rig} = \RZ_{L_G}^{\rig}$.
    
    It now suffices to show that $\pi_{1,G}^+$ is a fibration in balls. This follows from \cite[Lemma 4.3.1]{HongPublished} by restricting to $\RZ_{P_H}^{+\rig} = \RZ_{P_G}^{+\rig}$. 
\end{enumerate}
\end{proof}

\section{Cohomological considerations}\label{Sec: 10}

We again consider the correspondence diagram in \cite[\S 4]{HongPublished} for a Rapoport--Zink space $\RZ_G$ attached to an unramified local Shimura datum $(G,[b_G],\{\mu_G\})$ of abelian type. We write $$\H^i_{\et}(\RZ_{G}^{K_G}):=\H^i_{c}(\RZ_{G}^{K_G}\otimes_{\breve{\Q}_p}\C_p,\Q_{\ell}(\dim \RZ_{G,b}^{K_G}))$$ for any compact open subgroup $K_G$ of $G(\Q_p)$. Let $E^{\mathrm{opt}}$ be the finite extension of $E_G$ constructed in \Cref{thm: wdd-Shen}. We get a representation of $G(\Q_p)\times J_{b_G}(\Q_p)\times \cW_{E_G^{\opt}}$ on $\Q_{\ell}$-vector spaces by setting:
\begin{equation*}
    \H^i(\RZ_G^{\infty}):=\varinjlim_{K_G\subset G(\Q_p)} \H^i_{\et}(\RZ_{G}^{K_G}).
\end{equation*}
For an admissible $\ell$-adic representation $\rho$ of $J_{b_G}(\Q_p)$, we write
\begin{equation*}
    \H^{i,j}(\RZ_{G}^\infty)_\rho := \varinjlim_{K_G \subset G(\Q_p)}\;\Ext^{j}_{J_{b_G}(\Q_p)}(\H^i(\RZ_{G}^{K_G}),\rho).
\end{equation*}
Each $\H^{i,j}(\RZ_{G}^\infty)_\rho$ has a natural action of $G(\Q_p) \x \cW_{E_G^{\opt}}$. We get a virtual representation:
    
\[\H^\bullet(\RZ^\infty_{G})_\rho := \sum_{i,j \geq 0}(-1)^{i+j}\H^{i,j}(\RZ_{G}^\infty)_\rho \\\]
\begin{definition}
    For any compact open subgroup $K_G\subset G(\Q_p)$, we define rigid-analytic space $\RZ_G^{+,K_G}:= \RZ_G^{+}\times_{\RZ_G}\RZ_G^{K_G}$. This construction gives a projective system $\{\RZ_G^{+,K_G}\}_{K_G\subset G(\Q_p)}$ of rigid-analytic spaces over $\breve{\Q}_p$, where the transition maps are finite \'etale.
\end{definition}

 We now introduce cohomological analogues for the formal subschemes:
 \begin{enumerate}
     \item \[ \H^i(\RZ_G^+):=\varinjlim_{K_G\subset G(\Q_p)} \H^i_{\et}(\RZ_{G}^{+, K_G})\]
     \item \[\H^{i,j}(\RZ_{G}^{+,\infty})_\rho := \varinjlim_{K_G \subset G(\Q_p)}\;\Ext^{j}_{J_b(\Q_p)}(\H^i(\RZ_{G}^{+, K_G}),\rho)\]
     \item \[\H^\bullet(\RZ^{+ , \infty}_{G})_\rho := \sum_{i,j \geq 0}(-1)^{i+j}\H^{i,j}(\RZ_{G}^{+,  \infty})_\rho \]
 \end{enumerate}
   
Similarly, we can define $\H^\bullet(\RZ^{+,\infty}_{L_G})_\rho$, as well as $\H^\bullet(\RZ^\infty_{P_G})_\rho$ and $\H^\bullet(\RZ^{+,\infty}_{P_G})_\rho$.

\subsection{Formal subschemes}

We have the following result from \cite{HongPublished}:

For every integer $m>0$, there exists a formal scheme $\psi_H:\RZ_{P_H}^{(m)} \to \RZ_{P_H}$ with the following properties:
\begin{enumerate}
    \item For any $R \in \mathrm{Nilp}_{\breve{\Z}_p}$, a morphism $f: \spf\;(R) \to \RZ_{P_H}$ factors through $\RZ_{P_H}^{(m)}$ if and only if the filtration $f^*\cX^{\bullet}_{P_H}[p^m]$ splits.
    \item $\RZ_{P_H}$ and $\RZ_{P_H}^{(m)}$ are isomorphic as $\RZ_{L_H}$-formal schemes, via $\pi_{1,H}: \RZ_{P_H} \to \RZ_{L_H}$.
\end{enumerate}

Let $K_{H,(m)}' := \ker(P_H(\Z_p) \to P_H(\Z_p/p^m\Z_p))$. Let $\cP_{H,m}$ and $\cP'_{H,m}$ be two distinct covers of $\RZ_{P_H}^{(m)}$, defined by two Cartesian diagrams:

\begin{center}
   \begin{tikzcd}
{\cP_{H,m}} \arrow[r] \arrow[d]     & {\RZ_{P_H}^{\rig, (m)}} \arrow[d] &  & {\cP'_{H,m}} \arrow[d] \arrow[r]  & {\RZ_{P_H}^{K_{H, (m)}'}} \arrow[d, "{\pi_{1,H}}"] \\
{\RZ_{P_H}^{K_{H, (m)}'}} \arrow[r] & \RZ_{P_H}^{\rig}                  &  & {\RZ_{L_H}^{\rig, (m)}} \arrow[r] & \RZ_{L_H}^{\rig}                                  
\end{tikzcd}
\end{center}

Let $D_H = \dim(\RZ_{P_H}) - \dim(\RZ_{L_H})$. By \cite[Prop 4.3.2]{HongPublished}, we have the following quasi-isomorphism for every integer $m>0$:

\[R\Gamma_c(\RZ_{P_H}^{K_{H, (m)}'} \otimes_{\breve{\Q}_p} \C_p, \overline{\Q}_\ell) \cong R\Gamma_c(\RZ_{L_H}^{K_{H, (m)}'}\otimes_{\breve{\Q}_p} \C_p, \overline{\Q}_\ell(-D_H))[-2D_H] \]
This yields 
\begin{align*}
    \mathrm{H}^\bullet(\RZ_{L_{H}}^{\infty})_\rho \cong \mathrm{H}^\bullet(\RZ_{P_{H}}^{\infty})_\rho.
    \end{align*}

Recall that we set $\RZ_{L_G}^+ := \RZ_{L_H}^+$ (thus giving rise to $\RZ_{L_G}^{+, (m)}$) and $\RZ_{P_G}^+:= \RZ_{P_H}^+$. We define a formal subscheme $\RZ_{P_G}^{+, (m)} := \psi_H^{-1}\circ \omega_{P_H}^{-1}(c_{b_H, \mu_H})$, equipped with the map $\psi_G^+: \RZ_{P_G}^{+, (m)} \to \RZ_{P_G}^+$. 

The following corollary is a consequence of \Cref{lem:ab-fibration}. 
\begin{proposition} \label{prop: + level Levi and parabolic}
    There is an isomorphism between the following virtual $\ell$-adic representations of $P_G(\Q_p) \times \cW_{E_G^{\opt}}$:
    \begin{align*}
    \mathrm{H}^\bullet(\RZ_{L_{G}}^{+,\infty})_\rho \cong \mathrm{H}^\bullet(\RZ_{P_{G}}^{+,\infty})_\rho
    \end{align*}
\end{proposition}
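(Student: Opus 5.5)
The plan is to transport Hong's comparison \cite[Prop 4.3.2]{HongPublished} for the Hodge-type pair $(\RZ_{P_H},\RZ_{L_H})$ to the $+$-components. Since $\RZ_{P_G}^+=\RZ_{P_H}^+$ and $\RZ_{L_G}^+=\RZ_{L_H}^+$ are open and closed formal subschemes cut out by fibres of $\omega_{P_H}$ and $\omega_{L_H}$, the comparison should restrict to them. Concretely, I first fix $m>0$ and restrict the two Cartesian squares defining $\cP_{H,m}$ and $\cP'_{H,m}$ along $\RZ_{P_G}^{+,(m)}=\psi_H^{-1}\omega_{P_H}^{-1}(c_{b_H,\mu_H})$ and $\RZ_{L_G}^{+,(m)}$. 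This gives covers $\cP^+_{G,m}$ and $\cP'^{+}_{G,m}$. Because base change commutes with taking preimages of an open and closed subset, these are unions of connected components of $\cP_{H,m}$ and $\cP'_{H,m}$.

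Next I check that the identification $\RZ_{P_H}^{(m)}\cong\RZ_{P_H}$ over $\RZ_{L_H}$ respects the $+$-loci, so that it induces $\RZ_{P_G}^{+,(m)}\cong\RZ_{P_G}^+$ over $\RZ_{L_G}^+$. This uses the compatibility shown in the proof of \Cref{lem:ab-fibration}(3), namely $\pi_{1,H}(\RZ_{P_H}^+)\subseteq\RZ_{L_H}^+$ via $\varphi(c_{b_H,\mu_H})=c_{b_H,\mu_H}$. Hong's proof of the quasi-isomorphism
\[R\Gamma_c(\RZ_{P_H}^{K'_{H,(m)}}\otimes\C_p,\overline{\Q}_\ell)\cong R\Gamma_c(\RZ_{L_H}^{K'_{H,(m)}}\otimes\C_p,\overline{\Q}_\ell(-D_H))[-2D_H]\]
is a chain of identifications $\RZ_{P_H}^{K'}\leftarrow\cP_{H,m}\cong\cP'_{H,m}\to\RZ_{L_H}$ together with the fact that $\pi_{1,H}$ is a fibration in balls of relative dimension $D_H$. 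Each step is local over the base and compatible with $\omega$. So the same argument, applied to the $+$-pieces using \Cref{lem:ab-fibration}(3) (restriction of the ball fibration), yields the analogous quasi-isomorphism for $\RZ_{P_G}^{+,K'}$ and $\RZ_{L_G}^{+,K'}$, with the same $D_H$.

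I then pass to $\Ext^j_{J_{b}(\Q_p)}(-,\rho)$ and take the colimit over $m$; the subgroups $K'_{H,(m)}$, transported to $K'$ via $G\to G^{\ad}\cong H^{\ad}$, are cofinal. The twist and shift cancel in the alternating sum after normalizing by dimension, exactly as in the Hodge case. Equivariance for $P_G(\Q_p)$ follows because all maps are compatible with the Hecke action of the parabolic. Equivariance for $\cW_{E^{\opt}}$ follows from \Cref{thm: wdd-Shen}, since $\Phi^{\mathrm{Shen}}$ is built from $\Phi_H$ and respects the fibres of $\omega$.

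The main obstacle is this last point together with the group actions. The $+$-loci are stabilized only by $J_{b_G}(\Q_p)^+$ and by the subgroup of $P_G(\Q_p)$ acting trivially on $\pi_1(G)^\Gamma$, and the Weil descent datum could a priori permute components of $\omega$. I would first try to show that the relevant stabilizers act compatibly on both sides. Failing that, I would interpret $\rho$ and $P_G(\Q_p)$ through the induced action on the disjoint union $J_{b_G}(\Q_p)\times^{J^+}(\cdot)$, verifying equivariance there and then restricting.
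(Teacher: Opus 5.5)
Your route is the paper's route: restrict Hong's two Cartesian squares and the quasi-isomorphism of \cite[Prop.~4.3.2]{HongPublished} to the $+$-loci. The step it rests on fails, however.

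\textbf{The failing step.} You need $\RZ^{(m)}_{P_H}\cong\RZ_{P_H}$ to induce $\RZ^{+,(m)}_{P_G}\cong\RZ^+_{P_G}$ \emph{over $\RZ^+_{L_G}$}. That is, you need $\pi_{1,H}(\RZ^+_{P_H})\subseteq\RZ^+_{L_H}$, and this does not follow from $\varphi(c_{b_H,\mu_H})=c_{b_H,\mu_H}$.

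Since the Kottwitz map kills the unipotent radical, $\omega_H\circ\pi_{2,H}=\varphi\circ\omega_{L_H}\circ\pi_{1,H}$. Hence
\[
\RZ^+_{P_H}=\pi_{1,H}^{-1}\bigl(\omega_{L_H}^{-1}(\varphi^{-1}(c_{b_H,\mu_H}))\bigr),
\]
and $\varphi^{-1}(c_{b_H,\mu_H})$ is a torsor under $\ker(\varphi^\Gamma)=(Q^\vee_H/Q^\vee_{L_H})^\Gamma$. This is a permutation module on the simple coroots not in $L_H$, so it is a nonzero free abelian group whenever $L_H$ is proper.

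\textbf{A concrete counterexample.} Take $H=\GL_2$, $L_H=T$ the diagonal torus, and $b$ ordinary. Then $\RZ^+_H$ has one connected component for each pair of heights $(a,h-a)$, $a\in\Z$, each lying over a different point of $\RZ_T$. By contrast, $\RZ^+_T$ is a single point.

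So running Hong's argument on the $+$-pieces, with $\RZ^{c'}_{L_H}:=\omega_{L_H}^{-1}(c')$, actually gives
\[
R\Gamma_c(\RZ^{+,K'}_{P_G}\otimes\C_p,\overline{\Q}_\ell)\cong\bigoplus_{c'\in\varphi^{-1}(c_{b_H,\mu_H})}R\Gamma_c\bigl(\RZ^{c',K'}_{L_H}\otimes\C_p,\overline{\Q}_\ell(-D_H)\bigr)[-2D_H].
\]
This is an infinite direct sum, not the cohomology of $\RZ^{+,K'}_{L_G}$. The paper's proof has the same problem. The displayed chain in the proof of \Cref{lem:ab-fibration}(3) places $\pi_{1,H}(\omega_{P_H}^{-1}(c_{b_H,\mu_H}))$ inside $\omega_{L_H}^{-1}(c_{b_H,\mu_H})$ by the same conflation. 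You are following the paper faithfully, but that step is not valid as written.

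\textbf{What is actually missing.} The group-theoretic issue you postpone to the end is not a check to do afterwards; it is the missing idea. The part of $J_b(\Q_p)$ stabilizing $\RZ^+_{P}$ is in general strictly larger than the part stabilizing $\RZ^+_{L}$. The extra components above are translates of $\RZ^+_{L}$ under it; in the example, under $\mathrm{diag}(p^a,p^{-a})\in T(\Q_p)$.

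To rescue the statement you would have to do two things. First, define the $\Ext$-groups of the $+$-pieces over the respective stabilizers. As you suspect, $J_{b_G}(\Q_p)$ itself does not act on $\H^i(\RZ^{+,K}_G)$, and neither does all of $P_G(\Q_p)$. Second, identify the direct sum above with a compact induction from the smaller stabilizer to the larger one, and collapse it with Shapiro's lemma.

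Alternatively, drop the $+$-loci altogether and run Hong's comparison $J_{b_G}(\Q_p)$-equivariantly on $\RZ_{P_G}$ and $\RZ_{L_G}$ directly. As it stands, plain restriction to the $+$-loci does not yield the stated isomorphism.
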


\begin{proof}

We get natural diagrams:

\begin{center}
   \begin{tikzcd}
{\cP^+_{G,m}} \arrow[r] \arrow[d]     & {\RZ_{P_G}^{+\rig, (m)}} \arrow[d] &  & {\cP'^+_{G,m}} \arrow[d] \arrow[r]  & {\RZ_{P_G}^{+K_{G, (m)}'}} \arrow[d, "{\pi^+_{1,G}}"] \\
{\RZ_{P_G}^{+K_{G, (m)}'}} \arrow[r] & \RZ_{P_G}^{+\rig}                  &  & {\RZ_{L_G}^{+\rig, (m)}} \arrow[r] & \RZ_{L_G}^{+\rig}                                  
\end{tikzcd}
\end{center}

Let $D^+_{G} = \dim(\RZ^+_{P_G}) - \dim(\RZ^+_{L_G})$. It follows that we have the following quasi-isomorphism for every integer $m>0$:

\[R\Gamma_c(\RZ_{P_G}^{+K_{H, (m)}'} \otimes_{\breve{\Q}_p} \C_p, \overline{\Q}_\ell) \cong R\Gamma_c(\RZ_{L_G}^{+K_{H, (m)}'}\otimes_{\breve{\Q}_p} \C_p, \overline{\Q}_\ell(-D^+_{G}))[-2D^+_{G}] \]

This yields 
\begin{align*}
    \mathrm{H}^\bullet(\RZ_{L_{G}}^{+,\infty})_\rho \cong \mathrm{H}^\bullet(\RZ_{P_{G}}^{+,\infty})_\rho
\end{align*}
\end{proof}

\begin{corollary}\label{cor: Levi and parabolic abelian} 
    There is an isomorphism between the following virtual $\ell$-adic representations of $P_G(\Q_p) \times \cW_{E_G^{\opt}}$:
    \begin{align*}
    \mathrm{H}^\bullet(\RZ_{L_{G}}^{\infty})_\rho \cong \mathrm{H}^\bullet(\RZ_{P_{G}}^{\infty})_\rho
    \end{align*}
\end{corollary}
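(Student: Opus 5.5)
The plan is to deduce the corollary from Proposition \ref{prop: + level Levi and parabolic} by spreading the $+$-level statement out along the $J_{b_G}(\Q_p)$-translates. By construction,
\begin{equation*}
\RZ_{L_G}=J_{b_G}(\Q_p)\RZ_{L_G}^+\cong\bigsqcup_{J_{b_G}(\Q_p)/J_{b_G}(\Q_p)^+}\RZ_{L_G}^+ ,
\qquad
\RZ_{P_G}=J_{b_G}(\Q_p)\RZ_{P_G}^+\cong\bigsqcup_{J_{b_G}(\Q_p)/J_{b_G}(\Q_p)^+}\RZ_{P_G}^+ .
\end{equation*}
First I will check that the stabilizers agree: $\pi_{1,G}$ is $J_{b_G}(\Q_p)$-equivariant and maps $\RZ_{P_G}^+$ into $\RZ_{L_G}^+$ (as in the proof of \Cref{lem:ab-fibration}(3)), so both decompositions are indexed by the same coset space. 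The same decomposition then holds at each level $K_G$, since $\RZ^{K_G}_{L_G}$ and $\RZ^{K_G}_{P_G}$ are pulled back from the rigid fibres, and the transition maps respect the components.

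Next I will pass to compactly supported cohomology. For each $K_G$ this gives
\begin{equation*}
\H^i_{\et}(\RZ_{L_G}^{K_G})\cong \mathrm{c\text{-}Ind}_{J_{b_G}(\Q_p)^+}^{J_{b_G}(\Q_p)}\H^i_{\et}(\RZ_{L_G}^{+,K_G}),
\end{equation*}
and likewise for $P_G$. The comparison $R\Gamma_c(\RZ_{P_G}^{+K'_{(m)}})\cong R\Gamma_c(\RZ_{L_G}^{+K'_{(m)}})(-D_G^+)[-2D_G^+]$ from the proof of Proposition \ref{prop: + level Levi and parabolic} comes from the fibration in balls $\pi^+_{1,G}$. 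That map is $J_{b_G}(\Q_p)^+$-equivariant, and it is equivariant for the relevant level groups and for the Weil descent datum $\Phi^{\mathrm{Shen}}$ (which respects $\omega_G$, see the remark after \Cref{thm: wdd-Shen}). Therefore the comparison induces a $J_{b_G}(\Q_p)$-equivariant quasi-isomorphism at full level after compact induction, and the dimension shift $D_G^+$ is uniform across components.

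Finally I will apply $\Ext^j_{J_{b_G}(\Q_p)}(-,\rho)$ and take the colimit over $K_G$, using Frobenius reciprocity:
\begin{equation*}
\Ext^j_{J_{b_G}(\Q_p)}\bigl(\mathrm{c\text{-}Ind}\,V,\rho\bigr)\cong \Ext^j_{J_{b_G}(\Q_p)^+}(V,\rho|_{J^+}).
\end{equation*}
This identifies the $\rho$-twisted virtual representations on both sides with the corresponding $+$-level objects. The Tate twist and the even shift cancel in the alternating sum, exactly as in Hong's argument, which gives the stated isomorphism as virtual $P_G(\Q_p)\times\cW_{E_G^{\opt}}$-representations.

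The main obstacle is the equivariance bookkeeping. The $+$-level spaces only carry actions of $J_{b_G}(\Q_p)^+$ and of a subgroup of $P_G(\Q_p)$ preserving the fibre of $\omega$, so the $P_G(\Q_p)$- and Weil-actions on the full level must be recovered from the components. To handle this, I will first realize the full-level isomorphism directly, by applying the fibration-in-balls argument to $\pi_{1,G}$ on all of $\RZ_{P_G}^{\rig}$ componentwise. This avoids inducing representations, and it makes the $P_G(\Q_p)$- and $\cW_{E^{\opt}}$-compatibility follow from the naturality of $\pi_{1,G}$ and $\Phi^{\mathrm{Shen}}$.
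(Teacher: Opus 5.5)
Your last paragraph is exactly what the paper's (very brief) proof does. It writes down the $J_{b_G}(\Q_p)$-translates of the two Cartesian diagrams from the $+$-level proposition, so that $\pi_{1,G}$ becomes a ball fibration on all of $\RZ_{P_G}^{\rig}$, and it runs the Mantovan--Hong comparison directly at full level. The compact-induction route that forms the body of your proposal, however, rests on a false step.

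\textbf{The stabilizers do not agree.} You claim $\RZ^+_{L_G}$ and $\RZ^+_{P_G}$ have the same stabilizer in $J=J_{b_G}(\Q_p)$. In general they do not.
\begin{itemize}
\item $\RZ^+_{L_G}=\omega_{L_H}^{-1}(c_{b_H,\mu_H})$ is a fibre over a point of $c_{b_H,\mu_H}\pi_1(L_H)^\Gamma$.
\item $\RZ^+_{P_G}$ is a fibre of $\omega_H\circ\pi_{2,H}=\varphi\circ\omega_{L_H}\circ\pi_{1,H}$ over a point of $c_{b_H,\mu_H}\pi_1(H)^\Gamma$.
\item $\ker\varphi=Q^\vee_H/Q^\vee_{L_H}\neq 0$ for a proper Levi.
\end{itemize}
So the true relation is $\pi_{1,H}^{-1}(\RZ^+_{L_H})\subseteq\RZ^+_{P_H}$, and $\pi_{1,H}(\RZ^+_{P_H})$ is a union of several $J$-translates of $\RZ^+_{L_H}$. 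Correspondingly $J^{+,L}=\ker\bigl(J\to\pi_1(L)^\Gamma\bigr)\subseteq J^{+,G}=\ker\bigl(J\to\pi_1(G)^\Gamma\bigr)$, typically strictly.

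Already $G=\GL_2$, $\mu=(1,0)$, $b$ $\mu$-ordinary, $L=T$ diagonal shows this. Here $\RZ_G=\bigsqcup_{(a_1,a_2)\in\Z^2}\Spf W[[t]]$ with $\omega_G=a_1+a_2$, and $\RZ_L=\bigsqcup_{\Z^2}\Spf W$. So $\RZ^+_P\cong\RZ^+_G$ consists of the components with $a_1+a_2=c$, which $\pi_1$ sends to infinitely many distinct points, while $\RZ^+_L$ is one point. Thus $J/J^{+,G}\cong\Z$ but $J/J^{+,L}\cong\Z^2$.

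The containment $\pi_{1,G}(\RZ^+_{P_G})\subseteq\RZ^+_{L_G}$ that you cite from the proof of Lemma~\ref{lem:ab-fibration}(3) fails for the same reason. Even granting it, equivariance would only give $\mathrm{Stab}(\RZ^+_{P_G})\subseteq\mathrm{Stab}(\RZ^+_{L_G})$, not equality.

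\textbf{Consequence and repair.} After Frobenius reciprocity you are comparing $\Ext^j_{J^{+,L}}(\H^i(\RZ^{+}_{L_G}),\rho)$ with $\Ext^j_{J^{+,G}}(\H^i(\RZ^{+}_{P_G}),\rho)$. The $+$-level comparison cannot be an isomorphism of $J^+$-representations, because one side is induced from the other. There are two ways to fix this.
\begin{itemize}
\item Use the correct $+$-level statement $R\Gamma_c(\RZ^{+,K'}_{P_G})\simeq\mathrm{c\text{-}Ind}_{J^{+,L}}^{J^{+,G}}R\Gamma_c(\RZ^{+,K'}_{L_G})(-D)[-2D]$, then induce in stages to $J$.
\item More simply, drop the $+$-level and argue at full level as in your final paragraph. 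There the indexing of components plays no role. $\pi_{1,G}\colon\RZ^{\rig}_{P_G}\to\RZ^{\rig}_{L_G}$ (defined by $j\cdot x\mapsto j\cdot\pi_{1,H}(x)$, which is well defined since $\pi_{1,H}$ is $J^{+,G}$-equivariant) is a $J$-equivariant fibration in balls, compatible with the tower and the Weil descent datum. The argument of \cite[Prop.~4.3.2]{HongPublished} then applies verbatim.
\end{itemize}
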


\begin{proof}

    We have Cartesian diagrams:

\begin{center}
    \begin{tikzcd}
{J_{b_G}(\Q_p)\cP^+_{G,m}} \arrow[r] \arrow[d]    & {\RZ_{P_G}^{\rig, (m)}} \arrow[d] &  & {J_{b_G}(\Q_p)\cP'^+_{G,m}} \arrow[d] \arrow[r]     & {\RZ_{P_G}^{K_{G, (m)}'}} \arrow[d, "{\pi_{1,G}}"] \\
{\RZ_{P_G}^{K_{G, (m)}'}} \arrow[r] & \RZ_{P_G}^{\rig}                  &  & {\RZ_{L_G}^{\rig, (m)}} \arrow[r] & \RZ_{L_G}^{\rig}                    \end{tikzcd}
\end{center}

Here, $\;\RZ_{P_G}^{K_{G, (m)}'} = J_{b_G}(\Q_p)\RZ_{P_G}^{+,K_{G, (m)}'}$. In particular, we get the following isomorphism:

\begin{align*}
   \mathrm{H}^\bullet(\RZ_{P_{G}}^{\infty})_\rho \cong \mathrm{H}^\bullet(\RZ_{L_{G}}^{\infty})_\rho
\end{align*}

\end{proof}

\subsection{Adjoint level}

We note that $\RZ_{L_{H^{\ad}}}: = \RZ_{L_H}/X_*(Z_H)^{\Gamma}$ is distinct from but related to the formal space $\RZ_{(L_H)^{\ad}} = \RZ_{L_H}/X_*(Z_{L_H})$. 

\begin{lemma} Suppose the maps $\beta_H: \pi_1(H)^{\Gamma} \to \pi_1(H^{\ad})^{\Gamma}$ and $\beta_G: \pi_1(G)^{\Gamma} \to \pi_1(G^{\ad})^{\Gamma}$ are surjective.
    \begin{align*}
        \RZ_{L_{G^{\ad}}} \cong \RZ_{L_{H}^{\ad}} \x_{\pi_1((L_H)^{\ad})} \pi_1(L_{H^{\ad}})^{\Gamma}
    \end{align*}
\end{lemma}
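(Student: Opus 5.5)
We aim to identify both sides as disjoint unions of copies of $\RZ^+_{L_H}$, indexed compatibly by $\pi_1(L_{H^{\ad}})^{\Gamma}$, and then match the indexing maps. The basic tool is the earlier lemma identifying $\RZ^+$ with quotients by $\pi_1(-)^\Gamma$, combined with the Levi datum of Hodge type $(L_H,[b_H],\{\mu_H\})$ from \Cref{lem:Levi-abelian}. By analogy with the definition of $\RZ_{H^{\ad}}$, we read $\RZ_{L_{G^{\ad}}}$ as $\RZ_{L_H}/X_*(Z_H)^{\Gamma}$, where $L_{G^{\ad}}=L_{H^{\ad}}$ is the image of $L_H$ in $H^{\ad}\cong G^{\ad}$.

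First, we describe the left side. Apply the construction of $\omega_{L_H}:\RZ_{L_H}\to c_{b_H,\mu_H}\pi_1(L_H)^\Gamma$. Its fibres are all identified with $\RZ_{L_H}^+$ via the Cartesian translation diagram used in the proof of the lemma on $\RZ_G^+$. Thus $\RZ_{L_H}\cong \RZ_{L_H}^+\times \pi_1(L_H)^\Gamma$ as formal schemes over the coset. Quotienting by $X_*(Z_H)^\Gamma$ then gives
\[\RZ_{L_{G^{\ad}}}\cong \RZ^+_{L_H}\times \pi_1(L_H)^\Gamma/X_*(Z_H)^\Gamma .\]
Next, we use the short exact sequence $0\to X_*(Z_H)\to\pi_1(L_H)\to\pi_1(L_{H^{\ad}})\to 0$, obtained from Borovoi's lemma applied to $L_H\to L_H/Z_H=L_{H^{\ad}}$. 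Taking $\Gamma$-invariants, we identify $\pi_1(L_H)^\Gamma/X_*(Z_H)^\Gamma$ with $\pi_1(L_{H^{\ad}})^\Gamma$.

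Second, we treat the right side. By the same reasoning applied to $L_H\to (L_H)^{\ad}=L_H/Z_{L_H}$, together with Shen's Proposition 4.9 applied to the Hodge-type datum $(L_H,[b_H],\{\mu_H\})$, we get $\RZ_{(L_H)^{\ad}}\cong \RZ_{L_H}/X_*(Z_{L_H})^\Gamma$. This space fibres over $\pi_1((L_H)^{\ad})^\Gamma$ with fibre $\RZ_{L_H}^+$. We also have the natural map $\pi_1(L_{H^{\ad}})\to\pi_1((L_H)^{\ad})$ induced by $L_{H^{\ad}}=L_H/Z_H\to L_H/Z_{L_H}$, using $Z_H\subset Z_{L_H}$ and the group-theoretic lemma $Z(G/H)\cong Z(G)/H$. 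The fibre product $\RZ_{(L_H)^{\ad}}\times_{\pi_1((L_H)^{\ad})}\pi_1(L_{H^{\ad}})^\Gamma$ is then again $\RZ^+_{L_H}\times \pi_1(L_{H^{\ad}})^\Gamma$. Finally, we check that the two identifications agree through the natural map $\RZ_{L_H}/X_*(Z_H)^\Gamma\to \RZ_{L_H}/X_*(Z_{L_H})^\Gamma$ paired with $\omega$, which gives a canonical morphism that is bijective on fibres.

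The main obstacle is surjectivity on $\Gamma$-invariants, namely $\pi_1(L_H)^\Gamma\to\pi_1(L_{H^{\ad}})^\Gamma$, and the analogous map for $(L_H)^{\ad}$ needed to invoke Shen's Proposition 4.9 for the Levi. The hypotheses only give surjectivity of $\beta_H$ and $\beta_G$. We would try to deduce surjectivity for the Levi from the surjectivity of $\beta_H$ and the decomposition $\pi_1(L_H)=\pi_1(H)\oplus(\text{coroots of }H\text{ not in }L_H)$ modulo torsion, up to a $\Gamma$-stable splitting. Since $H$ is unramified, $\Gamma$ acts through a finite cyclic group permuting simple coroots, so the $H^1$ obstruction should vanish on the induced permutation module. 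If this fails, the fallback is to restrict the statement to the images, which still suffices for the cohomological comparison.
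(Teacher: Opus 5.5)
Your architecture is close to the paper's in substance. The map you use at the end, $[x]\mapsto([x],\omega(x))$ from $\RZ_{L_H}/X_*(Z_H)^{\Gamma}$ to $(\RZ_{L_H}/X_*(Z_{L_H})^{\Gamma})\times_{\pi_1((L_H)^{\ad})^\Gamma}\pi_1(L_{H^{\ad}})^{\Gamma}$, is the map implicit in \Cref{lem:formal-scheme-quotient}. The paper packages it differently: it applies that lemma to the single sequence $0\to X_*(Z_{L_H})^\Gamma/X_*(Z_H)^\Gamma\to\pi_1(L_{H^{\ad}})^\Gamma\to\pi_1((L_H)^{\ad})^\Gamma\to 0$ coming from $Z_{L_H}/Z_H\to L_{H^{\ad}}\to(L_H)^{\ad}$. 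You instead use two Borovoi sequences out of $L_H$ and a decomposition into copies of $\RZ^+_{L_H}$. However, your proof is not complete. It stops at the obstacle you name, and your sketched repair does not reach the surjectivity that matters most.

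You need two surjectivities on $\Gamma$-invariants:
\begin{itemize}
\item[(a)] $\pi_1(L_H)^\Gamma\to\pi_1(L_{H^{\ad}})^\Gamma$, to identify the index set on the left;
\item[(b)] $\pi_1(L_H)^\Gamma\to\pi_1((L_H)^{\ad})^\Gamma$, which is the hypothesis of Shen's Proposition 4.9 for $(L_H,[b_H],\{\mu_H\})$. Only this justifies $\RZ_{(L_H)^{\ad}}\cong\RZ_{L_H}/X_*(Z_{L_H})^\Gamma$, which the paper also uses tacitly.
\end{itemize}

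Your coroot idea does prove (a), with no splitting and no passage modulo torsion. The maps $\pi_1(L_H)\to\pi_1(H)$ and $\pi_1(L_{H^{\ad}})\to\pi_1(H^{\ad})$ have the same kernel $Q^\vee_H/Q^\vee_{L_H}$. This is a permutation module on the simple coroots outside $L_H$, so $\H^1(\Gamma,-)$ of it vanishes. A four-lemma chase with $\beta_H$ then gives (a).

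For (b), the argument breaks down. The kernel is now $X_*(Z_{L_H})$, and there is no analogous comparison with $H$, since $H/Z_{L_H}$ is not a group. Working modulo torsion cannot help either: $\pi_1((L_H)^{\ad})^\Gamma$ is finite, and the obstruction lives in the torsion group $\H^1(\Gamma,X_*(Z_{L_H}))$.

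The missing step is to factor (b) through $L_{H^{\ad}}$. The group $Z(L_{H^{\ad}})=Z_{L_H}/Z_H$ is the center of a Levi of the adjoint group $H^{\ad}$. Hence $X_*(Z_{L_H}/Z_H)=\bigoplus_{\alpha\in\Delta\setminus\Delta_{L_{H^{\ad}}}}\Z\varpi^\vee_\alpha$ is spanned by fundamental coweights, which $\Gamma$ permutes. Therefore $\H^1(\Gamma,X_*(Z_{L_H}/Z_H))=0$, and $\pi_1(L_{H^{\ad}})^\Gamma\to\pi_1((L_H)^{\ad})^\Gamma$ is onto. Composing with (a) gives (b).

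This vanishing, rather than anything about $Z_H$, is also what actually justifies the right-exactness on $\Gamma$-invariants asserted in the paper's proof. Finally, your fallback of restricting to images would change the statement rather than prove it.
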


\begin{proof}
Recall that $L_{H^{\ad}}$ is the image of $L_H$ under the projection $H \to H^{\ad}$, and can be identified with $L_H/Z_H$. By the third isomorphism theorem, we see:
\begin{align*}
    (L_H)^{\ad} = L_H/Z_{L_H} \cong (L_H/Z_H) / (Z_{L_H}/Z_H) = L_{H^{\ad}} / (Z_{L_H}/Z_H)
\end{align*}
In other words, there is a short exact sequence:
\begin{align*}
    0 \longrightarrow Z_{L_H}/Z_H \longrightarrow L_{H^{\ad}} \longrightarrow (L_H)^{\ad} \longrightarrow 0
\end{align*}
By \cite{Borovoi1989preprint}, we obtain the following short exact sequence by applying $\pi_1(-)$ to the sequence above:
\begin{align*}
    0 \longrightarrow X_*(Z_{L_H})/ X_*(Z_H) \longrightarrow \pi_1(L_{H^{\ad}}) \longrightarrow \pi_1((L_H)^{\ad}) \longrightarrow 0
\end{align*}
Taking the $\Gamma$-invariant subgroups is exact by the vanishing of $\H^1(\Gamma, \pi_1(Z_H))$ as in \Cref{rmk: LES vanishing}, giving us:
\begin{align*}
    0 \longrightarrow X_*(Z_{L_H})^\Gamma/ X_*(Z_H)^\Gamma \longrightarrow \pi_1(L_{H^{\ad}})^\Gamma \longrightarrow \pi_1((L_H)^{\ad})^\Gamma \longrightarrow 0
\end{align*}

Now, we consider a group-theoretic result. Let $\sX = \RZ_{L_H}$, $H_1 = X_*(Z_{L_H})^\Gamma$, $H_2 = X_*(Z_{H})^\Gamma$, $G_1 = \pi_1(L_{H^{\ad}})^\Gamma$ and $G_2 = \pi_1((L_H)^{\ad})^\Gamma $. Thus, we find:
\begin{align*}
    \RZ_{L_{G^{\ad}}} = \RZ_{L_{H^{\ad}}} \cong \RZ_{(L_{H})^{\ad}} \x_{\pi_1((L_H)^{\ad})} \pi_1(L_{H^{\ad}})^{\Gamma}.
\end{align*}

\end{proof}
\begin{lemma}\label{lem:formal-scheme-quotient}
    Let $\sX$ be a formal scheme over $W$ with actions of the group $W$-schemes $H_1$, and let $H_2$ be a subgroup of $H_1$. Suppose there is a short exact sequence
\begin{align*}
    0 \longrightarrow H_1/H_2 \longrightarrow G_1 \longrightarrow G_2 \longrightarrow 0
\end{align*}
 of group schemes over $W$. Then there is a natural isomorphism of formal schemes over $W$ 
\begin{align*}
    (\sX/H_1)\x_{G_2} G_1 \cong \sX/H_2
\end{align*}
\end{lemma}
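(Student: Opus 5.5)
We plan to construct a map $\Psi: \sX/H_2 \to (\sX/H_1)\times_{G_2} G_1$ directly, and then to check that it is an isomorphism locally on $\sX/H_1$. Throughout we read the statement as the paper uses it: $G_1$ and $G_2$ are constant (discrete) group schemes, the $G_i$ play the role of the index sets of connected components, and the fibre product is taken along a structure map $\omega:\sX/H_1\to G_2$. This is the analogue of $\omega_H$ in the lemma identifying $\RZ_G^+$ with a quotient. We also assume that the action of $H_1$ on $\sX$ is free and properly discontinuous, as it is for $X_*(Z_H)^\Gamma$ on $\RZ_H$, so that all the quotients exist as formal schemes and the maps $\sX\to\sX/H_i$ are local isomorphisms.

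\textbf{Step 1: constructing $\Psi$.} We produce two compatible maps out of $\sX/H_2$. The first is the projection $\sX/H_2\to\sX/H_1$, which exists because $H_2\subset H_1$. The second is a map $\tilde\omega:\sX/H_2\to G_1$ lifting $\omega$, in the sense that its composite with $G_1\to G_2$ equals $\omega\circ(\text{projection})$. We would obtain $\tilde\omega$ from an $H_1$-equivariant map $\sX\to G_1$ that is constant on $H_2$-orbits. In the application this is the Kottwitz-type map $\omega_{L_{H^{\ad}}}$, and its equivariance through $H_1\to H_1/H_2\hookrightarrow G_1$ comes from functoriality of the Kottwitz map. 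In the abstract lemma this equivariance must therefore be taken as part of the data, and we will state it explicitly as a hypothesis. Together the two maps define $\Psi$.

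\textbf{Step 2: $\Psi$ is an isomorphism.} We would check this on fibres over points $g_2\in G_2$. Over $g_2$, the right-hand side is $\omega^{-1}(g_2)\times\{g_1\in G_1: g_1\mapsto g_2\}$, and the set of such $g_1$ is an $H_1/H_2$-torsor by exactness of the sequence. On the left-hand side, the projection $\sX/H_2\to\sX/H_1$ is a Galois covering with group $H_1/H_2$, since the action is free. Because $\tilde\omega$ is $H_1/H_2$-equivariant, $\Psi$ is an $H_1/H_2$-equivariant map between two $H_1/H_2$-torsors over $\omega^{-1}(g_2)$. Any such equivariant map of torsors is an isomorphism, and this can be checked Zariski locally, where both torsors are trivial.

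\textbf{Main obstacle.} The main obstacle is making Step 2 honest. It requires that $\tilde\omega$ separates the $H_1/H_2$-translates of a point, so that distinct sheets of the covering are sent to distinct elements of $G_1$. This is exactly the injectivity of $H_1/H_2\to G_1$ together with the equivariance hypothesis from Step 1. My first move would be to show that the stabilizer in $H_1$ of each fibre of $\tilde\omega$ is precisely $H_2$, using the freeness of the action.
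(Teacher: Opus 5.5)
Your proposal is correct for the version of the lemma you formulate, and it argues differently from the paper.

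\textbf{The two routes.} The paper passes to \'etale sheaves by subcanonicity. It then asserts, for an abstract set $S$ with group actions, that $(S/H_1)\times_{G_2}G_1\cong S/H_2$, and concludes because this ``holds on $R$-points''. You instead construct a global morphism $\Psi=(p,\tilde\omega)$. You then show it is an isomorphism because, over each clopen piece $\omega^{-1}(g_2)$, it is a morphism of $H_1/H_2$-torsors.

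\textbf{What your route buys.} It makes explicit the data the printed statement omits: the structure map $\omega\colon\sX/H_1\to G_2$ and an $H_1/H_2$-equivariant lift $\tilde\omega$ to $G_1$. This data is genuinely necessary, not a convenience.
\begin{itemize}
\item Take $\sX$ connected with $H_1\neq 0$ acting freely, and set $H_2=0$, $G_1=H_1$, $G_2=0$. Then $\sX/H_2=\sX$ is connected, while $(\sX/H_1)\times_{G_2}G_1$ is a disjoint union of copies of $\sX/H_1$ indexed by $H_1$.
\item The paper's set-theoretic identity already fails for a one-point $S$: take $H_1=G_1=\Z/2$ acting trivially and $H_2=G_2=0$.
\end{itemize}
Your construction also avoids two soft spots in the pointwise argument. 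Quotient sheaves are not computed by taking quotients of $R$-points. And pointwise bijections not induced by a global map do not assemble into a morphism of formal schemes.

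\textbf{What it costs.} The paper's argument is shorter only because all of this is left implicit. With your route, the real content moves into the application. There one must check that the Kottwitz-type map from $\RZ_{L_H}/X_*(Z_H)^{\Gamma}$ to $\pi_1(L_{H^{\ad}})^{\Gamma}$ exists and is equivariant for $X_*(Z_{L_H})^{\Gamma}$ acting through $X_*(Z_{L_H})^{\Gamma}/X_*(Z_H)^{\Gamma}$. You correctly flag this.

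\textbf{Your ``main obstacle'' is not one.} Equivariance gives $\tilde\omega(hy)=\bar h\,\tilde\omega(y)$, and translation on $G_1$ is free. So injectivity of $H_1/H_2\to G_1$ immediately separates the sheets. The stabilizer of a nonempty fibre of $\tilde\omega$ is $\ker(H_1\to G_1)=H_2$, with no appeal to freeness. In fact equivariance alone forces $H_1/H_2$ to act freely on $\sX/H_2$, permuting the clopen fibres of $\tilde\omega$. Your Step~2 needs nothing beyond ``a morphism of torsors under the same group over the same base is an isomorphism''. Freeness and proper discontinuity are used only to ensure the quotients are formal schemes.
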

\begin{proof}
    Let $G$ be a group scheme acting on the formal scheme $\sX$. 
    By \cite[\href{https://stacks.math.columbia.edu/tag/0AIR}{Tag 0AIR}]{stacks-project}, the fppf site of a formal scheme is subcanonical. This implies the \'etale site is subcanonical. So we may regard $\sX$, $H_i$, and $G_i$ ($i=1,2$) as \'etale sheaves over $\Spf W$.
    
    We make use of the following group-theoretic fact:
    Suppose there is a set $S$ with the action of (abstract) groups $H_1$ and $H_2$. If there is a short exact sequence of groups
\begin{align*}
    0 \longrightarrow H_1/H_2 \longrightarrow G_1 \longrightarrow G_2 \longrightarrow 0
\end{align*}
Then 
\begin{align*}
    (S/H_1)\x_{G_2} G_1 \cong S/H_2
\end{align*}

In particular, isomorphism $(\sX/H_1)\x_{G_2} G_1 \cong \sX/H_2$ holds since it holds over $R$-points for any $W$-algebra $R$. \end{proof}

\Cref{lem:formal-scheme-quotient} establishes the following projection:
\begin{align*}
    \RZ_{L_{G^{\ad}}} = \RZ_{L_{H^{\ad}}}  \cong \RZ_{(L_{H})^{\ad}} \x_{\pi_1((L_H)^{\ad})} \pi_1(L_{H^{\ad}})^{\Gamma} \xrightarrow{p_1} \RZ_{(L_{H})^{\ad}} = \RZ_{(L_{G})^{\ad}}.
\end{align*}
Let $\RZ_{P_{G^{\ad}}}: = \RZ_{P_H}/X_*(Z_H)^{\Gamma}$. This allows us to have the following diagram: \\
\begin{center}
    \begin{tikzcd}
\RZ_{L_H}^{\mathrm{rig}} \arrow[d, "/X_*(Z_H)^{\Gamma}"] & \RZ_{P_H}^{\mathrm{rig}} \arrow[l, "{\pi_{1,H}}"'] \arrow[d, "/X_*(Z_H)^{\Gamma}"] \arrow[rr, "{\pi_{2,H}}"] \arrow[rr, "\cong"']                               &                              & \RZ_H^{\mathrm{rig}} \arrow[ld] \\
\RZ_{L_{G^{\ad}}}^{\mathrm{rig}} \arrow[d, "p_1"']       & \RZ_{P_{G^{\ad}}}^{\mathrm{rig}} \arrow[r, "\cong"'] \arrow[ld, "{p_1 \circ \pi_{1, G^{\ad}}}"] \arrow[r, "{\pi_{2,G^{\ad}}}"] \arrow[l, "{\pi_{1, G^{\ad}}}"'] & \RZ_{G^{\ad}}^{\mathrm{rig}} &                                 \\
\RZ_{(L_{G})^{\ad}}^{\mathrm{rig}}                         &                                                                                                                                                                 &                              &                                
\end{tikzcd} \\

\end{center}

In particular, since $\pi_{1,H}$ is a fibration of balls, so is $\pi_{1,G^{\ad}}$. \\

Let $(K_H)^{\ad}$ be the image of $K_H$ under the map $H \to G^{\ad}$. Let $(K_H')^{\ad} = (K_H)^{\ad} \cap P_{G^{\ad}}(\Z_p)$. We use this construction to define
\begin{align*}
    \H^i(\RZ_{P_{G^{\ad}}}^{(K_H')^{\ad}}) := \H^i(\RZ_{P_{H}}^{K_H'}/X_*(Z_H)). \\
    \H^i(\RZ_{P_{G^{\ad}}}^\infty) := \varinjlim_{(K_{H}')^{\ad}}\H^i(\RZ_{P_{G^{\ad}}}^{(K_H')^{\ad}}).
\end{align*}

\begin{theorem}\label{thm:HV adjoint}
Let $(G, [b_G], \{\mu_G\})$ be an unramified non-basic local Shimura datum of abelian type, which is Hodge--Newton reducible with respect to a fixed parabolic subgroup $P_G$ and Levi factor $L_G$. Let $E^{\mathrm{opt}}$ be the finite extension of $E_G$ constructed in \Cref{thm: wdd-Shen}. Then for any admissible $\overline{\Q}_\ell$-representation $\rho$ of $J_{b_{G^{\ad}}}(\Q_p)$, we have an equality
    \begin{align*}
        \mathrm{H}^\bullet(\RZ^\infty_{G^{\ad}})_\rho = \mathrm{Ind}_{P_{G^{\ad}}(\Q_p)}^{G^{\ad}(\Q_p)}\mathrm{H}^\bullet(\RZ^\infty_{L_{G^{\ad}}})_\rho. 
    \end{align*}
of virtual representations of $G^{\ad}(\Q_p)\times \cW_{E^{\opt}}$.
\end{theorem}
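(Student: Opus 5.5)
The plan is to run the Mantovan--Hong argument directly on the adjoint level, using the diagram just constructed, which relates $\RZ_{L_{G^{\ad}}}^{\rig}$, $\RZ_{P_{G^{\ad}}}^{\rig}$ and $\RZ_{G^{\ad}}^{\rig}$. All three spaces are obtained from the Hodge type spaces $\RZ_{L_H}^{\rig}$, $\RZ_{P_H}^{\rig}$, $\RZ_H^{\rig}$ by taking the quotient by the discrete group $X_*(Z_H)^{\Gamma}$. This group acts freely, through the central part of $J_{b_H}(\Q_p)$, and the action is compatible with $\pi_{1,H}$ and $\pi_{2,H}$.

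\textbf{Step 1 (geometric input).} Descend the properties of \cite[Lemma 4.3.1]{HongPublished} through this quotient. Because $\pi_{2,H}$ is an $X_*(Z_H)^{\Gamma}$-equivariant isomorphism, the induced map $\pi_{2,G^{\ad}}$ is an isomorphism. As already noted after the diagram, $\pi_{1,G^{\ad}}$ is a fibration in balls. Using $\RZ_{G^{\ad}}\cong \RZ_H/X_*(Z_H)^{\Gamma}$ from \cite[Proposition 4.9]{ShenAbelianRZ} (this is where the surjectivity of $\pi_1(H)^\Gamma\to\pi_1(H^{\ad})^\Gamma$ enters), the same holds at every level $K$: the tower $\RZ_{P_{G^{\ad}}}^{(K'_H)^{\ad}}$ is identified with $\RZ_{P_H}^{K'_H}/X_*(Z_H)^{\Gamma}$, as in the definition preceding the theorem.

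\textbf{Step 2 (Levi versus parabolic).} Prove $\H^\bullet(\RZ^\infty_{L_{G^{\ad}}})_\rho\cong \H^\bullet(\RZ^\infty_{P_{G^{\ad}}})_\rho$ as $P_{G^{\ad}}(\Q_p)\times\cW_{E^{\opt}}$-representations, copying the proof of \Cref{prop: + level Levi and parabolic} and \Cref{cor: Levi and parabolic abelian}. The covers $\RZ^{(m)}_{P_H}$ and the spaces $\cP_{H,m}$, $\cP'_{H,m}$ are $X_*(Z_H)^{\Gamma}$-stable, so they descend to the quotient. The quasi-isomorphism of \cite[Prop 4.3.2]{HongPublished},
\[R\Gamma_c(\RZ_{P}^{K'_{(m)}}\otimes\C_p,\overline{\Q}_\ell)\cong R\Gamma_c(\RZ_{L}^{K'_{(m)}}\otimes\C_p,\overline{\Q}_\ell(-D))[-2D],\]
is then checked on the quotient. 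Since the quotient map is a disjoint-union covering with a free action, compactly supported cohomology of the quotient is computed from that of the cover by $X_*(Z_H)^{\Gamma}$-coinvariants. The quasi-isomorphism is equivariant for this group, so it descends. Weil-equivariance for $\cW_{E^{\opt}}$ follows because $\Phi^{\mathrm{Shen}}$ on the adjoint space is, by construction in \Cref{thm: wdd-Shen}, the quotient of the Hodge descent datum, and Hong's maps commute with the Hodge descent datum.

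\textbf{Step 3 (parabolic to $G^{\ad}$).} Because $\pi_{2,G^{\ad}}$ is an isomorphism, $\RZ^{K}_{G^{\ad}}$ for the principal level $K$ is a disjoint union of copies of $\RZ^{K'}_{P_{G^{\ad}}}$ indexed by $G^{\ad}(\Q_p)/P_{G^{\ad}}(\Q_p)K$. In the limit this gives $\H^i(\RZ^\infty_{G^{\ad}})=\mathrm{Ind}_{P_{G^{\ad}}(\Q_p)}^{G^{\ad}(\Q_p)}\H^i(\RZ^\infty_{P_{G^{\ad}}})$, which is compatible with the $J_{b_{G^{\ad}}}(\Q_p)$-action. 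Applying $\Ext^j_{J}(-,\rho)$ and taking the alternating sum, as in \cite[\S 4.4]{HongPublished}, and combining with Step 2, yields the theorem.

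\textbf{Main obstacle.} The hardest step is the descent in Step 2. Hong's level subgroups $K'_{H,(m)}$ must map onto cofinal subgroups $(K'_H)^{\ad}$ of $P_{G^{\ad}}(\Q_p)$. The $\Ext$ groups over $J_{b_H}(\Q_p)$ must also be compared with those over $J_{b_{G^{\ad}}}(\Q_p)$, which amounts to inflating $\rho$ along $J_{b_H}\to J_{b_{G^{\ad}}}$. I would first try to show that taking $X_*(Z_H)^{\Gamma}$-coinvariants is exact on these smooth representations. The quotient of $J_{b_H}(\Q_p)$ by the lattice $X_*(Z_H)^\Gamma$ differs from $J_{b_{G^{\ad}}}(\Q_p)$ only by the compact part of the centre. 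Given that exactness, the $\Ext$ groups agree by a Hochschild--Serre argument.
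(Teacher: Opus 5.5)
Your proposal follows essentially the same route as the paper. Both deduce the adjoint Levi--parabolic comparison by descending Hong's Hodge-type result \cite[Prop 4.3.2]{HongPublished} through the free quotient by $X_*(Z_H)^{\Gamma}$, and both obtain the induction formula by descending Hong's decomposition of $\RZ_H^{K_H}$ into copies of $\RZ_{P_H}$ \cite[Prop 4.3.3]{HongPublished} to $\RZ_{G^{\ad}}^{K_{G^{\ad}}}=\RZ_H^{K_H}/X_*(Z_H)^{\Gamma}$. The only differences are bookkeeping: you descend on $R\Gamma_c$ via coinvariants (legitimate, since $\RZ_H\to\RZ_H/X_*(Z_H)^{\Gamma}$ is a trivial disjoint-union covering) where the paper works with $R\Hom_J(-,\rho)$, and the paper makes the index set explicit via $K_H\backslash H(\Q_p)/P_H(\Q_p)\cong K_{G^{\ad}}\backslash G^{\ad}(\Q_p)/P_{G^{\ad}}(\Q_p)$, which your Step 3 uses implicitly.
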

\begin{proof}
By \cite[Proposition~4.3.2]{HongPublished}, there is an equality
\begin{align*}
    \mathrm{H}^\bullet(\RZ_{L_H}^\infty)_\rho = \mathrm{H}^\bullet(\RZ_{P_H}^\infty)_\rho.
\end{align*}
of virtual representations of $P_H(\Q_p)\times\cW_{E_H}$. Recall that by definition, $H^{\ad} = G^{\ad}$, $L_{H^{\ad}} = L_{G^{\ad}}$ and $P_{H^{\ad}} = P_{G^{\ad}}$.
Also note for any level $K$, $$R\Hom_{J_{b_G}(\Q_p)}(R\Gamma_c(\RZ_{L_H}^K/X_*(Z_H)^{\Gamma}),\rho)\simeq R\Hom_{J_{b_G}(\Q_p)}(R\Gamma_c(\RZ_{L_H}^K),\rho)_{hX_*(Z_H)^{\Gamma}}$$
Unwinding the definition of $\mathrm{H}^\bullet(\RZ_{L_{G^{\ad}}}^\infty)_\rho$, we then get
\begin{align*}
    \mathrm{H}^\bullet(\RZ_{L_{G^{\ad}}}^\infty)_\rho = \mathrm{H}^\bullet((\RZ_{L_{H}}/X_*(Z_H)^{\Gamma})^\infty)_\rho = \mathrm{H}^\bullet((\RZ_{P_{H}}/X_*(Z_H)^{\Gamma})^\infty)_\rho = \mathrm{H}^\bullet(\RZ_{P_{G^{\ad}}}^\infty)_\rho.
\end{align*}
Now, by \cite[Proposition~4.3.3]{HongPublished}, we know that :
\begin{align*}
\RZ_{H}^{K_H}= \bigsqcup_{K_H \backslash H(\Q_p) / P_H(\Q_p)} \RZ_{P_{H}}.
\end{align*}

Observe that by definition, $K_H/Z_H \cong K_G/Z_G$. Thus, by the third isomorphism theorem:
\begin{align*}
     K_{H}\backslash H(\Q_p)/P_H(\Q_p) \;&\cong \;(K_{H}/Z_H)\backslash H^{\ad}(\Q_p)/P_{H^{\ad}}(\Q_p) \\
    &\cong \; (K_{G}/Z_G)\backslash G^{\ad}(\Q_p)/P_{G^{\ad}}(\Q_p) \\
    &\cong \; K_{G^{\ad}}\backslash G^{ad}(\Q_p)/P_{G^{\ad}}(\Q_p).
\end{align*}

We use this in the following manner:
\begin{align*}
    \RZ_{G^{\ad}}^{K_{G^{ad}}} =  \RZ_H^{K_H}/X_*(Z_H)^\Gamma &= \bigsqcup_{K_H \backslash H(\Q_p) / P_H(\Q_p)} \RZ_{P_{H}}^{K_H}/X_*(Z_H)^\Gamma \\ &\cong \; \bigsqcup_{K_{G^{\ad}}\backslash G^{\ad}(\Q_p)/P_{G^{\ad}}(\Q_p)} \RZ_{P_{G^{\ad}}}^{K_{G^{ad}}}.
\end{align*}

It follows that
\begin{align*}
    \H^\bullet(\RZ_{G^{\ad}}) = \mathrm{Ind}_{P_{G^{\ad}}(\Q_p)}^{G^{\ad}(\Q_p)}\mathrm{H}^\bullet(\RZ^\infty_{P_{G^{\ad}}})_\rho.
\end{align*}

\end{proof}

\begin{convention}\label{conv: associated-open-compacts}
Let $(G,[b_G],\{\mu_G\})$ be an unramified local Shimura datum of abelian type and $(H,[b_H],\{\mu_H\})$ an associated Hodge type. We fix an isomorphism $$(G^{\ad}, [b_{G^{\ad}}], \{\mu_{G^{\ad}}\}) \xrightarrow{\sim} (H^{\ad}, [b_{H^{\ad}}], \{\mu_{H^{\ad}}\})$$ of local Shimura data. For a compact open $K_G\subset G(\Q_p)$, we define its \textit{associated} subgroup $K_H$ of $H(\Q_p)$ in the following way: Let $K_{H^\ad}$ be the image of $K_G$ under the composition $$G(\Q_p)\to G^{\ad}(\Q_p)\xrightarrow{\sim} H^{\ad}(\Q_p).$$Then define $K_H$ as the preimage of $K_{H^{\ad}}$ along the map $H(\Q_p)\to H^{\ad}(\Q_p)$.
\end{convention}

We combine our prior results to deduce the main theorem \Cref{thm: HV full abelian} for the abelian case. 

\begin{proof}[Proof of \Cref{thm: HV full abelian}]
Let $K_G \subset G(\Z_p)$ and $K_H \subset H(\Z_p)$ be associated compact open subgroups as defined in \Cref{conv: associated-open-compacts}. Recall that we have the decomposition
\begin{align*}
    \RZ_H^{K_{H}} = \bigsqcup_{K_{H}\backslash H(\Q_p)/P_H(\Q_p)}\RZ_{P_H}^{K_{H}}
\end{align*}
Restricting it to the $+$-components, we have
\begin{align*}
    \RZ_G^{K_{G},+}:=\RZ_H^{K_{H},+} = \bigsqcup_{K_{H}\backslash H(\Q_p)/P_H(\Q_p)}\RZ_{P_H}^{K_{H},+}
\end{align*}

Since $K_H/(Z_H\cap K_H) \cong K_G/(Z_G\cap K_G)$ by definition, we have the following chain of isomorphisms:
\begin{align*}
    K_{H}\backslash H(\Q_p)/P_H(\Q_p) &\cong \;(K_{H}/Z_H\cap K_H)\backslash H^{\ad}(\Q_p)/P_{H^{\ad}}(\Q_p) \\
    &\cong \; (K_{G}/Z_G\cap K_G)\backslash G^{\ad}(\Q_p)/P_{G^{\ad}}(\Q_p)\\  &\cong \; K_{G}\backslash G(\Q_p)/P_G(\Q_p)
\end{align*}
Hence we can write 
\begin{align*}
    \RZ_G^{K_{G},+} = \bigsqcup_{K_{G}\backslash G(\Q_p)/P_G(\Q_p)}\RZ_{P_H}^{K_{G},+}
\end{align*}
It follows that
\begin{align*}
    \RZ_G^{K_G} \cong J_{b_G}(\Q_p)\RZ_G^{K_G,+}  &= J_{b_G}(\Q_p)\bigsqcup_{K_G\backslash G(\Q_p)/P_G(\Q_p)}\RZ_{P_G}^{K_{G},+} \\
    &= \bigsqcup_{J_{b_G}(\Q_p)/J_{b_G}(\Q_p)^+} \bigsqcup_{K_G\backslash G(\Q_p)/P_G(\Q_p)}\RZ_{P_G}^{K_{G},+}
\end{align*}
Re-ordering the last term gives
\begin{align*}
    \RZ_G^{K_G} = \bigsqcup_{K_G\backslash G(\Q_p)/P_G(\Q_p)}\bigsqcup_{J_{b_G}(\Q_p)/J_{b_G}(\Q_p)^+}\RZ_{P_G}^{K_{G},+} =\bigsqcup_{K_G\backslash G(\Q_p)/P_G(\Q_p)} \RZ_{P_G}^{K_G},
\end{align*}
which, by a similar argument as in the proof of \cite[Prop 4.3.3]{HongPublished}, then implies:
\begin{align*}
     \mathrm{H}^\bullet(\RZ_{G}^\infty)_\rho = \mathrm{Ind}_{P_{G}(\Q_p)}^{G(\Q_p)} \mathrm{H}^\bullet(\RZ_{P_{G}}^\infty)_\rho
\end{align*}
Let $L = L_G$ and $P=P_G$. Combining this identity with \Cref{cor: Levi and parabolic abelian} gives the desired result:
\begin{align*}
    \mathrm{H}^\bullet(\RZ_{G}^\infty)_\rho = \mathrm{Ind}_{P(\Q_p)}^{G(\Q_p)} \mathrm{H}^\bullet(\RZ_{L}^\infty)_\rho
\end{align*}
\end{proof}

\bibliographystyle{amsalpha}
\bibliography{HarrisViehmann}

\end{document}